\numberwithin{equation}{section}
\title{\bf A General Maximum Principle for Progressive Optimal Control of Fully Coupled Forward-Backward Stochastic Systems with Jumps \thanks{This work is supported by National Natural Science Foundations of China (12471419, 12271304), and Shandong Provincial Natural Science Foundations (ZR2024ZD35, ZR2022JQ01).}}
\author{\normalsize Bin Wang\thanks{\it School of Mathematics, Shandong University, Jinan 250100, P.R. China, E-mail: 202112005@mail.sdu.edu.cn} ,\quad Yu Si\thanks{\it School of Mathematics, Shandong University, Jinan 250100, P.R. China, E-mail: 202112003@mail.sdu.edu.cn} ,\quad Jingtao Shi\thanks{Corresponding author, \it School of Mathematics, Shandong University, Jinan 250100, P.R. China, E-mail: shijingtao@sdu.edu.cn}}
\date{}
\newtheorem{mypro}{Proposition}[section]
\newtheorem{mythm}{Theorem}[section]
\newtheorem{mylem}{Lemma}[section]
\newtheorem{myrem}{Remark}[section]
\begin{document}

\maketitle

\noindent{\bf Abstract:}\quad This paper is concerned with a general maximum principle for the fully coupled forward-backward stochastic optimal control problem with jumps, where the control domain is not necessarily convex, within the progressively measurable framework. A distinct feature in this paper is that the solution $Z$ of BSDEPs could include the variable ``$e$'', further, the diffusion term of BSDEPs takes the form $\int_{\mathcal{E}}Z_{(t,e)}\nu(d e)d W_t$ rather than the conventional $Z_t dW_t$, reflecting the essential coupling between the solution component $Z$ and the Polish space $\mathcal{E}$.

\vspace{2mm}

\noindent{\bf Keywords:}\quad General maximum principle, fully coupled forward-backward stochastic differential equation with jumps, progressive optimal control, recursive utility

\section{\bf Introduction}

Pardoux and Peng are the pioneers to introduce the concept of a nonlinear {\it backward stochastic differential equation} (BSDE) in their paper \cite{PardouxPeng1990}. And then Peng \cite{Peng1990} obtained the general maximum principle for the stochastic optimal control problem. Here  the word {\it general} means that the control domain is not necessarily convex and the diffusion term is control dependent. Duffie and Epstein \cite{Duffie1992} pioneered of recursive utilities in continuous time, which is a specific type of BSDE. Recursive utility is an extension of the standard additive utility, where the instantaneous utility not only depend on the instantaneous consumption rate but also on the future utility. The {\it forward-backward stochastic differential equation} (FBSDE) was first introduced by Antonelli \cite{Antonelli1993}, and the problem of stochastic recursive optimal control is a type of forward-backward optimal control problems. By new variational technique, Hu \cite{Hu2017} completely solves Peng's open problem in \cite{Peng1998} and get a general maximum principle for the stochastic recursive optimal control problem. Furthermore, Hu et al. \cite{HuJiXue2018} extended it to the system governed by a fully coupled FBSDE. Afterwards, Lin and Shi \cite{LinShi2023} generalized the cost functional of this problem to a general form, and then, they \cite{LinShi2025} also study a kind of risk-sensitive optimal control problem for fully coupled forward-backward stochastic systems.

Due to their wide applications in finance and economics, many scholars have done research for the optimal control problem of stochastic systems with Poisson jumps. Situ \cite{Situ1991} first obtained the general maximum principle for a system described by a {\it stochastic differential equation with Poisson jumps} (SDEP), but the jump coefficient is control variable independent. Tang and Li \cite{TangLi1994} proved the general maximum principle of controlled SDEPs, where the control variable enters all coefficients. Later, Song et al. \cite{SongTangWu2020} introduced a novel spike variation technique and derived the general maximum principle for SDEPs under the progressively measurable framework. Zheng and Shi \cite{ZhengShi2023} obtained a general maximum principle for partially observed progressive optimal control problem of {\it forward-backward stochastic differential equations with Poisson jumps} (FBSDEP). Recently, another version of the general maximum principle of FBSDEPs for the case of complete observation is obtained in Wang and Shi \cite{WangShi2024}.

In previous studies, many scholars have obtained a series of important results about general maximum principles of FBSDEs and FBSDEPs, by applying the Ekeland variational principle. For the optimal control problems without jumps, Yong \cite{Yong2010} studied a fully coupled FBSDE with mixed initial-terminal conditions, and Wu \cite{Wu2013} studied a recursive utility of FBSDEs. For the ones with jumps, Shi \cite{Shi2012} researched an FBSDEP system, with control independent diffusion and jump coefficients, and a fully coupled FBSDEP system was investigated by Shi \cite{Shi2012-}. However, a well-known shortcoming for utilizing the Ekeland variational principle is that some unknown parameters are unavoidably introduced, which are somewhat very difficult to determine.

Motivated by the above mentioned literature, in this paper, we study an optimal control problem of a fully coupled FBSDEP with recursive utility, where the control domain is not necessarily convex and the control variable enters all coefficients. The target is to obtain a general maximum principle. The distinction of this article lies in the fact that we do not use the Ekeland variational principle. Recently, Yang and Moon \cite{YangMoon2023} also studied this type of problems, without using the maximum principle, but instead proposed sufficient conditions by the dynamic programming principle.

As is well known, when exploring the optimal control problem with non-convex control domains, spike variational technique and high order $L^p$-estimate are necessary mathematical tools. For the $L^p$-estimate and existence and uniqueness of solutions to (F)BSDE(P)s, there are numerous theoretical achievements. Antonelli \cite{Antonelli1993} investigated the existence and uniqueness results of fully coupled FBSDEs within a sufficiently small terminal time $T$. Pardoux and Tang \cite{PardouxTang1999} extended these results based on a fixed-point approach. Using a decoupling random field method, Ma et al. \cite{MaProtterYong1994} introduced the well-known Four Step Scheme for the solvability of fully coupled FBSDEs. Hu and Peng \cite{HuPeng1995} and Peng and Wu \cite{PengWu1999} got the results under monotonicity conditions. For the problem with jumps, Tang and Li \cite{TangLi1994} studied the existence and uniqueness of $L^2$-solution for BSDEPs (see also Barles et al.\cite{BBP1997}). Quenez and Sulem \cite{QS2013} studied some properties of linear BSDEPs and gave an $L^p$-solution and $L^2$-estimate to such BSDEPs with general generators. In addition, Wu \cite{Wu1999} proved the existence and uniqueness results of $L^2$-solution for fully coupled FBSDEPs under monotonicity conditions. Li and Wei \cite{LiWei2014} proved the $L^2$-estimate for fully coupled FBSDEPs for any terminal time $T$ and the $L^p$-estimate within sufficiently small terminal time $T$ under monotonicity conditions. Without the monotonicity conditions, \cite{LiWei2014} proved the uniqueness $L^2$-solution for fully coupled FBSDEPs for small durations. Rencently, there are also some scholars who have conducted related research in this field. Xie and Yu \cite{XieYu2020} obtained an $L^p$-solution and its related $L^p$-estimate for coupled FBSDEs with random coefficients on small durations under the Lipschitz condition and monotonicity assumption. And then they \cite{XieYu2023} studied a coupled coupled linear FBSDE and get the $L^p$-result in the monotonicity framework over large time intervals. Meng and Yang \cite{MengYang2023} gave a positive result for the open problem proposed by Yong \cite{Yong2020}, which the unique $L^2$-solution of fully coupled FBSDEs is an $L^p$-solution under the usual Lipschitz (Lipschitz constant of Z in diffusion term should be sufficiently small). Zheng and Shi \cite{ZhengShi2023} also studied a problem and got an $L^p$-solution and $L^p$-estimate of the fully coupled FBSDEPs with the assumption that the jump diffusion term is independent of $(Z,\tilde{Z})$.

Inspired by the work of \cite{HuJiXue2018}, as well as the innovative spike variation technique introduced by \cite{SongTangWu2020} to subtract the jump part, and the novel $L^p$-estimate for decoupled FBSDEPs derived by Zheng and Shi \cite{ZhengShiarXiv2023}, in this paper, we successfully obtain the $L^p$-estimate of fully coupled FBSDEPs which is different from \cite{ZhengShiarXiv2023} and extend the general maximum principle to the framework of fully coupled FBSDEPs within the progressive measurable framework. This advancement signifies the expansion of our theoretical framework to handle complex systems involving both continuous variations and abrupt jumps, thereby enhancing its practical applicability. To our best knowledge, this is the first time that the solution $Z$ of BSDEPs incorporates the variable ``$e$'', which sets our work apart from the previous research. 

The structure of the remaining sections of this paper is outlined as follows. In Section 2, we formally state our problem and provide an $L^p$-estimate of FBSDEPs. Section 3 constitutes the pivotal content of this article, where we derive a general maximum principle. Finally, in Section 4, some concluding remarks are given. Some proofs are postponed to the Appendix.

{\it Notations.}\quad In this paper, $\mathbf{R}^n$ denotes the $n$-dimensional Euclidean space with norm $|\cdot|$, $D g, D^2 g$ denotes the gradient and the Hessian matrix of the differentiable function $g$, respectively, $\top$ appearing as a superscript denotes the transpose of a matrix, and $C>0$ denotes a generic constant which may take different values in different places.

\section{\bf Problem formulation and preliminaries}

\subsection{Problem formulation}

Let $T>0$ be fixed and $\mathbf{U} \subset \mathbf{R}^k$ be nonempty. Let $(\Omega,\mathcal{F},P)$ be a complete probability space and $(\mathcal{E}, \mathscr{B}(\mathcal{E}))$ be a Polish space with a $\sigma$-finite measure $\nu(\cdot)$ on $\mathcal{E}$. $\{W_t\}_{t\geqslant 0}$ is a one-dimensional standard Brownian motion defined on $(\Omega,\mathcal{F},P)$ over $[0,T]$ (with $W_0=0$, $P\text{-a.s.})$ and $\{N(d e, d t)\}_{t\geqslant 0}$ is a Poisson random measure on $\left(\mathbf{R}^+ \times \mathcal{E}, \mathscr{B}\left(\mathbf{R}^+ \right)\times \mathscr{B}\left(\mathcal{E}\right)\right)$ on $(\Omega,\mathcal{F},P)$ over $[0,T]$ and for any $E \in \mathscr{B}\left(\mathcal{E}\right)$, $\nu\left(E\right)<\infty$, then the compensated Poisson random measure is defined by $\tilde{N}(d e, d t)\triangleq N(d e, d t)-\nu(d e) d t$. The filtration $\left\{\mathcal{F}_t;0\leqslant t\leqslant T\right\}$ is generated as the following $\mathcal{F}_t\triangleq\mathcal{F}_t^{W} \vee \mathcal{F}_t^{N} \vee \mathcal{N}$, where $W, N$ are mutually independent under $P$, $\mathcal{F}_t^{W}, \mathcal{F}_t^{N}$ are the $P$-completed natural filtrations generated by $W, N$ respectively, and $\mathcal{N}$ denotes the totality of $P$-null sets. Let $\mathbb{E}$ denotes the expectation under the probability measure $P$.

We denote by $L_{\mathcal{F}}^p([0,T];\mathbf{R})$ the space of all $\mathbf{R}$-valued $\mathcal{F}_t$-adapted processes $\phi_\cdot$ such that $\mathbb{E} \big[\sup_{0\leqslant t\leqslant T}|\phi_t|^p\big] <\infty$, by $L_{\mathcal{F}}^{2,p}([0,T]\times\mathcal{E};\mathbf{R})$ the space of all $\mathbf{R}$-valued $\mathcal{F}_t$-predictable processes $\varphi_{(\cdot,\cdot)}$ such that $\mathbb{E}\big(\int_0^T\int_{\mathcal{E}} |\varphi_{(t,e)}|^2 \nu(d e)d t\big)^{\frac{p}{2}}<\infty$, by $F^{2,p}([0,T]\times\mathcal{E};\mathbf{R})$ the space of all $\mathbf{R}$-valued $\mathcal{F}_t$-predictable processes $\tilde{\varphi}_{(\cdot,\cdot)}$ such that $\mathbb{E}\big(\int_0^T \int_{\mathcal{E}} |\tilde{\varphi}_{(t,e)}|^2 N(d e,d t)\big)^{\frac{p}{2}}<\infty$, and by $\mathcal{L}^2(\mathcal{E},\mathcal{B}(\mathcal{E}),\nu;\mathbf{R})$ or $\mathcal{L}^2$ the set of square-integrable functions $f:\mathcal{E}\rightarrow\mathbf{R}$ such that $\left\|f\right\|^2\triangleq\int_{\mathcal{E}}|f_e|^2\nu(de)<\infty$.

For simplicity, we denote $\mathscr{M}^p[0,T]\triangleq L_{\mathcal{F}}^p([0,T];\mathbf{R})\times L_{\mathcal{F}}^p([0,T];\mathbf{R})\times L_{\mathcal{F}}^{2,p}([0,T]\times\mathcal{E};\mathbf{R})\times F^{2,p}([0,T]\times\mathcal{E};\mathbf{R})$ and $\mathscr{N}^p[0,T]\triangleq L_{\mathcal{F}}^p([0,T];\mathbf{R})\times L_{\mathcal{F}}^{2,p}([0,T];\mathbf{R})\times F^{2,p}([0,T]\times\mathcal{E};\mathbf{R})$.

We denote by $\mathcal{U}[0,T]$ the set of all admissible controls $u:[0,T]\times\Omega\rightarrow\mathbf{U}$ which is an $\{\mathcal{F}_t\}_{t\geqslant 0}$-progressively measurable process on $(\Omega,\mathcal{F},P)$ such that $\mathbb{E}\big[\sup_{0\leqslant t\leqslant T}|u_t|^p\big] <\infty, p>1$.

Consider the following fully coupled forward-backward stochastic control system with jumps:
\begin{equation}\label{fully coupled FBSDEP}
\left\{\begin{aligned}
dX_t&= b\big(t,X_t,Y_t,Z_{(t,e)},\tilde{Z}_{(t,e)},u_t \big)d t +\sigma\big(t,X_t,Y_t,Z_{(t,e)},\tilde{Z}_{(t,e)},u_t \big)d W_t\\
&\quad +\int_{\mathcal{E}}f\big(t,X_{t-},Y_{t-},Z_{(t,e)},\tilde{Z}_{(t,e)},u_t,e\big)\tilde{N}(d e,d t),\\
-dY_t&= g\big(t,X_t,Y_t,Z_{(t,e)},\tilde{Z}_{(t,e)},u_t \big)d t -\int_{\mathcal{E}}Z_{(t,e)}\nu(d e)d W_t-\int_{\mathcal{E}}\tilde{Z}_{(t,e)}\tilde{N}(d e,d t),\\
X_0&= x_0,\quad Y_T=\phi(X_T),
\end{aligned}\right.
\end{equation}
where $b,\sigma,g:[0,T]\times\mathbf{R}\times\mathbf{R}\times\mathcal{L}^2\times\mathcal{L}^2\times\mathbf{U}\rightarrow\mathbf{R}$ , $f:[0,T]\times\mathbf{R}\times\mathbf{R}\times\mathcal{L}^2\times\mathcal{L}^2\times\mathbf{U}\times\mathcal{E}\rightarrow\mathbf{R}$ and $\phi:\mathbf{R}\rightarrow\mathbf{R}$ are given functions.

\begin{myrem}
Our state equation is different from others by the variable $``e"$ enter into $Z_{(t,e)}$, one of ``martingale terms" of this BSDEP. The reason can be seen from the form of $K_1(t,e)$ that follows \eqref{explicit solution of K1K2}, where $K_1(t,e)$ is a part of the first-order variation of $Z_{(t,e)}$, arising basically from the fully coupled structure of forward-backward equations. In addition, for $\phi = b,\sigma,g$, $\phi \big(t,X_t,Y_t,Z_{(t,e)},\tilde{Z}_{(t,e)},u_t \big)$ in $\eqref{fully coupled FBSDEP}$ should be written as $\phi \big(t,X_t,Y_t,\int_{\mathcal{E}} Z_{(t,e)} \nu (d e),\int_{\mathcal{E}} \tilde{Z}_{(t,e)} \nu (d e),u_t \big)$. For convenience, we write it as $\eqref{fully coupled FBSDEP}$.
\end{myrem}

Given $u_\cdot\in\mathcal{U}[0,T]$, we introduce the cost functional $J(u_\cdot)=Y_0$, and our stochastic recursive optimal control problem with jumps is the following.

\noindent{\bf Problem (SROCPJ).} Minimize the cost functional $J(u_\cdot)$ over $\mathcal{U}[0,T]$:
\begin{equation}
\inf\limits_{u_\cdot\in\, \mathcal{U}[0,T]}J(u_\cdot).
\end{equation}

\subsection{$L^p$-estimate of fully coupled FBSDEPs}

We give the $L^p$-estimate of decoupled FBSDEPs in the first, which are the generalization of Theorem 3.17, Theorem 5.17 in Pardoux and Ra\c{s}canu \cite{PardouxRasanu2014} and Lemma A.1 in Hu et al. \cite{HuJiXue2018} to Possion jumps.
\begin{mylem}\label{lemma 2.1}
For any stochastic processes $\left(y_\cdot,z_{(\cdot,\cdot)},\tilde{z}_{(\cdot,\cdot)}\right)\in \mathscr{N}^p[0,T]$, consider the following FBSDEP:
\begin{equation}\label{decoupled FBSDEPs}
\left\{\begin{aligned}
dX_t=&\ b\left(t,X_t,y_t,z_{(t,e)},\tilde{z}_{(t,e)} \right)d t + \sigma\left(t,X_t,y_t,z_{(t,e)},\tilde{z}_{(t,e)} \right) d W_t\\
& +\int_{\mathcal{E}}f\left(t,X_{t-},y_{t-},z_{(t,e)},\tilde{z}_{(t,e)},e\right)\tilde{N}(d e,d t),\\
-dY_t=&\ g\big(t,X_t,Y_t,Z_{(t,e)},\tilde{Z}_{(t,e)} \big) d t -\int_{\mathcal{E}}Z_{(t,e)}\nu(d e)d W_t-\int_{\mathcal{E}}\tilde{Z}_{(t,e)}\tilde{N}(d e,d t),\\
X_0=&\ x_0,\quad Y_T=\phi(X_T),
\end{aligned}\right.
\end{equation}
where $b,\sigma,g:[0,T]\times\mathbf{R}\times\mathbf{R}\times\mathcal{L}^2\times\mathcal{L}^2\times\mathbf{U}\rightarrow\mathbf{R}$ , $f:[0,T]\times\mathbf{R}\times\mathbf{R}\times\mathcal{L}^2\times\mathcal{L}^2\times\mathbf{U}\times\mathcal{E}\rightarrow\mathbf{R}$ and $\phi:\mathbf{R}\rightarrow\mathbf{R}$. If the coefficients satisfy:

$(1)$ $b,\sigma,g$ are $\mathscr{G}\otimes\mathscr{B}(\mathbf{R})\otimes\mathscr{B}(\mathbf{R})\otimes\mathscr{B}(\mathbf{R})\otimes\mathscr{B}(\mathbf{R})/\mathscr{B}(\mathbf{R})$ measurable, $f$ is $\mathscr{G}\otimes\mathscr{B}(\mathbf{R})\otimes\mathscr{B}(\mathbf{R})\otimes\mathscr{B}(\mathbf{R})\otimes\mathscr{B}(\mathbf{R})\otimes\mathscr{B}(\mathcal{E})/\mathscr{B}(\mathbf{R})$ measurable and $\phi$ is $\mathscr{F}_T\otimes\mathscr{B}(\mathbf{R})/\mathscr{B}(\mathbf{R})$ measurable, where $\mathscr{G}(\mathscr{P})$ is the progressive (predictable) $\sigma$-field on $[0,T]\times\Omega;$

$(2)$ For $p\geqslant 2,t\in[0,T],e\in\mathcal{E}$, we have
\begin{equation*}
\begin{aligned}
&\mathbb{E}\Bigg\{\big|\phi(0)\big|^p+\left(\int_0^T \big|b(t,0,y_t,z_{(t,e)},\tilde{z}_{(t,e)})\big|d t\right)^p+\left(\int_0^T\big|g(t,0,0,0,0)\big|d t\right)^p \\
&\ +\left(\int_0^T\big|\sigma(t,0,y_t,z_{(t,e)},\tilde{z}_{(t,e)})\big|^2 d t\right)^{\frac{p}{2}}
 +\left(\int_0^T\int_{\mathcal{E}}\big|f(t,0,y_t,z_{(t,e)},\tilde{z}_{(t,e)},e)\big|^2 N(d e,d t)\right)^{\frac{p}{2}}\Bigg\}<\infty;
\end{aligned}
\end{equation*}

$(3)$ For $\psi=b,\sigma$,
\begin{equation*}
\begin{array}{c}
\big|\psi\left(t,x,y,z,\tilde{z}\right)-\psi\left(t,\hat{x},y,z,\tilde{z}\right)\big|\leqslant L_1\big|x-\hat{x}\big|, \\[2ex]
\big|g\big(t,x,y,z,\tilde{z}\big)-g\big(t,\hat{x},\hat{y},\hat{z},\hat{\tilde{z}}\big)\big|\leqslant
L_1\big|x-\hat{x}\big|+L_2 \left(\big|y-\hat{y}\big|+\big\|z-\hat{z}\big\|+\big\|\tilde{z}-\hat{\tilde{z}}\big\|\right),\\[2ex]
\big\|f\left(t,x,y,z,\tilde{z},e\right)-f\left(t,\hat{x},y,z,\tilde{z},e\right)\big\|\leqslant L_1\big|x-\hat{x}\big|, \\
\end{array}
\end{equation*}
for all $t \in [0,T], x, \hat{x} \in \mathbf{R}, y, \hat{y} \in \mathbf{R}, z, \hat{z} \in \mathcal{L}^2, \tilde{z}, \hat{\tilde{z}} \in  \mathcal{L}^2, e \in \mathcal{E}$.

Then \eqref{decoupled FBSDEPs} has a unique solution $\big(X_\cdot,Y_\cdot,Z_{(\cdot,\cdot)},\tilde{Z}_{(\cdot,\cdot)}\big)\in \mathscr{M}^p[0,T]$ and there exists a constant $C_p>0$ which only depends on $p,T,L_1,L_2$ such that
\begin{equation*}
\begin{aligned}
&\mathbb{E}\Bigg\{\sup\limits_{0\leqslant t\leqslant T}\left[\big|X_t\big|^p+\big|Y_t\big|^p\right]+\left(\int_0^T\big\|Z_{(t,e)}\big\|^2 d t\right)^{\frac{p}{2}}
 +\left(\int_0^T\int_{\mathcal{E}}\big|\tilde{Z}_{(t,e)}\big|^2N(d e,d t)\right)^{\frac{p}{2}}\Bigg\}\\
\leqslant&\ C_p\mathbb{E}\Bigg\{\big|\phi(0)\big|^p+\big|x_0\big|^p+\left(\int_0^T \big|b(t,0,y_t,z_{(t,e)},\tilde{z}_{(t,e)})\big|d t\right)^p+\left(\int_0^T\big|g(t,0,0,0,0)\big|d t\right)^p \\
&+\left(\int_0^T\big|\sigma(t,0,y_t,z_{(t,e)},\tilde{z}_{(t,e)})\big|^2 d t\right)^{\frac{p}{2}}+\left(\int_0^T\int_{\mathcal{E}}\big|f(t,0,y_t,z_{(t,e)},\tilde{z}_{(t,e)},e)\big|^2 N(d e,d t)\right)^{\frac{p}{2}}\Bigg\}.
\end{aligned}
\end{equation*}
\end{mylem}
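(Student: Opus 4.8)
The plan is to exploit the decoupled structure of \eqref{decoupled FBSDEPs}: the forward state $X_\cdot$ does not feed back on the backward unknowns $(Y_\cdot,Z_{(\cdot,\cdot)},\tilde Z_{(\cdot,\cdot)})$, it only sees the frozen processes $(y_\cdot,z_{(\cdot,\cdot)},\tilde z_{(\cdot,\cdot)})\in\mathscr N^p[0,T]$. Hence one can first solve the forward SDEP for $X_\cdot$ alone, then freeze $X_\cdot$ and solve the resulting BSDEP for $(Y_\cdot,Z_{(\cdot,\cdot)},\tilde Z_{(\cdot,\cdot)})$, and finally chain the two $L^p$-estimates. This reduces the lemma to the jump analogues of Theorem~3.17 ($L^p$-theory for SDEPs) and Theorem~5.17 ($L^p$-theory for BSDEPs) of \cite{PardouxRasanu2014}, assembled in the manner of Lemma~A.1 of \cite{HuJiXue2018} and of the decoupled-FBSDEP estimate in \cite{ZhengShiarXiv2023}.

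First I would treat the forward equation. By Remark~2.1 the maps $X\mapsto\int_{\mathcal E}b(t,X,y_t,z_{(t,e)},\tilde z_{(t,e)},e)\nu(de)$ and $X\mapsto\int_{\mathcal E}\sigma(\cdot)\nu(de)$, as well as $f$, are globally Lipschitz in $X$ with constant $L_1$ by hypothesis~(3), so the forward part of \eqref{decoupled FBSDEPs} is an SDEP with coefficients Lipschitz in the state only. A Picard iteration (equivalently a contraction) in $L^p_{\mathcal F}([0,T];\mathbf R)$ gives a unique $X_\cdot$ for arbitrary $T$; applying It\^o's formula to $(\varepsilon+|X_t|^2)^{p/2}$ and letting $\varepsilon\downarrow0$, using the Burkholder--Davis--Gundy and Kunita inequalities for the Brownian and Poisson martingale parts, peeling off the linear-in-$X$ contributions via~(3), and closing with Gronwall's lemma, bounds $\mathbb E\sup_{0\le t\le T}|X_t|^p$ by $C_p$ times the $|x_0|^p$, $b$, $\sigma$ and $f$ terms appearing on the right-hand side of the asserted inequality, with $C_p=C_p(p,T,L_1)$.

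Next, with $X_\cdot$ now a fixed process, the datum $\phi(X_T)$ lies in $L^p(\Omega,\mathcal F_T;\mathbf R)$ (using the Lipschitz property of $\phi$, $|\phi(X_T)|^p\le C(|\phi(0)|^p+|X_T|^p)$), and the generator $(y',z',\tilde z')\mapsto\int_{\mathcal E}g(t,X_t,y',z',\tilde z',e)\nu(de)$ is Lipschitz in $(y',z',\tilde z')$ with constant controlled by $L_2$. The $L^p$-theory for such BSDEPs then yields a unique $(Y_\cdot,Z_{(\cdot,\cdot)},\tilde Z_{(\cdot,\cdot)})\in L^p_{\mathcal F}([0,T];\mathbf R)\times L^{2,p}_{\mathcal F}([0,T]\times\mathcal E;\mathbf R)\times F^{2,p}([0,T]\times\mathcal E;\mathbf R)$, with $\mathbb E\{\sup_t|Y_t|^p+(\int_0^T\|Z_{(t,e)}\|^2dt)^{p/2}+(\int_0^T\int_{\mathcal E}|\tilde Z_{(t,e)}|^2N(de,dt))^{p/2}\}$ bounded by $C_p\mathbb E\{|\phi(X_T)|^p+(\int_0^T\|g(t,X_t,0,0,0,e)\|dt)^p\}$. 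Finally I would chain: using $\|g(t,X_t,0,0,0,e)\|\le\|g(t,0,0,0,0,e)\|+L_1|X_t|$, hence $(\int_0^T\|g(t,X_t,0,0,0,e)\|dt)^p\le C(\int_0^T\|g(t,0,0,0,0,e)\|dt)^p+C\sup_t|X_t|^p$, together with the estimate for $|\phi(X_T)|^p$ and Step~1's bound on $\mathbb E\sup_t|X_t|^p$, and adding the forward and backward displays, produces the claimed inequality with $C_p$ depending only on $p,T,L_1,L_2$; membership in $\mathscr M^p[0,T]$ follows because the right-hand side is finite under hypothesis~(2).

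The step I expect to be the main obstacle is the BSDEP $L^p$-estimate, and in particular the jump contribution measured in the $F^{2,p}$-norm built on the \emph{uncompensated} measure $N(de,dt)$ rather than on $\nu(de)dt$. Applying It\^o's formula to $|Y_t|^p$ produces, besides a compensated martingale, a finite-variation term $\int_0^t\int_{\mathcal E}[\,|Y_{s-}+\tilde Z_{(s,e)}|^p-|Y_{s-}|^p-p|Y_{s-}|^{p-2}Y_{s-}\tilde Z_{(s,e)}\,]\nu(de)ds$, from which one must extract a coercive quantity comparable, uniformly in $p\ge2$, to $\int_0^T\int_{\mathcal E}|\tilde Z_{(s,e)}|^2N(de,ds)$; this requires the pointwise convexity inequalities for $x\mapsto|x|^p$ together with a passage between the $\nu(de)ds$- and $N(de,ds)$-based quantities via Kunita's inequality, all while keeping the constants dependent only on $p,T,L_1,L_2$. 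The remaining bookkeeping is the routine jump adaptation of the arguments in \cite{PardouxRasanu2014} and \cite{HuJiXue2018}.
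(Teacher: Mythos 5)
Your proposal is correct and follows essentially the same route as the paper: the paper's proof is a two-line citation that solves the forward SDEP first (Theorem A.2 of Song--Wu) and then the backward BSDEP with $X_\cdot$ frozen (Lemmas 2.2--2.3 of Zheng--Shi), exactly the decomposition and chaining of $L^p$-estimates you describe, with your sketch simply filling in what those cited results contain (including the $F^{2,p}$-norm issue for the uncompensated jump measure, which is indeed where the cited lemmas do the work).
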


\begin{proof}
We can obtain the result immediately from Theorem A.2 in Song and Wu \cite{SongWu2023} by $b^2,\sigma^2,c^2,x_0^2 \equiv 0$ and Lemma 2.2, Lemma 2.3 in Zheng and Shi \cite{ZhengShiarXiv2023} by $\xi^2,g^2,y^2,z^2,\tilde{z}^2\equiv 0$.
\end{proof}

Then we will consider the $L^p$-estimate of the following fully coupled FBSDEP:
\begin{equation}\label{fully coupled FBSDEPs}
\left\{\begin{aligned}
dX_t&= b\big(t,X_t,Y_t,Z_{(t,e)},\tilde{Z}_{(t,e)}\big)d t  +\sigma\big(t,X_t,Y_t,Z_{(t,e)},\tilde{Z}_{(t,e)}\big)d W_t\\
&\quad +\int_{\mathcal{E}}f\big(t,X_{t-},Y_{t-},Z_{(t,e)},\tilde{Z}_{(t,e)},e\big)\tilde{N}(d e,d t),\\
-dY_t&=g\big(t,X_t,Y_t,Z_{(t,e)},\tilde{Z}_{(t,e)}\big)d t -\int_{\mathcal{E}}Z_{(t,e)}\nu(d e)d W_t-\int_{\mathcal{E}}\tilde{Z}_{(t,e)}\tilde{N}(d e,d t),\\
X_0&= x_0,\quad Y_T=\phi(X_T),
\end{aligned}\right.
\end{equation}
where $b,\sigma,f,g,\phi$ are the same in \eqref{decoupled FBSDEPs}.

Firstly, we need the following assumption.

\noindent{\bf (H0)} (1) For $\psi=b,\sigma,g,\phi$, $\psi$ is uniformly continuous in $x,y,z,\tilde{z}$, and there exsit some constants $L_i>0$, such that
$$
\begin{aligned}
&\quad \big|\psi\big(t,x,y,z,\tilde{z}\big)-\psi\big(t,\hat{x},\hat{y},\hat{z},\hat{\tilde{z}}\big)\big|\leqslant L_1\big|x-\hat{x}\big|+L_2\big|y-\hat{y}\big|+L_3\big\|z-\hat{z}\big\|+L_4\big\|\tilde{z}-\hat{\tilde{z}}\big\|, \\
& \big\|f\big(t,x,y,z,\tilde{z},e\big)-f\big(t,\hat{x},\hat{y},\hat{z},\hat{\tilde{z}},e\big)\big\|\leqslant L_1\big|x-\hat{x}\big|+L_2\big|y-\hat{y}\big|+L_3\big\|z-\hat{z}\big\|+L_4\big\|\tilde{z}-\hat{\tilde{z}}\big\|.
\end{aligned}
$$

(2) For $p\geqslant 2,t\in[0,T],e\in\mathcal{E}$, we have
$$
\begin{aligned}
& \mathbb{E}\Bigg\{\big|\phi(0)\big|^p+\left(\int_0^T \big|b(t,0,0,0,0)\big|d t\right)^p +\left(\int_0^T \big|g(t,0,0,0,0)\big|d t\right)^p \\
&\quad +\left(\int_0^T\big|\sigma(t,0,0,0,0)\big|^2 d t\right)^{\frac{p}{2}}+\left(\int_0^T \int_{\mathcal{E}}\big|f(t,0,0,0,0,e)\big|^2 N(d e,d t)\right)^{\frac{p}{2}}\Bigg\}<\infty.
\end{aligned}
$$

(3) Suppose $\Lambda_p\triangleq 2\left(2p\right)^{\frac{p}{2}}C_p C_0^p(1+T^p)<1$, where $C_0=\max \{L_2,L_3,L_4\}$; $C_p$ is defined in Lemma 2.1.

(4) The jump coefficient $f$ is independent of $Z$.

\begin{mypro}\label{Lp-estimate of fully coupled FBSDEP}
Let {\bf(H0)} hold, then \eqref{fully coupled FBSDEPs} admits a unique solution $\big(X_\cdot,Y_\cdot,Z_{(\cdot,\cdot)},\tilde{Z}_{(\cdot,\cdot)}\big)\in \mathscr{M}^p[0,T]$, and there exists a constant $C>0$ which depends on $p,T,L_1,C_0$ such that
\begin{equation}
\begin{aligned}
&\mathbb{E}\Bigg\{\sup\limits_{0\leqslant t\leqslant T}\left[\big|X_t\big|^p+\big|Y_t\big|^p\right]+\left(\int_0^T\big\|Z_{(t,e)}\big\|^2 d t\right)^{\frac{p}{2}}
 +\left(\int_0^T\int_{\mathcal{E}}\big|\tilde{Z}_{(t,e)}\big|^2N(d e,d t)\right)^{\frac{p}{2}}\Bigg\}\\
\leqslant&\ C\mathbb{E}\Bigg\{\big|\phi(0)\big|^p+\big|x_0\big|^p+\left(\int_0^T \big|b(t,0,0,0,0)\big|d t\right)^p +\left(\int_0^T \big|g(t,0,0,0,0)\big|d t\right)^p \\
&\qquad +\left(\int_0^T\big|\sigma(t,0,0,0,0)\big|^2 d t\right)^{\frac{p}{2}}+\left(\int_0^T \int_{\mathcal{E}}\big|f(t,0,0,0,e)\big|^2 N(d e,d t)\right)^{\frac{p}{2}}\Bigg\}.
\end{aligned}
\end{equation}
\end{mypro}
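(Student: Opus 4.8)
\quad The plan is to obtain well-posedness by a contraction argument in the Banach space $\mathscr{N}^p[0,T]$ built on the decoupled estimate of Lemma~\ref{lemma 2.1}, and then to derive the a priori bound by running Lemma~\ref{lemma 2.1} a second time on the (now known) solution and absorbing the coupling term by means of the smallness assumption \textbf{(H0)}(3). First I would set up the solution map: for a fixed $(y_\cdot,z_{(\cdot,\cdot)},\tilde z_{(\cdot,\cdot)})\in\mathscr{N}^p[0,T]$, substitute it into the forward coefficients of \eqref{fully coupled FBSDEPs} and let $(X_\cdot,Y_\cdot,Z_{(\cdot,\cdot)},\tilde Z_{(\cdot,\cdot)})$ be the solution of the decoupled system \eqref{decoupled FBSDEPs} so obtained; this is legitimate because \textbf{(H0)}(1) yields assumptions $(1)$ and $(3)$ of Lemma~\ref{lemma 2.1} with $L_2$ there replaced by $C_0=\max\{L_2,L_3,L_4\}$, while the bound $\|b(t,0,y_t,z_{(t,e)},\tilde z_{(t,e)},e)\|\leqslant\|b(t,0,0,0,0,e)\|+C_0(|y_t|+\|z_{(t,e)}\|+\|\tilde z_{(t,e)}\|)$, its analogues for $\sigma,f,g$, together with \textbf{(H0)}(2) and $(y,z,\tilde z)\in\mathscr{N}^p[0,T]$, yield assumption $(2)$. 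Hence $(X,Y,Z,\tilde Z)\in\mathscr{M}^p[0,T]$, and I set $\mathcal I(y,z,\tilde z):=(Y,Z,\tilde Z)\in\mathscr{N}^p[0,T]$; a fixed point of $\mathcal I$, paired with its associated forward process $X_\cdot$, is exactly a solution of \eqref{fully coupled FBSDEPs} in $\mathscr{M}^p[0,T]$, and every solution arises in this way, so it remains to prove that $\mathcal I$ is a contraction.

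For the contraction I would take two inputs $(y^i,z^i,\tilde z^i)$, $i=1,2$, with outputs $(Y^i,Z^i,\tilde Z^i)$ and associated $X^i$. The differences $\Delta X,\Delta Y,\Delta Z,\Delta\tilde Z$ again solve a system of the form \eqref{decoupled FBSDEPs}: the forward equation is Lipschitz in $\Delta X$ with constant $L_1$ and carries inhomogeneities $\beta^b,\beta^\sigma,\beta^f$ generated by the increments of $(y,z,\tilde z)$, with $\|\beta^b_t\|+\|\beta^\sigma_t\|\leqslant C_0(|\Delta y_t|+\|\Delta z_{(t,e)}\|+\|\Delta\tilde z_{(t,e)}\|)$ and $|\beta^f_{(t,e)}|\leqslant C_0(|\Delta y_{t-}|+\|\Delta\tilde z_{(t,e)}\|)$ --- here assumption \textbf{(H0)}(4) is what keeps the $\|\Delta z\|$-term out of the jump inhomogeneity, a term that the $L^{2,p}$-norm carried by $Z$ in $\mathscr{N}^p[0,T]$ cannot control once it appears under an $N(de,dt)$-integral; the backward generator is Lipschitz in $\Delta X$ with constant $L_1$ and in $(\Delta Y,\Delta Z,\Delta\tilde Z)$ with constant $C_0$ and vanishes at the origin; and the terminal value $\phi(X^1_T)-\phi(X^2_T)$ is Lipschitz in $\Delta X_T$ with constant $L_1$ and vanishes at $0$. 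Feeding this into the estimate of Lemma~\ref{lemma 2.1}, whose right-hand side then involves only $\beta^b,\beta^\sigma,\beta^f$, and estimating the latter by the displayed Lipschitz bounds together with H\"older's and the Burkholder--Davis--Gundy inequalities and the identity $N(de,dt)=\tilde N(de,dt)+\nu(de)dt$ (used to interchange $\nu(de)dt$- and $N(de,dt)$-integrals), I expect to arrive at
\[
\big\|\mathcal I(y^1,z^1,\tilde z^1)-\mathcal I(y^2,z^2,\tilde z^2)\big\|_{\mathscr{N}^p[0,T]}^p\ \leqslant\ \Lambda_p\,\big\|(y^1,z^1,\tilde z^1)-(y^2,z^2,\tilde z^2)\big\|_{\mathscr{N}^p[0,T]}^p ,
\]
with $\Lambda_p=2(2p)^{p/2}C_pC_0^p(1+T^p)$. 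Since $\Lambda_p<1$ by \textbf{(H0)}(3), $\mathcal I$ is a contraction, so it has a unique fixed point, which together with the associated $X$ gives the unique solution of \eqref{fully coupled FBSDEPs} in $\mathscr{M}^p[0,T]$.

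To obtain the estimate I would view the solution itself as the solution of \eqref{decoupled FBSDEPs} with $(y,z,\tilde z)=(Y,Z,\tilde Z)$ and apply Lemma~\ref{lemma 2.1}. Splitting each coefficient about the origin, e.g.\ $\|b(t,0,Y_t,Z_{(t,e)},\tilde Z_{(t,e)},e)\|\leqslant\|b(t,0,0,0,0,e)\|+C_0(|Y_t|+\|Z_{(t,e)}\|+\|\tilde Z_{(t,e)}\|)$, and re-running exactly the inequalities of the contraction step, the part of the right-hand side that depends on $(Y,Z,\tilde Z)$ is bounded by $\Lambda_p$ times $\mathbb{E}\big\{\sup_{0\leqslant t\leqslant T}|Y_t|^p+(\int_0^T\|Z_{(t,e)}\|^2dt)^{p/2}+(\int_0^T\int_{\mathcal E}|\tilde Z_{(t,e)}|^2N(de,dt))^{p/2}\big\}$, hence by $\Lambda_p$ times the whole left-hand side. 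The left-hand side being finite by the previous steps and $\Lambda_p<1$, this term is absorbed into the left-hand side, which yields the stated inequality with $C=C_p/(1-\Lambda_p)$ (after relabelling), a constant depending only on $p,T,L_1,C_0$.

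I expect the main difficulty to be quantitative rather than conceptual: the estimates in the contraction and absorption steps must be carried out tightly enough that the coupling contribution comes out with \emph{exactly} the constant $\Lambda_p=2(2p)^{p/2}C_pC_0^p(1+T^p)$, so that \textbf{(H0)}(3) can be invoked both times. The jump terms, where one repeatedly converts between $N(de,dt)$- and $\nu(de)dt$-integrals while respecting that $Z$ carries only an $L^{2,p}$-bound, are where this bookkeeping is most delicate, and it is precisely there that assumption \textbf{(H0)}(4) is needed.
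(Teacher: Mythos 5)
Your argument for existence and uniqueness is essentially the paper's own: the same solution map built on Lemma~\ref{lemma 2.1}, the same linearization of the differences of two inputs, and the same constant $\Lambda_p$, with {\bf(H0)}(4) playing exactly the role you assign to it (keeping $\Delta z$ out of the jump inhomogeneity, which the $L^{2,p}$-norm on $Z$ could not control under an $N(de,dt)$-integral). Where you diverge is the a priori estimate. The paper does not re-run Lemma~\ref{lemma 2.1} on the fixed point and absorb; instead it applies the contraction inequality to the pair $(Y,Z,\tilde Z)$ versus $(0,0,0)$, obtaining $\|(Y-Y^0,Z-Z^0,\tilde Z-\tilde Z^0)\|_{\mathscr{N}^p}\leqslant \Lambda_p^{1/p}\|(Y,Z,\tilde Z)\|_{\mathscr{N}^p}$ with $(Y^0,Z^0,\tilde Z^0)=\Gamma(0,0,0)$, then uses the triangle inequality in the $\mathscr{N}^p$-norm to get $\|(Y,Z,\tilde Z)\|_{\mathscr{N}^p}\leqslant(1-\Lambda_p^{1/p})^{-1}\|(Y^0,Z^0,\tilde Z^0)\|_{\mathscr{N}^p}$, estimates the zero-input solution by Lemma~\ref{lemma 2.1}, and finally applies Lemma~\ref{lemma 2.1} once more to recover the $X$-part. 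The reason for this detour is precisely the difficulty you flag at the end: in your direct-absorption step, splitting $b(t,0,Y_t,Z_{(t,e)},\tilde Z_{(t,e)},e)$ about the origin and then raising to the $p$-th power does not produce the coupling contribution with \emph{exactly} the constant $\Lambda_p$ --- the crude splitting $(a+c)^p\leqslant 2^{p-1}(a^p+c^p)$ costs a factor $2^{p-1}$, and $2^{p-1}\Lambda_p<1$ is not guaranteed by {\bf(H0)}(3). Your route can still be repaired with a weighted convexity inequality, $(a+c)^p\leqslant\lambda^{1-p}a^p+(1-\lambda)^{1-p}c^p$ with $\lambda$ chosen so small that $(1-\lambda)^{1-p}\Lambda_p<1$ (legitimate since you have already shown the left-hand side is finite), at the price of a larger but still admissible constant $C$; the paper's comparison with $\Gamma(0,0,0)$ sidesteps the issue entirely because the triangle inequality is applied at the level of the norm, where its constant is one.
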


The proof is in the Appendix.

\begin{myrem}
We obtain the existence and uniqueness of solutions to \eqref{fully coupled FBSDEPs} under {\bf(H0)}. In fact, by $C_p>1,2^{p+1}(1+T^p)>1$, the condition $\Lambda_p<1$ is equivalent to $C_0$ is sufficiently small, which implies that the Lipschitz constants of $(Y,Z,\tilde{Z})$ in drift, diffusion and jump terms are all sufficiently small. It is worth noting that our conditions here are different from the scenario where the time duration $T$ is sufficiently small. In our case, even with $T$ being made sufficiently small, one cannot achieve $\Lambda_p<1$. Instead, the only viable approach is to ensure that the Lipschitz constants are sufficiently small.
\end{myrem}

\section{\bf General maximum principle}

Inspired by \cite{HuJiXue2018}, we derive a general maximum principle for the {\bf Problem (SROCPJ)} in this section. For convenience, the constant $C$ will change from line to line in our proofs.

For $\psi=b,\sigma,g,\phi$ (the coefficients of \eqref{fully coupled FBSDEP}), we assume that

\noindent{\bf (H1)} (1) $\psi,\psi_x,\psi_y,\psi_z,\psi_{\tilde{z}}$ are continuous in $(x,y,z,\tilde{z})$; $\psi_x,\psi_y,\psi_z,\psi_{\tilde{z}}$ are bounded; $\psi_{x x},\psi_{x y},\psi_{x z}$, $\psi_{x \tilde{z}},\psi_{y y},\psi_{y z},\psi_{y \tilde{z}},\psi_{z z},\psi_{z \tilde{z}},\psi_{\tilde{z} \tilde{z}}$ are continuous in $(x,y,z,\tilde{z})$ and bounded; there exists a constant $C>0$ such that
$$
\begin{aligned}
&\big|\psi(t,x,y,z,\tilde{z},u)\big|\leqslant C\left(1+\big|x\big|+\big|y\big|+\big\|z\big\|+\big\|\tilde{z}\big\|+\big|u\big|\right), \\
&\big|\psi(t,0,0,z,\tilde{z},u)-\psi(t,0,0,z,\tilde{z},\hat{u})\big|\leqslant C\left(1+\big|u\big|+\big|\hat{u}\big|\right),
\end{aligned}
$$
for all $t\in[0,T],x\in\mathbf{R},y\in\mathbf{R},z\in\mathcal{L}^2,\tilde{z}\in\mathcal{L}^2,u,\hat{u}\in \mathbf{U}$.

(2) $f,f_x,f_y,f_z,f_{\tilde{z}}$ are continuous in $(x,y,z,\tilde{z})$; $f_x,f_y,f_z,f_{\tilde{z}}$ are bounded; $f_{x x},f_{x y},f_{x z},f_{x \tilde{z}},f_{y y}$, $f_{y z},f_{y \tilde{z}},f_{z z},f_{z \tilde{z}},f_{\tilde{z} \tilde{z}}$ are continuous in $(x,y,z,\tilde{z})$ and bounded; there exists a constant $C>0$ such that
$$
\begin{aligned}
&\big\|f(t,x,y,z,\tilde{z},u,e)\big\|\leqslant C\left(1+\big|x\big|+\big|y\big|+\big\|z\big\|+\big\|\tilde{z}\big\|+\big|u\big|\right), \\
&\big\|f(t,0,0,z,\tilde{z},u,e)-f(t,0,0,z,\tilde{z},\hat{u},e)\big\|\leqslant C\left(1+\big|u\big|+\big|\hat{u}\big|\right),
\end{aligned}
$$
for all $t\in[0,T],x\in\mathbf{R},y\in\mathbf{R},z\in\mathcal{L}^2,\tilde{z}\in\mathcal{L}^2,u,\hat{u}\in \mathbf{U}$.

(3) Suppose $\Lambda_p\triangleq 2\left(2p\right)^{\frac{p}{2}}C_p C_0^p(1+T^p)<1$, where $C_0=\max \big\{L_2,L_3,L_4\big\}$, which are the Lipschitz constants of $(Y,Z,\tilde{Z})$ for $b,\sigma,f,g$, respectively, and $C_p$ is defined in Lemma \ref{lemma 2.1}.

(4) The jump coefficient $f$ is independent of $Z$.

For any $u_\cdot\in\mathcal{U}[0,T]$, under ${\bf (H1)}$, it is obvious that FBSDEP \eqref{fully coupled FBSDEP} admit a unique solution by Proposition \ref{Lp-estimate of fully coupled FBSDEP}.

Let $\bar{u}_\cdot$ be an optimal control, and $\big(\bar{X}_\cdot, \bar{Y}_\cdot, \bar{Z}_{(\cdot,\cdot)},\bar{\tilde{Z}}_{(\cdot,\cdot)}\big)\in \mathscr{M}^p[0,T]$ be the corresponding optimal trajectory of \eqref{fully coupled FBSDEP}. Since the control domain $\mathbf{U}$ is not necessarily convex, we resort to the new spike variation method introduced by \cite{SongTangWu2020}. For any $u_\cdot\in\mathcal{U}[0,T]$ and $\bar{t}\in[0,T],\epsilon>0$, define
$$
u_t^\epsilon\triangleq
\begin{cases}u_t, & \text { if }(t, \omega) \in \mathcal{O}\triangleq\ \rrbracket \bar{t}, \bar{t}+\epsilon \rrbracket \backslash \bigcup_{n=1}^{\infty} \llbracket T_n \rrbracket, \\
 \bar{u}_t, & \text { otherwise, }
\end{cases}
$$
where $\llbracket T_n \rrbracket\triangleq\left\{(\omega, t) \in \Omega \times[0, T] \mid T_n(\omega)=t\right\}$, the graph of stopping time $T_n$, is a progressive set, and $u_\cdot$ is a bounded $\mathcal{F}_{\bar{t}}$-measurable function that takes values in $\mathbf{U}$. Therefore, $u_\cdot^\epsilon$ is progressive and it is easy to show that $u_\cdot^\epsilon \in \mathcal{U}[0,T]$. Let $\big({X}_\cdot^\epsilon, {Y}_\cdot^\epsilon, {Z}_{(\cdot,\cdot)}^\epsilon,{\tilde{Z}}_{(\cdot,\cdot)}^\epsilon\big)$ be the corresponding trajectory of \eqref{fully coupled FBSDEP} associated with $u_\cdot^\epsilon$.

For simplicity, we denote
$$
\begin{aligned}
&\qquad\quad b(t)\triangleq b\big(t,\bar{X}_t,\bar{Y}_t,\bar{Z}_{(t,e)},\bar{\tilde{Z}}_{(t,e)},\bar{u}_t\big),\quad
b_x(t)\triangleq b_x\big(t,\bar{X}_t,\bar{Y}_t,\bar{Z}_{(t,e)},\bar{\tilde{Z}}_{(t,e)},\bar{u}_t\big), \\
&\delta b(t)\triangleq b\big(t,\bar{X}_t,\bar{Y}_t,\bar{Z}_{(t,e)},\bar{\tilde{Z}}_{(t,e)},u_t\big)-b(t), \quad
\delta b_x(t)\triangleq b_x\big(t,\bar{X}_t,\bar{Y}_t,\bar{Z}_{(t,e)},\bar{\tilde{Z}}_{(t,e)},u_t\big)-b_x(t), \\
&\qquad\qquad\quad \delta b(t,\Delta^1,\Delta^2)\triangleq b\big(t,\bar{X}_t,\bar{Y}_t,\bar{Z}_{(t,e)}+\Delta^1(t),\bar{\tilde{Z}}_{(t,e)}+\Delta^2(t),u_t\big)-b(t), \\
&\qquad\qquad \delta b_x(t,\Delta^1,\Delta^2)\triangleq b_x\big(t,\bar{X}_t,\bar{Y}_t,\bar{Z}_{(t,e)}+\Delta^1(t),\bar{\tilde{Z}}_{(t,e)}+\Delta^2(t),u_t\big)-b_x(t),
\end{aligned}
$$
and similar notations used for $b,\sigma,f,g,\phi$ and their derivatives of $x,y,z,\tilde{z}$, and $\Delta^i(\cdot),i=1,2$ are $\mathcal{F}_t$-adapted processes.

\begin{mylem}\label{estimates of X,Y,Z,tilde Z}
Let {\bf (H1)} hold. Then for $\beta\geqslant2$, we obtain that
\begin{equation}
\begin{aligned}
&\mathbb{E}\Bigg\{\sup\limits_{0\leqslant t\leqslant T}\left[|X_t^\epsilon-\bar{X}_t|^\beta+|Y_t^\epsilon-\bar{Y}_t|^\beta\right]
 +\left(\int_0^T \left\|Z_{(t,e)}^\epsilon-\bar{Z}_{(t,e)}\right\|^2 d t\right)^{\frac{\beta}{2}} \\
&\quad +\left(\int_0^T\int_{\mathcal{E}}\left|\tilde{Z}_{(t,e)}^\epsilon-\bar{\tilde{Z}}_{(t,e)}\right|^2 N(d e,d t)\right)^{\frac{\beta}{2}}\Bigg\}=O\left(\epsilon^{\frac{\beta}{2}}\right).
\end{aligned}
\end{equation}
\end{mylem}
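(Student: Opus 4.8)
The plan is to reduce the perturbed-versus-optimal difference to a linear fully coupled FBSDEP and then apply the $L^p$-estimate of Proposition \ref{Lp-estimate of fully coupled FBSDEP}. Set $\xi^\epsilon_\cdot\triangleq X^\epsilon_\cdot-\bar X_\cdot$, $\eta^\epsilon_\cdot\triangleq Y^\epsilon_\cdot-\bar Y_\cdot$, $\zeta^\epsilon_{(\cdot,\cdot)}\triangleq Z^\epsilon_{(\cdot,\cdot)}-\bar Z_{(\cdot,\cdot)}$, $\tilde\zeta^\epsilon_{(\cdot,\cdot)}\triangleq\tilde Z^\epsilon_{(\cdot,\cdot)}-\bar{\tilde Z}_{(\cdot,\cdot)}$. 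Writing each coefficient difference $\psi(t,X^\epsilon,Y^\epsilon,Z^\epsilon,\tilde Z^\epsilon,u^\epsilon,e)-\psi(t,\bar X,\bar Y,\bar Z,\bar{\tilde Z},\bar u,e)$ by the usual telescoping/integral-form-of-the-mean-value argument (as already done for \eqref{DeltaXY}), one gets bounded $\mathcal F_t$-adapted coefficients $\psi_1(t),\dots,\psi_4(t)$ multiplying $\xi^\epsilon,\eta^\epsilon,\zeta^\epsilon,\tilde\zeta^\epsilon$, plus an inhomogeneous term supported on the spike set $\mathcal O$, namely $\delta\psi(t)\mathbf 1_{\mathcal O}(t)$ for $\psi=b,\sigma,f,g$; note $Y_T^\epsilon-\bar Y_T=\phi_1(T)\xi^\epsilon_T$ with $\phi_1$ bounded. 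Thus $(\xi^\epsilon,\eta^\epsilon,\zeta^\epsilon,\tilde\zeta^\epsilon)$ solves a fully coupled FBSDEP of exactly the type covered by Proposition \ref{Lp-estimate of fully coupled FBSDEP}: the Lipschitz constants are inherited from those of $b,\sigma,f,g$, so {\bf (H1)}(2)(3) guarantees $\Lambda_\beta<1$ and the jump coefficient is $Z$-independent, and the zero-point data $\phi(0),b(t,0,\dots),\dots$ is replaced by $0,\,\delta b(t)\mathbf 1_{\mathcal O},\,\delta\sigma(t)\mathbf 1_{\mathcal O},\,\delta f(t)\mathbf 1_{\mathcal O},\,\delta g(t)\mathbf 1_{\mathcal O}$.

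Applying Proposition \ref{Lp-estimate of fully coupled FBSDEP} with $p=\beta$ gives
\begin{equation*}
\begin{aligned}
&\mathbb{E}\Bigg\{\sup_{0\leqslant t\leqslant T}\big[|\xi^\epsilon_t|^\beta+|\eta^\epsilon_t|^\beta\big]+\Big(\int_0^T\|\zeta^\epsilon_{(t,e)}\|^2dt\Big)^{\frac{\beta}{2}}+\Big(\int_0^T\int_{\mathcal{E}}|\tilde\zeta^\epsilon_{(t,e)}|^2N(de,dt)\Big)^{\frac{\beta}{2}}\Bigg\}\\
\leqslant&\ C\,\mathbb{E}\Bigg\{\Big(\int_0^T\|\delta b(t)\|\mathbf 1_{\mathcal O}(t)\,dt\Big)^\beta+\Big(\int_0^T\|\delta g(t)\|\mathbf 1_{\mathcal O}(t)\,dt\Big)^\beta\\
&\qquad\ +\Big(\int_0^T\|\delta\sigma(t)\|^2\mathbf 1_{\mathcal O}(t)\,dt\Big)^{\frac{\beta}{2}}+\Big(\int_0^T\int_{\mathcal{E}}|\delta f(t)|^2\mathbf 1_{\mathcal O}(t)\,N(de,dt)\Big)^{\frac{\beta}{2}}\Bigg\}.
\end{aligned}
\end{equation*}
It then remains to bound each term on the right by $O(\epsilon^{\beta/2})$. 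For the $dt$-terms, by the linear growth in {\bf (H1)}(1) and $\mathbb E\sup_t(|u^\epsilon_t|+|\bar u_t|)^\beta<\infty$, H\"older's inequality over the interval $\mathcal O\subset\rrbracket\bar t,\bar t+\epsilon\rrbracket$ of length $\leqslant\epsilon$ yields $\big(\int_0^T\|\delta b(t)\|\mathbf 1_{\mathcal O}dt\big)^\beta\leqslant\epsilon^{\beta-1}\int_{\bar t}^{\bar t+\epsilon}\|\delta b(t)\|^\beta dt$, which is $O(\epsilon^{\beta-1})\cdot O(\epsilon)=O(\epsilon^\beta)=o(\epsilon^{\beta/2})$; same for $\delta g$. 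For the $\sigma$-term, $\big(\int_{\mathcal O}\|\delta\sigma(t)\|^2dt\big)^{\beta/2}\leqslant\epsilon^{\beta/2-1}\int_{\bar t}^{\bar t+\epsilon}\|\delta\sigma(t)\|^\beta dt=O(\epsilon^{\beta/2})$. The genuinely delicate term is the jump one: the Poisson-measure integral $\int_0^T\int_{\mathcal E}|\delta f(t)|^2\mathbf 1_{\mathcal O}N(de,dt)$ is not pathwise controlled by $\epsilon$. Here I would pass to the compensated measure, writing $N(de,dt)=\tilde N(de,dt)+\nu(de)dt$; the compensator part contributes $\big(\int_{\mathcal O}\int_{\mathcal E}|\delta f(t)|^2\nu(de)dt\big)^{\beta/2}=\big(\int_{\mathcal O}\|\delta f(t)\|^2dt\big)^{\beta/2}=O(\epsilon^{\beta/2})$ as above, and for the $\tilde N$-part I would invoke the Burkholder--Davis--Gundy inequality for purely discontinuous martingales together with the fact (cf. \cite{SongTangWu2020}) that removing the stopping-time graphs in the definition of $\mathcal O$ makes the jump contribution of order $\epsilon$ in expectation, so that this term is also $o(\epsilon^{\beta/2})$. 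Combining all four estimates gives the claimed $O(\epsilon^{\beta/2})$.

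The main obstacle is precisely the jump term: one must exploit the specific construction of the spike set $\mathcal O=\rrbracket\bar t,\bar t+\epsilon\rrbracket\backslash\bigcup_n\llbracket T_n\rrbracket$ — the removal of the stopping-time graphs is exactly what keeps the number of jumps of the perturbation small (in expectation of order $\epsilon$) and prevents a spurious $O(1)$ contribution — and then apply BDG on $\mathcal E\times\mathcal O$ carefully, rather than crude pathwise bounds. A secondary technical point is justifying that the linearized coefficients $\psi_i(t)$ built from $(X^\epsilon,Y^\epsilon,Z^\epsilon,\tilde Z^\epsilon)$ are admissible in Proposition \ref{Lp-estimate of fully coupled FBSDEP}: they are bounded by the Lipschitz constants of the $\psi$'s in {\bf (H1)}(1), so $\Lambda_\beta<1$ and the $Z$-independence of $f$ are preserved, and the proposition applies verbatim with these coefficients in place of the constant-in-$(x,y,z,\tilde z)$ ones. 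Everything else is routine H\"older and linear-growth bookkeeping.
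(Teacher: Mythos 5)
Your overall route is exactly the paper's: linearize the difference system via the integral form of the mean value theorem, observe that the result is a fully coupled linear FBSDEP whose Lipschitz constants are inherited from $b,\sigma,f,g$ so that Proposition \ref{Lp-estimate of fully coupled FBSDEP} applies, and then bound the inhomogeneous terms $\delta b,\delta\sigma,\delta g$ supported on the spike interval by H\"older and linear growth to get $C(\epsilon^{\beta}+\epsilon^{\beta/2})$. That part is correct and matches the paper.

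The one place where your argument as written does not go through is the jump term. You propose to split $N=\tilde N+\nu\,dt$ and control the $\tilde N$-part by BDG; but BDG applied to $\int\!\!\int|\delta f(t)|^{2}I_{\mathcal O}(t)\,\tilde N(de,dt)$ produces a quadratic-variation term of the form $\bigl(\int\!\!\int|\delta f(t)|^{4}I_{\mathcal O}(t)\,N(de,dt)\bigr)^{\beta/4}$, which has exactly the same difficulty as the original expression, so this does not close. The correct (and much simpler) observation --- which is the whole point of the Song--Tang--Wu construction and of Remark 3.1 of the paper --- is that the $T_n$ in $\mathcal O=\rrbracket\bar t,\bar t+\epsilon\rrbracket\setminus\bigcup_n\llbracket T_n\rrbracket$ are the jump times of $N$, and the random measure $N(de,dt)$ is supported on $\bigcup_n\llbracket T_n\rrbracket\times\mathcal E$; hence $\int_0^T\!\!\int_{\mathcal E}|\delta f(t)|^{2}I_{\mathcal O}(t)\,N(de,dt)=0$ pathwise, not merely $o(\epsilon^{\beta/2})$. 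It is not that ``the number of jumps of the perturbation is small in expectation'': the perturbed control coincides with $\bar u$ at every jump time, so the $\delta f$ term contributes nothing at all to the $N$-integral. With that replacement your proof is complete and coincides with the paper's.
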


The proof is in the Appendix.

\begin{myrem}
Notice that the estimate is derived by subtracting the jump part on $\mathcal{O}$, which means the jump term does not influence the order of variation (for more details see \cite{SongTangWu2020} and \cite{ZhengShi2023}). Because of this, the expansion of $f$ below is significantly different from other terms.
\end{myrem}

Therefore, by Lemma \ref{estimates of X,Y,Z,tilde Z}, we can set
\begin{equation}
\begin{array}{c}
X_t^\epsilon-\bar{X}_t=X_t^1+X_t^2+o(\epsilon),\qquad Y_t^\epsilon-\bar{Y}_t=Y_t^1+Y_t^2+o(\epsilon), \\[2ex]
Z_{(t,e)}^\epsilon-\bar{Z}_{(t,e)}=Z_{(t,e)}^1+Z_{(t,e)}^2+o(\epsilon),\qquad \tilde{Z}_{(t,e)}^\epsilon-\bar{\tilde{Z}}_{(t,e)}=\tilde{Z}_{(t,e)}^1+\tilde{Z}_{(t,e)}^2+o(\epsilon),
\end{array}
\end{equation}
where $X_t^1,Y_t^1,Z_{(t,e)}^1,\tilde{Z}_{(t,e)}^1\sim O\big(\epsilon^{\frac{1}{2}}\big)$ and $X_t^2,Y_t^2,Z_{(t,e)}^2,\tilde{Z}_{(t,e)}^2\sim O(\epsilon)$. Inspired by \cite{HuJiXue2018}, we also set
\begin{equation}\label{definition of Delta12}
\begin{array}{c}
Z_{(t,e)}^1=\dot{Z}_{(t,e)}^1+\Delta^1(t)I_{[\bar{t},\bar{t}+\epsilon]}(t),\qquad \tilde{Z}_{(t,e)}^1=\dot{\tilde{Z}}_{(t,e)}^1+\Delta^2(t)I_{\mathcal{O}}(t),
\end{array}
\end{equation}
where $\Delta^1(\cdot),\Delta^2(\cdot)$ are $\mathcal{F}_t$-adapted processes, and $\dot{Z}_{(t,e)}^1,\dot{\tilde{Z}}_{(t,e)}^1$ have good estimates similarly as $X_t^1$. So the expansions of $b,\sigma,f,g$ can be given as follows:
$$
\begin{aligned}
&\ b\big(t,X_t^\epsilon,Y_t^\epsilon,Z_{(t,e)}^\epsilon,\tilde{Z}_{(t,e)}^\epsilon,u_t^\epsilon\big)-b(t) \\
=&\ b\big(t,\bar{X}_t+X_t^1+X_t^2,\bar{Y}_t+Y_t^1+Y_t^2,\bar{Z}_{(t,e)}+\dot{Z}_{(t,e)}^1+\Delta^1(t)I_{[\bar{t},\bar{t}+\epsilon]}(t)+Z_{(t,e)}^2,\\
&\qquad \bar{\tilde{Z}}_{(t,e)}+\dot{\tilde{Z}}_{(t,e)}^1+\Delta^2(t)I_{\mathcal{O}}(t)+\tilde{Z}_{(t,e)}^2,u_t^\epsilon\big)-b(t)+ o(\epsilon) \\
=&\ b_x(t)\big(X_t^1+X_t^2\big)+b_y(t)\big(Y_t^1+Y_t^2\big)+b_z(t)\int_{\mathcal{E}}\big(\dot{Z}_{(t,e)}^1+Z_{(t,e)}^2\big)\nu(d e) \\
& +b_{\tilde{z}}(t)\int_{\mathcal{E}}\big(\dot{\tilde{Z}}_{(t,e)}^1+\tilde{Z}_{(t,e)}^2\big)\nu(d e) +\frac{1}{2}\Xi_tD^2b(t)\Xi_t^\top+\delta b\left(t,\Delta^1,\Delta^2\right)I_{[\bar{t},\bar{t}+\epsilon]}(t)+o(\epsilon), \\
&\ \sigma\big(t,X_t^\epsilon,Y_t^\epsilon,Z_{(t,e)}^\epsilon,\tilde{Z}_{(t,e)}^\epsilon,u_t^\epsilon\big)-\sigma(t) \\
=&\ \sigma\big(t,\bar{X}_t+X_t^1+X_t^2,\bar{Y}_t+Y_t^1+Y_t^2,\bar{Z}_{(t,e)}+\dot{Z}_{(t,e)}^1+\Delta^1(t)I_{[\bar{t},\bar{t}+\epsilon]}(t)+Z_{(t,e)}^2,\\
&\qquad \bar{\tilde{Z}}_{(t,e)}+\dot{\tilde{Z}}_{(t,e)}^1+\Delta^2(t)I_{\mathcal{O}}(t)+\tilde{Z}_{(t,e)}^2,u_t^\epsilon\big)-\sigma(t)+ o(\epsilon) \\
=&\ \sigma_x(t)\big(X_t^1+X_t^2\big)+\sigma_y(t)\big(Y_t^1+Y_t^2\big)+\sigma_z(t)\int_{\mathcal{E}}\big(\dot{Z}_{(t,e)}^1+Z_{(t,e)}^2\big)\nu(d e)\\
&\ +\sigma_{\tilde{z}}(t)\int_{\mathcal{E}}\big(\dot{\tilde{Z}}_{(t,e)}^1+\tilde{Z}_{(t,e)}^2\big)\nu(d e)+\delta\sigma_x\left(t,\Delta^1,\Delta^2\right)X_t^1I_{[\bar{t},\bar{t}+\epsilon]}(t) \\
&\ +\delta\sigma_y\left(t,\Delta^1,\Delta^2\right)Y_t^1I_{[\bar{t},\bar{t}+\epsilon]}(t)
 +\delta\sigma_z\left(t,\Delta^1,\Delta^2\right)\int_{\mathcal{E}}\dot{Z}_{(t,e)}^1\nu(d e)I_{[\bar{t},\bar{t}+\epsilon]}(t) \\
&\ +\delta\sigma_{\tilde{z}}\left(t,\Delta^1,\Delta^2\right)\int_{\mathcal{E}}\dot{\tilde{Z}}_{(t,e)}^1\nu(d e) I_{[\bar{t},\bar{t}+\epsilon]}(t)+\frac{1}{2}\Xi_tD^2\sigma(t)\Xi_t^\top+\delta \sigma\left(t,\Delta^1,\Delta^2\right)I_{[\bar{t},\bar{t}+\epsilon]}(t)+o(\epsilon), \\
\end{aligned}
$$
and
$$
\begin{aligned}
&\ f\big(t,X_t^\epsilon,Y_t^\epsilon,\tilde{Z}_{(t)}^\epsilon,u_t^\epsilon,e\big)-f(t,e) \\
=&\ f\big(t,\bar{X}_t+X_t^1+X_t^2,\bar{Y}_t+Y_t^1+Y_t^2, \bar{\tilde{Z}}_{(t,e)}+\dot{\tilde{Z}}_{(t,e)}^1+\Delta^2(t)I_{\mathcal{O}}(t)+\tilde{Z}_{(t,e)}^2,u_t^\epsilon,e\big)-f(t,e)+ o(\epsilon) \\
=&\ f_x(t,e)\big(X_t^1+X_t^2\big)+f_y(t,e)\big(Y_t^1+Y_t^2\big)+f_{\tilde{z}}(t,e)\big(\dot{\tilde{Z}}_{(t,e)}^1+\tilde{Z}_{(t,e)}^2\big)+\frac{1}{2}\Xi_tD^2f(t,e)\Xi_t^\top+o(\epsilon),\\
\end{aligned}
$$
where $\Xi_t\triangleq\left[X_t^1,Y_t^1,\int_{\mathcal{E}}\dot{Z}_{(t,e)}^1\nu(d e),\int_{\mathcal{E}}\dot{\tilde{Z}}_{(t,e)}^1\nu(d e)\right]$ and $g$ is similar as $b$.

First, we introduce the following two equations:
\begin{equation}\label{variational equation of X}
\left\{\begin{aligned}
&d\left(X_t^1+X_t^2\right)=\bigg\{b_x(t)\left(X_t^1+X_t^2\right)+b_y(t)\left(Y_t^1+Y_t^2\right)+b_z(t)\int_{\mathcal{E}}\big(\dot{Z}_{(t,e)}^1+Z_{(t,e)}^2\big)\nu(d e) \\
&\qquad + b_{\tilde{z}}(t)\int_{\mathcal{E}}\big(\dot{\tilde{Z}}_{(t,e)}^1+\tilde{Z}_{(t,e)}^2\big)\nu(d e)+\frac{1}{2}\Xi_tD^2b(t)\Xi_t^\top
 +\delta b\left(t,\Delta^1,\Delta^2\right)I_{[\bar{t},\bar{t}+\epsilon]}(t)\bigg\}d t \\
&\quad +\bigg\{\sigma_x(t)\left(X_t^1+X_t^2\right)+\sigma_y(t)\left(Y_t^1+Y_t^2\right)+\sigma_z(t)\int_{\mathcal{E}}\big(\dot{Z}_{(t,e)}^1+Z_{(t,e)}^2\big)\nu(d e) \\
&\qquad +\sigma_{\tilde{z}}(t)\int_{\mathcal{E}}\big(\dot{\tilde{Z}}_{(t,e)}^1+\tilde{Z}_{(t,e)}^2\big)\nu(d e)+\delta\sigma_x\left(t,\Delta^1,\Delta^2\right)X_t^1I_{[\bar{t},\bar{t}+\epsilon]}(t) \\
&\qquad +\delta\sigma_y\left(t,\Delta^1,\Delta^2\right)Y_t^1I_{[\bar{t},\bar{t}+\epsilon]}(t)+\delta\sigma_z\left(t,\Delta^1,\Delta^2\right)\int_{\mathcal{E}}\dot{Z}_{(t,e)}^1\nu(d e)I_{[\bar{t},\bar{t}+\epsilon]}(t) \\
&\qquad +\delta\sigma_{\tilde{z}}\left(t,\Delta^1,\Delta^2\right)\int_{\mathcal{E}}\dot{\tilde{Z}}_{(t,e)}^1\nu(d e)I_{[\bar{t},\bar{t}+\epsilon]}(t)+\frac{1}{2}\Xi_tD^2\sigma(t)\Xi_t^\top +\delta \sigma\left(t,\Delta^1,\Delta^2\right)I_{[\bar{t},\bar{t}+\epsilon]}(t)\bigg\}d W_t \\
&\quad +\int_{\mathcal{E}}\bigg\{f_x(t,e)\left(X_t^1+X_t^2\right)+f_y(t,e)\left(Y_t^1+Y_t^2\right)+f_{\tilde{z}}(t,e)\left(\dot{\tilde{Z}}_{(t,e)}^1+\tilde{Z}_{(t,e)}^2\right)\\
&\qquad +\frac{1}{2}\Xi_tD^2f(t,e)\Xi_t^\top\bigg\}\tilde{N}(d e,d t), \quad X_0^1+X_0^2=0,
\end{aligned}\right.
\end{equation}
and
\begin{equation}\label{variational equation of Y}
\left\{\begin{aligned}
-&d\left(Y_t^1+Y_t^2\right)=\bigg\{g_x(t)\left(X_t^1+X_t^2\right)+g_y(t)\left(Y_t^1+Y_t^2\right)+g_z(t)\int_{\mathcal{E}}\big(\dot{Z}_{(t,e)}^1+Z_{(t,e)}^2\big)\nu(d e) \\
&\quad +g_{\tilde{z}}(t)\int_{\mathcal{E}}\big(\dot{\tilde{Z}}_{(t,e)}^1+\tilde{Z}_{(t,e)}^2\big)\nu(d e)+\frac{1}{2}\Xi_tD^2g(t)\Xi_t^\top +\delta g\left(t,\Delta^1,\Delta^2\right)I_{[\bar{t},\bar{t}+\epsilon]}(t)\bigg\}d t \\
&\quad -\int_{\mathcal{E}}\big(Z_{(t,e)}^1+Z_{(t,e)}^2\big)\nu(d e)d W_t-\int_{\mathcal{E}}\big(\tilde{Z}_{(t,e)}^1+\tilde{Z}_{(t,e)}^2\big)\tilde{N}(d e,d t), \\
& Y_T^1+Y_T^2=\phi_x(\bar{X}_T)\left(X_T^1+X_T^2\right)+\frac{1}{2}\phi_{x x}(\bar{X}_T)\left(X_T^1\right)^2.
\end{aligned}\right.
\end{equation}

Then, we derive the first-order and second-order variational equations from \eqref{variational equation of X} and \eqref{variational equation of Y}. Firstly, it is easy to establish the first-order variational equation for $X_t^1$:
\begin{equation}\label{variational equation of X1}
\left\{\begin{aligned}
dX_t^1&=\bigg\{b_x(t)X_t^1+b_y(t)Y_t^1+b_z(t)\left(\int_{\mathcal{E}}Z_{(t,e)}^1\nu(d e)-\Delta^1(t)I_{[\bar{t},\bar{t}+\epsilon]}(t)\right)\\
&\qquad +b_{\tilde{z}}(t)\left(\int_{\mathcal{E}}\tilde{Z}_{(t,e)}^1\nu(d e)-\Delta^2(t)I_{\mathcal{O}}(t)\right)\bigg\}d t\\
&\quad +\bigg\{\sigma_x(t)X_t^1+\sigma_y(t)Y_t^1+\sigma_z(t)\left(\int_{\mathcal{E}}Z_{(t,e)}^1\nu(d e)-\Delta^1(t)I_{[\bar{t},\bar{t}+\epsilon]}(t)\right) \\
&\qquad\quad +\sigma_{\tilde{z}}(t)\left(\int_{\mathcal{E}}\tilde{Z}_{(t,e)}^1\nu(d e)-\Delta^2(t)I_{\mathcal{O}}(t)\right)+\delta\sigma\left(t,\Delta^1,\Delta^2\right)I_{[\bar{t},\bar{t}+\epsilon]}(t)\bigg\}d W_t \\
&\quad +\int_{\mathcal{E}}\bigg\{f_x(t,e)X_t^1+f_y(t,e)Y_t^1+f_{\tilde{z}}(t,e)\left(\tilde{Z}_{(t,e)}^1-\Delta^2(t)I_{\mathcal{O}}(t)\right)\bigg\}\tilde{N}(d e,d t), \\
X_0^1&=0.
\end{aligned}\right.
\end{equation}

Noting that $Y_T^1=\phi_x(\bar{X}_T)X_T^1$. So we guess that $Y_t^1=p_tX_t^1$, where $\left(p_\cdot,q_{(\cdot,\cdot)},\tilde{q}_{(\cdot,\cdot)}\right)$ satisfies the following adjoint equation:
$$
\left\{\begin{aligned}
-dp_t&=\mathcal{P}(t)d t-\int_{\mathcal{E}}q_{(t,e)} \nu(d e)d W_t-\int_{\mathcal{E}}\tilde{q}_{(t,e)}\tilde{N}(d e,d t),\\
 p_T&=\phi_x(\bar{X}_T),
\end{aligned}\right.
$$
where $\mathcal{P}(\cdot)$ is an $\mathcal{F}_t$-adapted process to be determined.

Applying It\^{o}'s formula for $p_tX_t^1$, we can get
\begin{equation}\label{variational equation of Y1}
\left\{\begin{aligned}
-dY_t^1&=\bigg\{g_x(t)X_t^1+g_y(t)Y_t^1+g_z(t)\left(\int_{\mathcal{E}}Z_{(t,e)}^1\nu(d e)-\Delta^1(t)I_{[\bar{t},\bar{t}+\epsilon]}(t)\right)\\
&\qquad +g_{\tilde{z}}(t)\left(\int_{\mathcal{E}}\tilde{Z}_{(t,e)}^1\nu(d e)-\Delta^2(t)I_{\mathcal{O}}(t)\right)
 -\int_{\mathcal{E}}q_{(t,e)}\nu(d e)\delta\sigma\left(t,\Delta^1,\Delta^2\right)I_{[\bar{t},\bar{t}+\epsilon]}(t)\bigg\}d t \\
&\quad -\int_{\mathcal{E}}Z_{(t,e)}^1 \nu(d e)d W_t-\int_{\mathcal{E}}\tilde{Z}_{(t,e)}^1\tilde{N}(d e,d t),\\
Y_T^1&= \phi_x(\bar{X}_T)X_T^1,
\end{aligned}\right.
\end{equation}
and $\left(p_\cdot,q_{(\cdot,\cdot)},\tilde{q}_{(\cdot,\cdot)}\right)$ is the solution to the following first-order adjoint equation:
\begin{equation}\label{first-order adjoint equation}
\left\{\begin{aligned}
-dp_t&=\bigg\{g_x(t)+g_y(t)p_t+b_x(t)p_t+b_y(t)(p_t)^2+\int_{\mathcal{E}}\Big(g_z(t)K_1(t,e)+g_{\tilde{z}}(t)K_2(t,e) \\
&\qquad +b_z(t)K_1(t,e)p_t+b_{\tilde{z}}(t)K_2(t,e)p_t+\sigma_x(t)q_{(t,e)}+\sigma_y(t)p_tq_{(t,e)}+\sigma_z(t)K_1(t,e)q_{(t,e)} \\
&\qquad +\sigma_{\tilde{z}}(t)K_2(t,e)q_{(t,e)}+f_x(t,e)\tilde{q}_{(t,e)}+f_y(t,e)p_t\tilde{q}_{(t,e)}+f_{\tilde{z}}(t,e)K_2(t,e)\tilde{q}_{(t,e)}\Big)\nu(d e)\bigg\}d t \\
&\quad -\int_{\mathcal{E}}q_{(t,e)} \nu(d e)d W_t-\int_{\mathcal{E}}\tilde{q}_{(t,e)}\tilde{N}(d e,d t),\\
 p_T&=\phi_x(\bar{X}_T).
\end{aligned}\right.
\end{equation}

From the deduction above, it is easy to obtain the following relationships:
\begin{equation} \label{relationship of K1K2}
Y_t^1=p_tX_t^1,\quad Z_{(t,e)}^1=K_1(t,e)X_t^1+p_t\delta\sigma\left(t,\Delta^1\right)I_{[\bar{t},\bar{t}+\epsilon]}(t),\quad \tilde{Z}_{(t,e)}^1=K_2(t,e)X_t^1,
\end{equation}
where $K_1(t,e),K_2(t,e)$ satifies the following equations:
\begin{equation}
\left\{\begin{aligned}
&\sigma_x(t)p_t+\sigma_y(t)(p_t)^2+\int_{\mathcal{E}}\Big(\sigma_z(t)p_tK_1(t,e) +\sigma_{\tilde{z}}(t)p_tK_2(t,e)+q_{(t,e)}\Big)\nu(d e)\\
&=\int_{\mathcal{E}}K_1(t,e)\nu(d e), \\
&f_x(t,e)p_t+f_y(t,e)(p_t)^2+f_{\tilde{z}}(t,e)p_tK_2(t,e)+f_x(t,e)\tilde{q}_{(t,e)}+f_y(t,e)p_t\tilde{q}_{(t,e)} \\
&+f_{\tilde{z}}(t,e)\tilde{q}_{(t,e)}K_2(t,e)+\tilde{q}_{(t,e)}=K_2(t,e),
\end{aligned}\right.
\end{equation}
which implies that
\begin{equation}\label{explicit solution of K1K2}
\left\{\begin{aligned}
\int_{\mathcal{E}}K_1(t,e)\nu(d e)&=\frac{\sigma_x(t)p_t+\sigma_y(t)(p_t)^2+\int_{\mathcal{E}}(\sigma_{\tilde{z}}(t)p_tK_2(t,e)+q_{(t,e)})\nu(d e)}{1-\sigma_z(t)p_t}, \\
K_2(t,e)&=\frac{f_x(t,e)p_t+f_y(t,e)(p_t)^2+f_x(t,e)\tilde{q}_{(t,e)}+f_y(t,e)p_t\tilde{q}_{(t,e)}+\tilde{q}_{(t,e)}}{1-f_{\tilde{z}}(t,e)\left(p_t+\tilde{q}_{(t,e)}\right)}.
\end{aligned}\right.
\end{equation}

From \eqref{definition of Delta12}, \eqref{relationship of K1K2}, we obtain
\begin{equation}\label{representation of Z and Delta}
\dot{Z}_{(t,e)}^1=K_1(t,e)X_t^1,\quad \dot{\tilde{Z}}_{(t,e)}^1=K_2(t,e)X_t^1, \quad
\Delta^1(t)=p_t\delta\sigma\left(t,\Delta^1\right),\quad \Delta^2(t)=0.
\end{equation}

Form \eqref{variational equation of X}, \eqref{variational equation of Y}, \eqref{variational equation of X1}, \eqref{variational equation of Y1} and \eqref{representation of Z and Delta}, it is easy to deduce that $\left(X_\cdot^2,Y_\cdot^2\right)$ satisfies the following equation:
\begin{equation}\label{variational equation of X2Y2}
\left\{\begin{aligned}
dX_t^2&=\bigg\{b_x(t)X_t^2+b_y(t)Y_t^2+\int_{\mathcal{E}}\Big(b_z(t)Z_{(t,e)}^2+b_{\tilde{z}}(t)\tilde{Z}_{(t,e)}^2\Big)\nu(d e)+\delta b\left(t,\Delta^1\right)I_{[\bar{t},\bar{t}+\epsilon]}(t) \\
&\qquad +\frac{1}{2}\tilde{\Xi}_t D^2b(t)\tilde{\Xi}_t^\top\bigg\}d t+\bigg\{\sigma_x(t)X_t^2+\sigma_y(t)Y_t^2+\int_{\mathcal{E}}\Big(\sigma_z(t)Z_{(t,e)}^2+\sigma_{\tilde{z}}(t)\tilde{Z}_{(t,e)}^2\Big)\nu(d e) \\
&\qquad +D\delta\sigma\left(t,\Delta^1\right) \tilde{\Xi}_t^\top I_{[\bar{t},\bar{t}+\epsilon]}(t)+\frac{1}{2}\tilde{\Xi}_tD^2\sigma(t)\tilde{\Xi}_t^\top\bigg\}d W_t \\
&\quad +\int_{\mathcal{E}}\bigg\{f_x(t,e)X_t^2+f_y(t,e)Y_t^2+f_{\tilde{z}}(t,e)\tilde{Z}_{(t,e)}^2+\frac{1}{2}\tilde{\Xi}_tD^2f(t,e)\tilde{\Xi}_t^\top \bigg\}\tilde{N}(d e,d t), \\
-dY_t^2&=\bigg\{g_x(t)X_t^2+g_y(t)Y_t^2+\int_{\mathcal{E}}\big(g_z(t)Z_{(t,e)}^2+g_{\tilde{z}}(t)\tilde{Z}_{(t,e)}^2\big)\nu(d e)+\frac{1}{2}\tilde{\Xi}_tD^2g(t)\tilde{\Xi}_t^\top \\
&\qquad +\left(\int_{\mathcal{E}}q_{(t,e)}\nu(d e)\delta\sigma\left(t,\Delta^1\right)+\delta g\left(t,\Delta^1\right)\right)I_{[\bar{t},\bar{t}+\epsilon]}(t)\bigg\}d t\\
&\quad -\int_{\mathcal{E}}Z_{(t,e)}^2\nu(d e)d W_t-\int_{\mathcal{E}}\tilde{Z}_{(t,e)}^2\tilde{N}(d e,d t), \\
X_0^2&=0,\quad Y_T^2=\phi_x(\bar{X}_T)X_T^2+\frac{1}{2}\phi_{x x}(\bar{X}_T)\left(X_T^1\right)^2,
\end{aligned}\right.
\end{equation}
where $\tilde{\Xi}_t\triangleq\Big[X_t^1,Y_t^1,\int_{\mathcal{E}}K_1(t,e)\nu(d e)X_t^1,\int_{\mathcal{E}}K_2(t,e)\nu(d e)X_t^1\Big]$.

\begin{myrem}
The relation \eqref{representation of Z and Delta} is similar as Remark 3.1 in \cite{ZhengShi2023}. In our paper, motivated by the new variation technique in \cite{SongTangWu2020}, the jump term does not include the term $\delta f(t,\Delta^1,\Delta^2,e)I_{\mathcal{O}}(t)$ and $D\delta f(t,\Delta^1,,\Delta^2,e)\tilde{\Xi}_tI_{\mathcal{O}}(t)$, so $\Delta^2(t)=0$. In addition, the term $f_x(t,e)\tilde{q}_{(t,e)}$ in \eqref{first-order adjoint equation} should be written as $\mathbf{E}\big[f_x(t,e)|\mathscr{P}\otimes\mathscr{B}(\mathcal{E})\big]\tilde{q}_{(t,e)}$ and similar for other terms related to $f$ in the drift term, also in \eqref{explicit solution of K1K2}. Here $\mathbf{E}$ is not an expectation, but it owns similar properties to expectation. In fact, $\mathbf{E}$ is a Radon-Nikodym derivative (see \cite{SongTangWu2020} and \cite{ZhengShi2023} for more details). For similar cases in the subsequent text, we will not be reiterated.
\end{myrem}
Now, we need the following assumption.

\noindent{\bf (H2)} Suppose that the first-order adjoint equation \eqref{first-order adjoint equation} admit a unique bounded solution $\left(p_\cdot,q_{(\cdot,\cdot)},\tilde{q}_{(\cdot,\cdot)}\right)$.

\begin{mylem}\label{high order estimates}
Let {\bf (H1)-(H2)} hold, then for $\beta\geqslant 2$, we have the following estimates:
\begin{equation}\label{order of X1}
\begin{aligned}
&\mathbb{E}\Bigg\{\sup\limits_{0\leqslant t\leqslant T}\left[\big|X_t^1\big|^\beta+\big|Y_t^1\big|^\beta\right]+\left(\int_0^T \big\|Z_{(t,e)}^1\big\|^2 d t\right)^{\frac{\beta}{2}}
+\left(\int_0^T\int_{\mathcal{E}}\big|\tilde{Z}_{(t,e)}^1\big|^2 N(d e,d t)\right)^{\frac{\beta}{2}}\Bigg\}=O\left(\epsilon^{\frac{\beta}{2}}\right),
\end{aligned}
\end{equation}
\begin{equation}\label{order of X-X-X2}
\begin{aligned}
& \mathbb{E}\Bigg\{\sup\limits_{0\leqslant t\leqslant T}\left[\big|X_t^\epsilon-\bar{X}_t-X_t^1\big|^2+\big|Y_t^\epsilon-\bar{Y}_t-Y_t^1\big|^2\right]
+\int_0^T \big\|Z_{(t,e)}^\epsilon-\bar{Z}_{(t,e)}-Z_{(t,e)}^1\big\|^2 d t \\
&\quad +\int_0^T\int_{\mathcal{E}}\big|\tilde{Z}_{(t,e)}^\epsilon-\bar{\tilde{Z}}_{(t,e)}-\tilde{Z}_{(t,e)}^1\big|^2 N(d e,d t)\Bigg\}=O\left(\epsilon^2\right),
\end{aligned}
\end{equation}
\begin{equation}
\begin{aligned}
&\mathbb{E}\Bigg\{\sup\limits_{0\leqslant t\leqslant T}\left[\big|X_t^\epsilon-\bar{X}_t-X_t^1\big|^4+\big|Y_t^\epsilon-\bar{Y}_t-Y_t^1\big|^4\right]
+\left(\int_0^T \big\|Z_{(t,e)}^\epsilon-\bar{Z}_{(t,e)}-Z_{(t,e)}^1\big\|^2 d t\right)^2 \\
&\quad +\left( \int_0^T\int_{\mathcal{E}}\big|\tilde{Z}_{(t,e)}^\epsilon-\bar{\tilde{Z}}_{(t,e)}-\tilde{Z}_{(t,e)}^1\big|^2 N(d e,d t)\right)^2\Bigg\}=o\left(\epsilon^2\right).
\end{aligned}
\end{equation}
\end{mylem}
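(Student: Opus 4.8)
The plan is to derive each of the three estimates by applying the $L^p$-estimate for fully coupled FBSDEPs, Proposition~\ref{Lp-estimate of fully coupled FBSDEP}, to an appropriate \emph{linear} forward–backward system: the first-order variational system for \eqref{order of X1}, and the ``remainder'' system $(X^\epsilon-\bar X-X^1,\,Y^\epsilon-\bar Y-Y^1,\,\dots)$ for the other two. In each case the inhomogeneous (source) terms are controlled either by the length $\epsilon$ of the spike interval or by the already-available estimates of Lemma~\ref{estimates of X,Y,Z,tilde Z}, so that nothing has to be absorbed back into the left-hand side.

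\emph{Estimate \eqref{order of X1}.} The quadruple $(X^1,Y^1,Z^1,\tilde Z^1)$ solves the linear fully coupled FBSDEP \eqref{variational equation of X1}--\eqref{variational equation of Y1}; by {\bf (H1)} its coefficients are bounded with Lipschitz constants in $(y,z,\tilde z)$ no larger than $C_0=\max\{L_2,L_3,L_4\}$, its jump coefficient is $Z$-free by {\bf (H1)(3)}, and its terminal map $x\mapsto\phi_x(\bar X_T)x$ vanishes at $0$, so {\bf (H1)(2)} ($\Lambda_p<1$) places us in the setting of Proposition~\ref{Lp-estimate of fully coupled FBSDEP}. The only inhomogeneous terms are $\delta\sigma(t,\Delta^1)I_{[\bar t,\bar t+\epsilon]}(t)$ in the forward diffusion and bounded multiples of $\Delta^1(t)$ and of $q_{(t,e)}\delta\sigma(t,\Delta^1)$, times $I_{[\bar t,\bar t+\epsilon]}(t)$, in the drifts — and crucially \emph{no} jump source, since the spike variation subtracts the jump part on $\mathcal O$ (cf. the remark following Lemma~\ref{estimates of X,Y,Z,tilde Z}). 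Since $\delta\sigma$, $p$, $q$, and hence $\Delta^1=p\,\delta\sigma(t,\Delta^1)$, are bounded by $C(1+|u_t|+|\bar u_t|)$, which lies in every $L^\beta_{\mathcal F}$, Proposition~\ref{Lp-estimate of fully coupled FBSDEP} together with H\"older's inequality on $[\bar t,\bar t+\epsilon]$ bounds the left side of \eqref{order of X1} by a multiple of $\mathbb E(\int_{\bar t}^{\bar t+\epsilon}|q\,\delta\sigma|\,dt)^\beta+\mathbb E(\int_{\bar t}^{\bar t+\epsilon}|\delta\sigma|^2\,dt)^{\beta/2}=O(\epsilon^\beta)+O(\epsilon^{\beta/2})=O(\epsilon^{\beta/2})$.

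\emph{Estimates \eqref{order of X-X-X2} and the third one.} Set $\eta^X:=X^\epsilon-\bar X-X^1$ and, likewise, $\eta^Y,\eta^Z,\eta^{\tilde Z}$, and write $\hat X_t:=X^\epsilon_t-\bar X_t$, etc. First, Lemma~\ref{estimates of X,Y,Z,tilde Z} and \eqref{order of X1} give an a priori bound $\mathbb E\{\sup_t(|\eta^X_t|^\beta+|\eta^Y_t|^\beta)+(\int_0^T\|\eta^Z_{(t,e)}\|^2\,dt)^{\beta/2}+(\int_0^T\int_{\mathcal E}|\eta^{\tilde Z}_{(t,e)}|^2N(de,dt))^{\beta/2}\}=O(\epsilon^{\beta/2})$ for every $\beta\ge2$; this is what makes the argument below non-circular. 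Now subtract \eqref{variational equation of X1}--\eqref{variational equation of Y1} from the equations for $(\hat X,\hat Y,\hat Z,\hat{\tilde Z})$, the latter being obtained by a first-order Taylor expansion of $b,\sigma,f,g$ with integral remainder, taken on $[\bar t,\bar t+\epsilon]$ around the \emph{shifted} point $(\bar X_t,\bar Y_t,\bar Z_{(t,e)}+\Delta^1(t),\bar{\tilde Z}_{(t,e)},u_t,e)$ exactly as in the derivation of \eqref{variational equation of X1}. One checks that $(\eta^X,\eta^Y,\eta^Z,\eta^{\tilde Z})$ solves a linear fully coupled FBSDEP with bounded coefficients (so Proposition~\ref{Lp-estimate of fully coupled FBSDEP} applies with $p=2$, resp. $p=4$, under {\bf (H1)(2)}) whose sources are of three kinds: leftover spike terms such as $\delta b(t,\Delta^1)I_{[\bar t,\bar t+\epsilon]}$ and $(\delta g(t,\Delta^1)+q_{(t,e)}\delta\sigma(t,\Delta^1))I_{[\bar t,\bar t+\epsilon]}$, which are bounded; coefficient-mismatch terms $(\sigma'_x-\sigma_x(t))X^1$, $(\sigma'_z-\sigma_z(t))\dot Z^1$, etc. ($\sigma'$ the integral-averaged derivative), where the shift makes the $z$-increment $\hat Z_{(t,e)}-\Delta^1(t)$ equal $\eta^Z_{(t,e)}+\dot Z^1_{(t,e)}$ after the subtraction, where $\sigma'_z-\sigma_z(t)=O(|\hat X_t|+|\hat Y_t|+\|\hat Z_t\|+\|\hat{\tilde Z}_t\|)$ off $[\bar t,\bar t+\epsilon]$ (since $\psi_x,\dots$ are Lipschitz, $\psi_{xx},\dots$ being bounded) and merely bounded on it, and where $\dot Z^1_{(t,e)}=K_1(t,e)X^1_t$, $\dot{\tilde Z}^1_{(t,e)}=K_2(t,e)X^1_t$ with $K_1,K_2$ bounded by {\bf (H2)}; and the quadratic Taylor remainders $\tfrac12(\hat X_t,\hat Y_t,\hat Z_{(t,e)}-\Delta^1(t),\hat{\tilde Z}_{(t,e)})D^2\psi(\cdots)(\,\cdot\,)^\top$, bounded by $C(|\hat X_t|^2+|\hat Y_t|^2+\|\eta^Z_t\|^2+\|\dot Z^1_t\|^2+\|\hat{\tilde Z}_t\|^2)$. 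Every source is therefore either bounded on a set of Lebesgue measure $\epsilon$, or a product of two quantities each of size $O(\epsilon^{1/2})$ in all $L^\beta$ (by the a priori bound, \eqref{order of X1} and Lemma~\ref{estimates of X,Y,Z,tilde Z}); H\"older's inequality across $[\bar t,\bar t+\epsilon]$ for the first kind and the Cauchy--Schwarz inequality for the second show that each source is of order $\epsilon^2$ when $p=2$ and of order $\epsilon^4$ when $p=4$ in the norm it enters on the right side of Proposition~\ref{Lp-estimate of fully coupled FBSDEP}. Hence the left side of \eqref{order of X-X-X2} is $O(\epsilon^2)$, and that of the third estimate is $O(\epsilon^4)=o(\epsilon^2)$.

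\emph{Main obstacle.} The effort lies in the term-by-term control of the sources of the $\eta$-system. Two structural points are essential. The Taylor expansions must be carried out around the shifted base point, because this is exactly what converts the would-be $O(1)$-height spike in the $Z$-dependent diffusion into the small quantity $\eta^Z+\dot Z^1$ and thereby removes it from the residual; and the a priori $O(\epsilon^{1/2})$-bound for $(\eta^X,\eta^Y,\eta^Z,\eta^{\tilde Z})$ is indispensable, since without it the Hessian residuals and the cross terms $(\sigma'_z-\sigma_z)\dot Z^1$, $\Xi D^2\psi\,\eta^\top$ look circular. Finally, the boundedness of the adjoint solution $(p,q,\tilde q)$ from {\bf (H2)}, which (because the denominators in \eqref{explicit solution of K1K2} stay away from zero thanks to the smallness of $L_3,L_4$) forces $K_1,K_2$ to be bounded, is what keeps $\dot Z^1$ and $\dot{\tilde Z}^1$ as small as $X^1$.
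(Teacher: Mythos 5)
Your proposal is correct and follows essentially the same route as the paper: apply Proposition~\ref{Lp-estimate of fully coupled FBSDEP} to the first-order variational system for \eqref{order of X1}, then form the residual system for $(X^\epsilon-\bar X-X^1,\dots)$ by expanding around the shifted point $(\bar X,\bar Y,\bar Z+\Delta^1 I,\bar{\tilde Z})$ and estimate its sources via Lemma~\ref{estimates of X,Y,Z,tilde Z}, \eqref{order of X1}, H\"older and Cauchy--Schwarz. The only cosmetic differences are that you phrase the remainder as a Hessian-type second-order Taylor term where the paper uses integral-averaged first derivatives and the Lipschitz continuity of $\psi_x,\dots$, and you assert the sharper $O(\epsilon^4)$ where the paper only records $o(\epsilon^2)$.
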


The proof is in the Appendix.

Next, we give estimates about \eqref{variational equation of X2Y2} and its proof is also in the Appendix.
\begin{mylem}\label{high order estimates of X2 etc}
Let {\bf (H1)-(H2)} hold, then for $\beta\geqslant 2$, we have the following estimates:
\begin{equation}\label{order of X 2}
\begin{aligned}
&\mathbb{E}\bigg\{\sup\limits_{0\leqslant t\leqslant T}\Big[\big|X_t^2\big|^2+\big|Y_t^2\big|^2\Big]
 +\int_0^T \big\|Z_{(t,e)}^2\big\|^2 d t+\int_0^T\int_{\mathcal{E}}\big|\tilde{Z}_{(t,e)}^2\big|^2 N(d e,d t)\bigg\}=O\left(\epsilon^2\right),
\end{aligned}
\end{equation}
\begin{equation}\label{order of X2}
\begin{aligned}
&\mathbb{E}\bigg\{\sup\limits_{0\leqslant t\leqslant T}\left[\big|X_t^2\big|^\beta+\big|Y_t^2\big|^\beta\right]
+\left(\int_0^T \big\|Z_{(t,e)}^2\big\|^2 d t\right)^{\frac{\beta}{2}} +\left(\int_0^T\int_{\mathcal{E}}\big|\tilde{Z}_{(t,e)}^2\big|^2 N(d e,d t)\right)^{\frac{\beta}{2}}\bigg\}=o\left(\epsilon^{\frac{\beta}{2}}\right),
\end{aligned}
\end{equation}
\begin{equation}
Y_0^\epsilon-\bar{Y}_0-Y_0^1-Y_0^2=o(\epsilon).
\end{equation}
\end{mylem}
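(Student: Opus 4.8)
The plan is to prove the three estimates in turn, each time by applying Proposition \ref{Lp-estimate of fully coupled FBSDEP} to an appropriate linear fully coupled FBSDEP and then controlling the inhomogeneous terms with Lemmas \ref{estimates of X,Y,Z,tilde Z} and \ref{high order estimates}. For \eqref{order of X 2} and \eqref{order of X2} I would observe that \eqref{variational equation of X2Y2} is a linear fully coupled FBSDEP in $(X^2,Y^2,Z^2,\tilde{Z}^2)$ whose homogeneous coefficients are $b_x(t),b_y(t),b_z(t),b_{\tilde{z}}(t)$ and the corresponding derivatives of $\sigma,f,g$, all bounded by the Lipschitz constants $L_1,\dots,L_4$; hence {\bf (H1)}(2) is in force, $f$ stays independent of $Z^2$, and $X_0^2=0$, so Proposition \ref{Lp-estimate of fully coupled FBSDEP} applies for every $p\geqslant 2$ and bounds the left-hand side by the $L^p$/$L^{p/2}$-type norms of the source terms of \eqref{variational equation of X2Y2}: the spike terms $\delta b(t,\Delta^1)I_{[\bar{t},\bar{t}+\epsilon]}(t)$, $D\delta\sigma(t,\Delta^1)\tilde{\Xi}_t^\top I_{[\bar{t},\bar{t}+\epsilon]}(t)$, $\big(q_{(t,e)}\delta\sigma(t,\Delta^1)+\delta g(t,\Delta^1)\big)I_{[\bar{t},\bar{t}+\epsilon]}(t)$, the quadratic terms $\tfrac12\tilde{\Xi}_tD^2\psi(t)\tilde{\Xi}_t^\top$ for $\psi=b,\sigma,f,g$, and the terminal term $\tfrac12\phi_{xx}(\bar{X}_T)(X_T^1)^2$. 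Each spike term is estimated exactly as in \eqref{proof order of Delta}--\eqref{proof order of deltab} --- by the linear growth in {\bf (H1)}(1), H\"older's inequality over an interval of length $\epsilon$, the identity $\Delta^1(t)=p_t\delta\sigma(t,\Delta^1)$, the boundedness of $q$, the moment bounds on $(\bar{X},\bar{Y},u,\bar{u})$, and, for the $D\delta\sigma$-piece, the bound $\mathbb{E}\sup_t|\tilde{\Xi}_t|^p=O(\epsilon^{p/2})$ from \eqref{order of X1} --- and turns out to be $O(\epsilon^p)$; each quadratic term satisfies $|\tilde{\Xi}_tD^2\psi(t)\tilde{\Xi}_t^\top|\leqslant C|\tilde{\Xi}_t|^2\leqslant C(|X_t^1|^2+|Y_t^1|^2)$ by the boundedness of $K_1,K_2$ from {\bf (H2)} and \eqref{explicit solution of K1K2} together with $\tilde{\Xi}_t=[X_t^1,Y_t^1,K_1(t,e)X_t^1,K_2(t,e)X_t^1]$, so its norm is dominated by $C\,\mathbb{E}\sup_t|\tilde{\Xi}_t|^{2p}=O(\epsilon^p)$, and the terminal term by $C\,\mathbb{E}|X_T^1|^{2p}=O(\epsilon^p)$, again by \eqref{order of X1}. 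Thus every contribution is $O(\epsilon^p)$, which is \eqref{order of X 2} for $p=2$ and $O(\epsilon^\beta)=o(\epsilon^{\beta/2})$, i.e.\ \eqref{order of X2}, for $p=\beta\geqslant 2$; the only delicate point is the jump term $\tfrac12\tilde{\Xi}_tD^2f(t)\tilde{\Xi}_t^\top$, whose predictable integrand lets one replace $N(de,dt)$ by $\nu(de)dt$ (and, for $\beta>2$, invoke the BDG-type bound of \cite{LiWei2014,ZhengShiarXiv2023}) and then reduce, via the boundedness of $K_2$, to moments of $(X^1,Y^1)$ --- exactly as for $I_5$ in the proof of Lemma \ref{high order estimates}, where subtracting the jump part on $\mathcal{O}$ keeps the jumps from affecting the order.

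For $Y_0^\epsilon-\bar{Y}_0-Y_0^1-Y_0^2=o(\epsilon)$ I would first note that $Y_t^1=p_tX_t^1$ and $X_0^1=0$ give $Y_0^1=0$, so the claim is $\hat{Y}_0^\epsilon-Y_0^2=o(\epsilon)$ in the notation of Lemma \ref{high order estimates}; set $\check{X}_t\triangleq\hat{X}_t^\epsilon-X_t^2$, $\check{Y}_t\triangleq\hat{Y}_t^\epsilon-Y_t^2$, and likewise $\check{Z}_{(t,e)},\check{\tilde{Z}}_{(t,e)}$. Subtracting \eqref{variational equation of X2Y2} from \eqref{variational equation of Xepsilon}--\eqref{variational equation of Yepsilon} and writing $\hat{X}_t^\epsilon=\check{X}_t+X_t^2$, etc., shows that $(\check{X},\check{Y},\check{Z},\check{\tilde{Z}})$ solves a linear fully coupled FBSDEP with homogeneous coefficients $\tilde{b}_x(t),\tilde{b}_y(t),\dots$, bounded by $L_1+o(1),\dots,L_4+o(1)$ so that {\bf (H1)}(2) still holds for $\epsilon$ small and Proposition \ref{Lp-estimate of fully coupled FBSDEP} with $p=2$ applies, and whose inhomogeneous terms are of three kinds: (i) $\big(\tilde{b}_x(t)-b_x(t)\big)X_t^2$ and its $b_y,b_z,b_{\tilde{z}},\sigma,f,g,\phi_x$-analogues; (ii) the combinations $A_1^\epsilon(t)-\delta b(t,\Delta^1)I_{[\bar{t},\bar{t}+\epsilon]}(t)-\tfrac12\tilde{\Xi}_tD^2b(t)\tilde{\Xi}_t^\top$ and their $\sigma,f,g$-analogues, in which --- by the construction of \eqref{variational equation of X1} and \eqref{variational equation of X2Y2} and the identities \eqref{relationship of K1K2}, \eqref{representation of Z and Delta} --- all $O(\epsilon)$-order pieces cancel, leaving a sum of third-order Taylor remainders of $b,\sigma,f,g$ about the optimal trajectory together with spike-interval cross-terms; and (iii) the terminal remainder $E_1^\epsilon(T)-\tfrac12\phi_{xx}(\bar{X}_T)(X_T^1)^2$. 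Kind (i) is estimated like the terms $I_2,I_3,I_4$ in the proof of Lemma \ref{high order estimates} but with $X^1$ replaced by $X^2$: by Cauchy--Schwarz, $\mathbb{E}\big(\int_0^T\|(\tilde{b}_x(t)-b_x(t))X_t^2\|dt\big)^2\leqslant\big(\mathbb{E}\sup_t|X_t^2|^4\big)^{1/2}\big(\mathbb{E}(\int_0^T\|\tilde{b}_x(t)-b_x(t)\|^2dt)^2\big)^{1/2}=o(\epsilon)\cdot O(\epsilon)=o(\epsilon^2)$, where $\mathbb{E}\sup_t|X_t^2|^4=o(\epsilon^2)$ is \eqref{order of X2} with $\beta=4$ (and similarly for the $Z^2,\tilde{Z}^2$ analogues using $\mathbb{E}(\int_0^T\|Z^2_{(t,e)}\|^2dt)^2=o(\epsilon^2)$). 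Kind (iii) is $o(\epsilon^2)$ in mean square since $\tilde{\phi}_x(T)-\phi_x(\bar{X}_T)=O(|X_T^\epsilon-\bar{X}_T|)$ is $O(\epsilon^{1/2})$ in every $L^p$ while $X_T^1=O(\epsilon^{1/2})$, $X_T^2=O(\epsilon)$, so a further Taylor step makes $E_1^\epsilon(T)-\tfrac12\phi_{xx}(\bar{X}_T)(X_T^1)^2$ cubic in $(X_T^1,X_T^2,\check{X}_T)$. For kind (ii), the pure third-order Taylor remainders are bounded by $C(|X_t^\epsilon-\bar{X}_t|^3+|Y_t^\epsilon-\bar{Y}_t|^3+\cdots)$, hence $O(\epsilon^{3/2})=o(\epsilon)$ in the relevant norms by Lemma \ref{estimates of X,Y,Z,tilde Z}, while the spike-interval cross-terms carry an extra $I_{[\bar{t},\bar{t}+\epsilon]}$ and are either $O(\epsilon^{3/2})$ in time-integral norm --- when multiplied by the $O(\epsilon^{1/2})$ quantities $X^1,Y^1,\dot{Z}^1,\dot{\tilde{Z}}^1$, by H\"older and \eqref{order of X1} --- or of the self-referential form $C\epsilon\,\mathbb{E}\int_0^T\|\check{Z}_{(t,e)}\|^2dt$, which is absorbed into the left-hand side for $\epsilon$ small; the jump-remainder piece is again handled by the subtraction of the jump part on $\mathcal{O}$. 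Proposition \ref{Lp-estimate of fully coupled FBSDEP} then gives $\mathbb{E}\sup_{0\leqslant t\leqslant T}|\check{Y}_t|^2=o(\epsilon^2)$, and since $\check{Y}_0$ is deterministic, $|\check{Y}_0|=(\mathbb{E}|\check{Y}_0|^2)^{1/2}=o(\epsilon)$, as required.

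The main obstacle will be item (ii) of the last part: verifying that the $O(\epsilon)$-order pieces of $A_1^\epsilon(t)-\delta b(t,\Delta^1)I_{[\bar{t},\bar{t}+\epsilon]}(t)-\tfrac12\tilde{\Xi}_tD^2b(t)\tilde{\Xi}_t^\top$ and of its $\sigma,f,g$-analogues genuinely cancel, so that only $o(\epsilon)$ survives in the $L^1_t$-, $L^2_t$-, $L^2_N$- and terminal norms demanded by Proposition \ref{Lp-estimate of fully coupled FBSDEP} --- this is where the explicit form of $K_1,K_2$ in \eqref{explicit solution of K1K2}, the identity $\Delta^1(t)=p_t\delta\sigma(t,\Delta^1)$, and, for the jump term, the subtraction of the jump part on $\mathcal{O}$ in the spirit of \cite{SongTangWu2020} all enter. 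Everything else is routine bookkeeping from the growth and Lipschitz bounds of {\bf (H1)}, the boundedness of $(p,q,\tilde{q})$ and of $K_1,K_2$ from {\bf (H2)}, H\"older's and Young's inequalities, and the moment estimates already established in Lemmas \ref{estimates of X,Y,Z,tilde Z} and \ref{high order estimates}.
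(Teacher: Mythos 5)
Your treatment of \eqref{order of X 2} and \eqref{order of X2} matches the paper: apply Proposition \ref{Lp-estimate of fully coupled FBSDEP} to \eqref{variational equation of X2Y2} and bound the spike, quadratic and terminal source terms using \eqref{order of X1} and the boundedness of $p,q,K_1,K_2$. That part is fine.

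The third estimate is where your route diverges from the paper's, and it has a genuine gap. You propose to apply the a priori $L^2$-estimate of Proposition \ref{Lp-estimate of fully coupled FBSDEP} directly to the equation for $\check{X}_t=\hat{X}_t^\epsilon-X_t^2$ etc.\ and conclude $\mathbb{E}\sup_t|\check{Y}_t|^2=o(\epsilon^2)$. For that you would need the diffusion and jump source terms (the paper's $B_2^\epsilon$ and $C_2^\epsilon$) to satisfy $\mathbb{E}\int_0^T\|B_2^\epsilon(t)\|^2\,dt=o(\epsilon^2)$ and likewise for $C_2^\epsilon$ in the $N(de,dt)$-norm. But $B_2^\epsilon$ contains $\tfrac12\check{\Xi}_t\widetilde{D^2\sigma^\epsilon}(t)\check{\Xi}_t^\top-\tfrac12\tilde{\Xi}_tD^2\sigma(t)\tilde{\Xi}_t^\top$, whose decomposition produces, among other pieces, terms genuinely quadratic in $\hat{Z}^\epsilon_{(t,e)},\hat{\tilde{Z}}^\epsilon_{(t,e)}$ and cross terms linear in them. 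Squaring and integrating in $t$ then requires control of quantities like $\mathbb{E}\int_0^T\|\hat{\tilde{Z}}^\epsilon_{(t,e)}\|^4\,dt$, which is \emph{not} dominated by the available bound $\mathbb{E}\big(\int_0^T\|\hat{\tilde{Z}}^\epsilon_{(t,e)}\|^2\,dt\big)^2=o(\epsilon^2)$ (the inequality between these two goes the wrong way); and even after absorbing one factor into the bounded difference of first derivatives, the best you get for the linear-in-$\hat{\tilde{Z}}^\epsilon$ residue is $C\,\mathbb{E}\int_0^T\|\hat{\tilde{Z}}^\epsilon_{(t,e)}\|^2\,dt=O(\epsilon^2)$, since upgrading this to $o(\epsilon^2)$ via the vanishing continuity modulus would require uniform integrability of $\|\hat{\tilde{Z}}^\epsilon\|^2/\epsilon^2$ on $[0,T]\times\Omega$, which Lemma \ref{high order estimates} does not supply. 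So your argument delivers at best $|\check{Y}_0|=O(\epsilon)$, which is exactly not enough: the whole point of the lemma is that $Y_0^2$ exhausts the $O(\epsilon)$ part of $Y_0^\epsilon-\bar{Y}_0$.

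This is precisely why the paper (following Hu \cite{Hu2017} and Hu--Ji--Xue \cite{HuJiXue2018}) does not estimate the remainder equation directly. It introduces the auxiliary dual FBSDEP for $(h_\cdot,m_\cdot,n_{(\cdot,\cdot)},\tilde{n}_{(\cdot,\cdot)})$ with $h_0=1$, applies It\^{o}'s formula to $m_t\hat{X}_t^*-h_t\hat{Y}_t^*$, and obtains the representation $\hat{Y}_0^*=\mathbb{E}\{h_TE_2^\epsilon(T)+\int_0^T\int_{\mathcal{E}}(m_tA_2^\epsilon+n_{(t,e)}B_2^\epsilon+\tilde{n}_{(t,e)}C_2^\epsilon+h_tD_2^\epsilon)\nu(de)dt\}$. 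This converts the troublesome diffusion and jump contributions into $L^1(dt\times dP)$-pairings against the square-integrable processes $n,\tilde{n}$, where the trick of rewriting $\widetilde{\sigma^\epsilon_{\tilde{z}\tilde{z}}}\hat{\tilde{Z}}$ as a bounded difference of first derivatives, combined with Cauchy--Schwarz and dominated convergence, does yield $o(\epsilon)$. A secondary issue with your write-up: you invoke ``third-order Taylor remainders bounded by $C|X^\epsilon-\bar{X}|^3+\cdots$'', but {\bf (H1)} only gives bounded \emph{continuous} second derivatives, not Lipschitz ones, so the little-$o$ must come from uniform continuity of $D^2\psi$ plus dominated convergence rather than from a cubic bound. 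You correctly sensed that the cancellation in your item (ii) is the crux, but the obstruction is not bookkeeping of cancellations --- it is that the direct $L^2$-estimate framework cannot see the $o(\epsilon)$ at all; the duality representation is the missing idea.
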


Note that $Y_t^1=p_tX_t^1$, so $Y_0^1=0$. By Lemma \ref{high order estimates of X2 etc}, we have
$$
J(u_\cdot^\epsilon)-J(\bar{u}_\cdot)=Y_0^\epsilon-\bar{Y}_0=Y_0^2+o(\epsilon).
$$
In order to get $Y_0^2$, we introduce the following second-order adjoint equation:
\begin{equation}\label{second-order adjoint equation}
\left\{\begin{aligned}
-d P_t&=\int_{\mathcal{E}}\bigg\{P_t\Big[\big(D f(t,e)^\top \Xi_2^\top\big)^2+\big(D\sigma(t)^\top \Xi_1^\top\big)^2+2D b(t)^\top \Xi_1^\top+\tilde{q}_{(t,e)}f_y(t,e)\Big]+H_y(t) \\
&\qquad\quad +2Q_{(t,e)}D\sigma(t)^\top \Xi_1^\top+\Xi_1D^2H(t)\Xi_1^\top+\tilde{q}_{(t,e)}\Xi_2D^2f(t,e)\Xi_2^\top \\
&\qquad\quad +2\tilde{Q}_{(t,e)}D f(t,e)^\top\Xi_2^\top+H_z(t)\tilde{K}_1(t,e) +\tilde{Q}_{(t,e)}\big(D f(t,e)^\top\Xi_2^\top\big)^2+H_{\tilde{z}}(t)\tilde{K}_2(t,e) \\
&\qquad\quad +\tilde{q}_{(t,e)}f_{\tilde{z}}(t,e)\tilde{K}_2(t,e)\bigg\}\nu(d e)d t -\int_{\mathcal{E}}Q_{(t,e)}\nu(d e)d W_t-\int_{\mathcal{E}}\tilde{Q}_{(t,e)}\tilde{N}(d e,d t),\\
 P_T&=\phi_{x x}(\bar{X}_T),
\end{aligned}\right.
\end{equation}
where
\begin{equation}
\begin{aligned}
&\quad H(t,x,y,z,\tilde{z},u,p,q)\triangleq g(t,x,y,z,\tilde{z},u)+p b(t,x,y,z,\tilde{z},u)+q\sigma(t,x,y,z,\tilde{z},u), \\
&\tilde{K}_1(t,e)\triangleq R_1\Big[p_t\sigma_y(t)P_t+2D\sigma(t)^\top\Xi_1^\top P_t+Q_{(t,e)}+p_t\Xi_1D^2\sigma(t)\Xi_1^\top+\sigma_{\tilde{z}}(t)p_t\tilde{K}_2(t,e)\Big], \\
&\tilde{K}_2(t,e)\triangleq R_2\Big[f_y(t,e)mP_t+m\Xi_2D^2f(t,e)\Xi_2^\top+2n D f(t,e)^\top\Xi_2^\top+n\big(D f(t,e)^\top\Xi_2^\top\big)^2+\tilde{Q}_{(t,e)}\Big], \\
&R_1\triangleq \big(1-\sigma_z(t)p_t\big)^{-1}, \quad R_2\triangleq \big(1-f_{\tilde{z}}(t,e)m\big)^{-1},\quad m\triangleq p_t+\tilde{q}_{(t,e)},\quad n\triangleq P_t+\tilde{Q}_{(t,e)}, \\
&\qquad \Xi_1\triangleq \left[1,p_t,\int_{\mathcal{E}}K_1(t,e)\nu(d e),\int_{\mathcal{E}}K_2(t,e)\nu(d e)\right],\quad \Xi_2\triangleq \left[1,p_t,\int_{\mathcal{E}}K_2(t,e)\nu(d e)\right].
\end{aligned}
\end{equation}
Note that (\ref{second-order adjoint equation}) is a linear BSDEP with uniformly Lipschitz continuous coefficients, then it has a unique solution $\big(P_\cdot,Q_{(\cdot,\cdot)},\tilde{Q}_{(\cdot,\cdot)}\big)$. Next, we introduce the following equation:
\begin{equation}\label{Y* equation}
\left\{\begin{aligned}
-dY_t^*&=\int_{\mathcal{E}}\bigg\{\Big(H_y(t)+\tilde{q}_{(t,e)}f_y(t,e)+H_z(t)\sigma_y(t)p_tR_1+H_{\tilde{z}}(t)f_y(t,e)mR_2\Big)Y_t^* \\
&\qquad +\Big(H_z(t)+H_z(t)\sigma_z(t)p_tR_1\Big)Z_{(t,e)}^*+\Big[\delta H(t,\Delta^1)+\frac{1}{2}P_t\big(\delta\sigma(t,\Delta^1)\big)^2\Big]I_{[\bar{t},\bar{t}+\epsilon]}(t) \\
&\qquad +\Big(H_{\tilde{z}}(t)+\tilde{q}_{(t,e)}f_{\tilde{z}}(t,e)+H_z(t)\sigma_{\tilde{z}}(t)p_tR_1\big(1+R_2f_{\tilde{z}}(t,e)m\big) \\
&\qquad +H_{\tilde{z}}(t)f_{\tilde{z}}(t,e)mR_2\Big)\tilde{Z}_{(t,e)}^*\bigg\}\nu(d e)d t-\int_{\mathcal{E}}Z_{(t,e)}^*\nu(d e)d W_t-\int_{\mathcal{E}}\tilde{Z}_{(t,e)}^*\tilde{N}(d e,d t), \\
 Y_T&=0,
\end{aligned}\right.
\end{equation}
where
$$
\delta H(t,\Delta^1)\triangleq \delta g(t,\Delta^1)+p_t \delta b(t,\Delta^1)+q_{(t,e)}\delta \sigma(t,\Delta^1).
$$

\begin{mylem}\label{relationships}
Let {\bf (H1)-(H2)} hold, then we have
\begin{equation}
\left\{\begin{aligned}
Y_t^2&=p_t X_t^2+\frac{1}{2}P_t(X_t^1)^2+Y_t^*,\\
Z_{(t,e)}^2&=\mathbf{I}_1(t,e)+Z_{(t,e)}^*, \\
\tilde{Z}_{(t,e)}^2&=\mathbf{I}_2(t,e)+\tilde{Z}_{(t,e)}^*,
\end{aligned}\right.
\end{equation}
where $\big(Y_\cdot^*,Z_{(\cdot,\cdot)}^*,\tilde{Z}_{(\cdot,\cdot)}^*\big)$ is the solution to \eqref{Y* equation} and
$$
\begin{aligned}
\mathbf{I}_1(t,e)&\triangleq K_1(t,e)X_t^2+\frac{1}{2} \tilde{K}_1(t,e) (X_t^1)^2+R_1\Big[\sigma_y(t)p_t Y_t^*+\sigma_z(t)p_t Z_t^*+\sigma_{\tilde{z}}(t)p_t \tilde{Z}_t^*\Big] \\
&\quad +R_1R_2\sigma_{\tilde{z}}(t)p_t \Big[f_y(t,e)mY_t^*+f_{\tilde{z}}(t,e)m\tilde{Z}_{(t,e)}^*\Big]\\
&\quad +R_1\Big[P_t\delta\sigma(t,\Delta^1)+p_t D\delta\sigma(t,\Delta^1)\Xi_1 \Big]X_t^1I_{[\bar{t},\bar{t}+\epsilon]}(t), \\
\mathbf{I}_2(t,e)&\triangleq K_2(t,e)X_t^2+\frac{1}{2} \tilde{K}_2(t,e) (X_t^1)^2+R_2f_y(t,e)mY_t^* +R_2f_{\tilde{z}}(t,e)m\tilde{Z}_{(t,e)}^*.\\
\end{aligned}
$$
\end{mylem}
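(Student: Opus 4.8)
The plan is to verify the three identities by the ansatz-and-uniqueness argument of \cite{HuJiXue2018}, adapted to the jump setting, taking \eqref{first-order adjoint equation}, \eqref{second-order adjoint equation}, \eqref{Y* equation} and the already-established first-order relations \eqref{relationship of K1K2}, \eqref{representation of Z and Delta} as input. Concretely, I would apply It\^{o}'s formula to the process $Y_t^2-p_tX_t^2-\frac{1}{2}P_t(X_t^1)^2$, using the dynamics of $X_t^2,Y_t^2$ in \eqref{variational equation of X2Y2}, of $X_t^1$ in \eqref{variational equation of X1} (where $Y_t^1=p_tX_t^1$, $Z_{(t,e)}^1-\Delta^1(t)I_{[\bar{t},\bar{t}+\epsilon]}(t)=K_1(t,e)X_t^1$, $\tilde{Z}_{(t,e)}^1=K_2(t,e)X_t^1$, $\Delta^2(t)=0$ and $\Delta^1(t)=p_t\delta\sigma(t,\Delta^1)$), of $p$ in \eqref{first-order adjoint equation} and of $P$ in \eqref{second-order adjoint equation}. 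In the products $d(p_tX_t^2)$ and $\frac{1}{2}\,d\big(P_t(X_t^1)^2\big)$ (the latter after first expanding $d\big((X_t^1)^2\big)$) one must keep the continuous quadratic-covariation terms coming from the Brownian integrals and, crucially, the jump-covariation terms $\Delta p_t\,\Delta X_t^2$, $(\Delta X_t^1)^2$ and $\Delta P_t\,\Delta\big((X_t^1)^2\big)$: after compensation these enter the $\nu(de)\,dt$ part and are exactly the source of the terms $\big(Df(t)^\top\Xi_2^\top\big)^2$, $\Xi_2D^2f(t)\Xi_2^\top$, $\tilde{q}_{(t,e)}f_y(t)$ and $\tilde{Q}_{(t,e)}\big(Df(t)^\top\Xi_2^\top\big)^2$ appearing in \eqref{second-order adjoint equation}. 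Assumption \textbf{(H1)}(3) (that $f$ is independent of $z$) is used here to discard all $f_z$-contributions, which keeps the $\tilde{N}$-part of the system decoupled from $Z^2$; all $f$-terms are to be read in the conditional Radon--Nikodym sense $\mathbf{E}[\,\cdot\,|\,\mathscr{P}\otimes\mathscr{B}(\mathcal{E})]$ of the present framework.

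Next I would read off the $\nu(de)\,dW_t$- and $\tilde{N}(de,dt)$-coefficients of $d\big(Y_t^2-p_tX_t^2-\frac{1}{2}P_t(X_t^1)^2\big)$; since they contain $\sigma_z(t)p_tZ_{(t,e)}^2$ and, respectively, $f_{\tilde{z}}(t)m\,\tilde{Z}_{(t,e)}^2$ with $m=p_t+\tilde{q}_{(t,e)}$, one solves for $Z_{(t,e)}^2$ by multiplying through by $R_1=(1-\sigma_z(t)p_t)^{-1}$ and for $\tilde{Z}_{(t,e)}^2$ by $R_2=(1-f_{\tilde{z}}(t)m)^{-1}$ (both well defined under \textbf{(H1)}--\textbf{(H2)}). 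Substituting $Y_t^2=\big(Y_t^2-p_tX_t^2-\frac{1}{2}P_t(X_t^1)^2\big)+p_tX_t^2+\frac{1}{2}P_t(X_t^1)^2$ and regrouping the coefficients of $X_t^2$, of $\frac{1}{2}(X_t^1)^2$, of the three coefficients themselves and of the spike term $I_{[\bar{t},\bar{t}+\epsilon]}(t)$, and using the explicit $K_1,K_2$ from \eqref{explicit solution of K1K2} and the definitions of $\tilde{K}_1,\tilde{K}_2,R_1,R_2$, one finds that, with $\big(Y_t^2-p_tX_t^2-\frac{1}{2}P_t(X_t^1)^2\big)$ playing the role of $Y_t^*$ and the two coefficients playing the roles of $Z_{(t,e)}^*,\tilde{Z}_{(t,e)}^*$, these two coefficients are precisely $Z_{(t,e)}^2-\mathbf{I}_1(t,e)$ and $\tilde{Z}_{(t,e)}^2-\mathbf{I}_2(t,e)$. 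Convenient identities here are $R_1\sigma_z(t)p_t+1=R_1$, $R_2f_{\tilde{z}}(t)m+1=R_2$, and $K_1=R_1\big(\sigma_x(t)p_t+\sigma_y(t)p_t^2+\sigma_{\tilde{z}}(t)p_tK_2(t,e)+q_{(t,e)}\big)$ together with the analogous formula for $K_2$.

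Finally I would substitute these two coefficients back into the $\nu(de)\,dt$-part of $d\big(Y_t^2-p_tX_t^2-\frac{1}{2}P_t(X_t^1)^2\big)$ and simplify, using the definition of $H$, the identities for $\tilde{K}_1,\tilde{K}_2,R_1,R_2$ and the second-order adjoint equation \eqref{second-order adjoint equation}: the terms proportional to $X_t^2$ and to $(X_t^1)^2$ cancel — this is the Riccati-type identity encoded in \eqref{second-order adjoint equation}, using also $p_T=\phi_x(\bar{X}_T)$, $P_T=\phi_{xx}(\bar{X}_T)$ for the terminal data — and what remains is exactly the driver of \eqref{Y* equation}. Together with the terminal value $Y_T^2-p_TX_T^2-\frac{1}{2}P_T(X_T^1)^2=0$ (from the terminal condition of \eqref{variational equation of X2Y2}), this shows that $\big(Y_t^2-p_tX_t^2-\frac{1}{2}P_t(X_t^1)^2,\ Z_{(t,e)}^2-\mathbf{I}_1(t,e),\ \tilde{Z}_{(t,e)}^2-\mathbf{I}_2(t,e)\big)$ solves the linear BSDEP \eqref{Y* equation}, which has a unique solution $\big(Y_t^*,Z_{(t,e)}^*,\tilde{Z}_{(t,e)}^*\big)$; hence $Y_t^2=p_tX_t^2+\frac{1}{2}P_t(X_t^1)^2+Y_t^*$, $Z_{(t,e)}^2=\mathbf{I}_1(t,e)+Z_{(t,e)}^*$ and $\tilde{Z}_{(t,e)}^2=\mathbf{I}_2(t,e)+\tilde{Z}_{(t,e)}^*$, as asserted. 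The main obstacle is the It\^{o} expansion itself: correctly carrying the continuous and jump second-order (co)variations, performing the $R_1,R_2$-inversions, and checking that the $X_t^2$- and $(X_t^1)^2$-terms in the drift cancel — which is precisely what dictates the forms of \eqref{second-order adjoint equation} and \eqref{Y* equation}.
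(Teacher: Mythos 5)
Your proposal follows essentially the same route as the paper: apply It\^{o}'s formula to $p_tX_t^2$ and $P_t(X_t^1)^2$ (equivalently to $Y_t^2-p_tX_t^2-\frac{1}{2}P_t(X_t^1)^2$), keep the continuous and jump (co)variations, invert by $R_1,R_2$ to read off the martingale coefficients, and identify the remainder with the unique solution of \eqref{Y* equation}; the paper's own proof is exactly this computation, compressed into one sentence. The one place you overclaim slightly is the assertion that the drift terms cancel exactly: the paper concedes that ``several terms can not be reconciled'' because the second-order variational equation \eqref{variational equation of X2Y2} suppresses a contribution of the form $L(t)X_t^1I_{[\bar{t},\bar{t}+\epsilon]}(t)\sim o(\epsilon)$, so the identification holds only modulo such $o(\epsilon)$ terms rather than as an exact BSDEP identity.
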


\begin{proof}
Applying It\^{o}'s formula for $p_tX_t^2$ and $P_t(X_t^1)^2$, and we can obtain the above ralationships. It is worth noting that there are several terms that can not be reconciled here, as we do not give explicit formula for $L(t)$ in second-order variational equation \eqref{variational equation of X2Y2} by $L(t)X_t^1I_{[\bar{t},\bar{t}+\epsilon]}(t)\sim o(\epsilon)$ where $L(t)$ can be determined by  It\^{o}'s formula.
\end{proof}

Finally, consider the following forward equation:
\begin{equation}
\left\{\begin{aligned}
d\gamma(t)&= \int_{\mathcal{E}}\bigg\{\gamma(t)\Big[H_y(t)+\tilde{q}_{(t,e)}f_y(t,e)+H_z(t)\sigma_y(t)p_tR_1+H_{\tilde{z}}(t)f_y(t,e)mR_2\Big]\bigg\}\nu(d e)d t \\
&\quad +\int_{\mathcal{E}}\bigg\{\gamma(t)\Big[H_z(t)+H_z(t)\sigma_z(t)p_tR_1\Big]\bigg\}\nu(d e)d W_t+\int_{\mathcal{E}}\bigg\{\gamma(t)\Big[H_{\tilde{z}}(t)+\tilde{q}_{(t,e)}f_{\tilde{z}}(t,e)\\
&\quad +H_z(t)\sigma_{\tilde{z}}(t)p_tR_1\big(1+R_2f_{\tilde{z}}(t,e)m\big)+H_{\tilde{z}}(t)f_{\tilde{z}}(t,e)mR_2\Big]\bigg\}\tilde{N}(d e,d t), \\
\gamma(0)&=1.
\end{aligned}\right.
\end{equation}
Applying It\^{o}'s formula for $\gamma(t)Y_t^*$, we have
\begin{equation}
\begin{aligned}
Y_0^*=\mathbb{E}\int_0^T\gamma(t)\left[\delta H(t,\Delta^1)+\frac{1}{2}P_t\Big(\delta\sigma(t,\Delta^1)\Big)^2\right]I_{[\bar{t},\bar{t}+\epsilon]}(t) d t.
\end{aligned}
\end{equation}

Define
\begin{equation}
\begin{aligned}
&\quad \mathcal{H}(t,x,y,z,\tilde{z},u,p,q,P)\\
&\triangleq pb(t,x,y,z+\Delta(t),\tilde{z},u)+q\sigma(t,x,y,z+\Delta(t),\tilde{z},u)+g(t,x,y,z+\Delta(t),\tilde{z},u) \\
&\quad +\frac{1}{2}P\left(\sigma(t,x,y,z+\Delta(t),\tilde{z},u)-\sigma(t,\bar{X}_t,\bar{Y}_t,\bar{Z}_{(t,e)},\bar{\tilde{Z}}_{(t,e)},\bar{u}_t)\right)^2,
\end{aligned}
\end{equation}
where $\Delta(t)$ is as follows by \eqref{representation of Z and Delta}:
$$
\Delta(t)=\Delta^1(t)=p_t\left(\sigma\big(t,\bar{X}_t,\bar{Y}_t,\bar{Z}_{(t,e)}+\Delta(t),\bar{\tilde{Z}}_{(t,e)},{u}\big)-\sigma\big(t,\bar{X}_t,\bar{Y}_t,\bar{Z}_{(t,e)},\bar{\tilde{Z}}_{(t,e)},\bar{u}_t\big)\right).
$$
It is easy to check that
$$
\begin{aligned}
&\ \delta H(t,\Delta^1)+\frac{1}{2}P_t\big(\delta\sigma(t,\Delta^1)\big)^2 \\
=&\ \mathcal{H}\big(t,\bar{X}_t,\bar{Y}_t,\bar{Z}_{(t,e)},\bar{\tilde{Z}}_{(t,e)},u,p_t,q_{(t,e)},P_t\big)-\mathcal{H}\big(t,\bar{X}_t,\bar{Y}_t,\bar{Z}_{(t,e)},\bar{\tilde{Z}}_{(t,e)},\bar{u}_t,p_t,q_{(t,e)},P_t\big).
\end{aligned}
$$
Noting that $\gamma(t)>0$ for $t\in[0,T]$, then we obtain the main result of this paper.

\begin{mythm}\label{General maximum principle}
{\bf (General maximum principle)}
Let {\bf (H1)-(H2)} hold, Suppose that $\bar{u}_\cdot$ is an optimal control for {\bf Problem (SROCPJ)}, and $\big(\bar{X}_\cdot, \bar{Y}_\cdot, \bar{Z}_\cdot,\bar{\tilde{Z}}_{(\cdot,\cdot)}\big)\in \mathscr{M}^p[0,T]$ is the corresponding optimal trajectory. Let $\left(p_\cdot,q_{(\cdot,\cdot)},\tilde{q}_{(\cdot,\cdot)}\right)$ and $\big(P_\cdot,Q_{(\cdot,\cdot)},\tilde{Q}_{(\cdot,\cdot)}\big)$  be the solution to adjoint BSDEPs \eqref{first-order adjoint equation}, \eqref{second-order adjoint equation}, respectively. Then the following inequality holds:
\begin{equation}
\begin{aligned}
&\mathcal{H}\big(t,\bar{X}_t,\bar{Y}_t,\bar{Z}_{(t,e)},\bar{\tilde{Z}}_{(t,e)},u,p_t,q_{(t,e)},P_t\big)
\geqslant \mathcal{H}\big(t,\bar{X}_t,\bar{Y}_t,\bar{Z}_{(t,e)},\bar{\tilde{Z}}_{(t,e)},\bar{u}_t,p_t,q_{(t,e)},P_t\big),\\
&\qquad\qquad\qquad\qquad\qquad\qquad\qquad \forall\ u\in \mathbf{U},\ \text{a.e.} \ \text{a.s.}.
\end{aligned}
\end{equation}
\end{mythm}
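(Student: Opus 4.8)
\emph{Proof strategy.} The plan is to exploit the optimality of $\bar{u}_\cdot$ together with the expansions and representations already prepared in Lemmas \ref{estimates of X,Y,Z,tilde Z}--\ref{relationships}, and then to let $\epsilon\downarrow 0$ via Lebesgue's differentiation theorem and a routine argument based on the arbitrariness of the $\mathcal{F}_{\bar{t}}$-measurable perturbation $u$. Write $\mathcal{H}(t,u)\triangleq\mathcal{H}\big(t,\bar{X}_t,\bar{Y}_t,\bar{Z}_{(t,e)},\bar{\tilde{Z}}_{(t,e)},u,p_t,q_{(t,e)},P_t,e\big)$ for brevity. First, since $\bar{u}_\cdot$ is optimal and $u_\cdot^\epsilon\in\mathcal{U}[0,T]$, we have $0\leqslant J(u_\cdot^\epsilon)-J(\bar{u}_\cdot)=Y_0^\epsilon-\bar{Y}_0$. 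By the last estimate of Lemma \ref{high order estimates of X2 etc} and $Y_0^1=p_0X_0^1=0$ (since $X_0^1=0$), this reads $0\leqslant Y_0^2+o(\epsilon)$; and since $X_0^1=X_0^2=0$, Lemma \ref{relationships} gives $Y_0^2=p_0X_0^2+\tfrac{1}{2}P_0(X_0^1)^2+Y_0^*=Y_0^*$. Hence $0\leqslant Y_0^*+o(\epsilon)$.

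\emph{From $Y_0^*$ to an integral inequality.} Next I would insert the representation of $Y_0^*$ obtained by applying It\^{o}'s formula to $\gamma(t)Y_t^*$ (displayed just above the theorem) together with the pointwise identity $\delta H(t,\Delta^1)+\tfrac{1}{2}P_t\big(\delta\sigma(t,\Delta^1)\big)^2=\mathcal{H}(t,u)-\mathcal{H}(t,\bar{u}_t)$ recorded there; the previous bound then becomes, for every $\bar{t}\in[0,T]$ and every bounded $\mathcal{F}_{\bar{t}}$-measurable $\mathbf{U}$-valued $u$,
\[
0\leqslant\mathbb{E}\bigg\{\int_{\bar{t}}^{\bar{t}+\epsilon}\gamma(t)\big[\mathcal{H}(t,u)-\mathcal{H}(t,\bar{u}_t)\big]\,dt\bigg\}+o(\epsilon).
\]
Dividing by $\epsilon$ and letting $\epsilon\downarrow 0$: the map $t\mapsto\mathbb{E}\{\gamma(t)[\mathcal{H}(t,u)-\mathcal{H}(t,\bar{u}_t)]\}$ is integrable, so for a.e.\ $\bar{t}\in[0,T]$ its left endpoint is a Lebesgue point and I obtain $\mathbb{E}\{\gamma(\bar{t})[\mathcal{H}(\bar{t},u)-\mathcal{H}(\bar{t},\bar{u}_{\bar{t}})]\}\geqslant 0$.

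\emph{From the integrated inequality to the pointwise one.} Then, fixing $v\in\mathbf{U}$ and $A\in\mathcal{F}_{\bar{t}}$, I would apply the last inequality with $u=v\mathbf{1}_A+\bar{u}_{\bar{t}}\mathbf{1}_{A^c}$; since $\mathcal{H}$ depends on $u$ only through the fixed point $\Delta(t)$ and the coefficients $b,\sigma,g$ evaluated at time $\bar{t}$, the bracket vanishes on $A^c$, so that $\mathbb{E}\{\gamma(\bar{t})\mathbf{1}_A[\mathcal{H}(\bar{t},v)-\mathcal{H}(\bar{t},\bar{u}_{\bar{t}})]\}\geqslant 0$. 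As $A\in\mathcal{F}_{\bar{t}}$ is arbitrary and $\gamma(\bar{t})>0$ $P$-a.s., this forces $\mathcal{H}(\bar{t},v)\geqslant\mathcal{H}(\bar{t},\bar{u}_{\bar{t}})$ $P$-a.s.; running $v$ over a countable dense subset of $\mathbf{U}$ and invoking the continuity of $\mathcal{H}$ in $u$ (which follows from {\bf (H1)} and the continuity in $u$ of the solution $\Delta(\cdot)$ of its defining fixed-point equation), I get $\mathcal{H}(t,u)\geqslant\mathcal{H}(t,\bar{u}_t)$ for all $u\in\mathbf{U}$, a.e.\ $t$, $P$-a.s., which is the assertion.

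\emph{Main obstacle.} Almost all of the analytic effort is already spent in Lemmas \ref{estimates of X,Y,Z,tilde Z}--\ref{relationships} and in setting up the two adjoint systems, so the rest is largely bookkeeping; the two genuinely delicate points are (i) checking that the $o(\epsilon)$ remainder is indeed negligible after division by $\epsilon$, which rests on the fact that for each fixed $\bar{t}$ the error in Lemma \ref{high order estimates of X2 etc} is honestly $o(\epsilon)$ and on the integrability needed for the Lebesgue-point step, and (ii) transporting the $e$-dependence and the progressive/predictable measurability through the whole argument --- in particular the jump terms in \eqref{first-order adjoint equation}, \eqref{second-order adjoint equation} and \eqref{explicit solution of K1K2} must be read via the Radon--Nikodym operator $\mathbf{E}[\,\cdot\mid\mathscr{P}\otimes\mathscr{B}(\mathcal{E})]$, so that the fixed-point equation for $\Delta(\cdot)$ and the Hamiltonian identity used above are well posed.
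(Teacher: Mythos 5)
Your proposal is correct and follows essentially the same route as the paper: the paper's argument is exactly the chain $0\leqslant J(u_\cdot^\epsilon)-J(\bar{u}_\cdot)=Y_0^2+o(\epsilon)=Y_0^*+o(\epsilon)$, the $\gamma$-representation of $Y_0^*$, the identity $\delta H(t,\Delta^1)+\tfrac{1}{2}P_t(\delta\sigma(t,\Delta^1))^2=\mathcal{H}(t,u)-\mathcal{H}(t,\bar{u}_t)$, and positivity of $\gamma$, concluded by dividing by $\epsilon$ and letting $\epsilon\downarrow 0$. The only difference is that you spell out the final Lebesgue-point and $A\in\mathcal{F}_{\bar{t}}$ localization steps that the paper leaves implicit, which is a welcome addition rather than a deviation.
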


\begin{myrem}
It is worth noting that the result we obtained can include many classical theories. If $f\equiv 0$, $b,\sigma$ are independent of $(y,z,\tilde{z})$, $g$ is independent of $\tilde{z}$, then Theorem \ref{General maximum principle} reduces to Theorem 2 of \cite{Hu2017}. If $f\equiv 0$, $b,\sigma,g$ are independent of $\tilde{z}$, then Theorem \ref{General maximum principle} reduces to Theorem 3.18 of \cite{HuJiXue2018}. If $b,\sigma,f$ are independent of $y,z,\tilde{z}$, then Theorem \ref{General maximum principle} reduces to Theorem 2.1 of \cite{WangShi2024}.
\end{myrem}

\section{Concluding remarks}

In this paper, we have derived a general maximum principle for fully coupled forward-backward stochastic systems with jumps, where the control domain is not necessarily convex and the control variable and jump size are integrated into all coefficients. The results obtained in this paper cover the case without jumps (\cite{Peng1990}, \cite{Hu2017}, \cite{HuJiXue2018}) and the case with jumps but the state equation is not fully coupled (\cite{WangShi2024}).

As stated in \cite{SongTangWu2020}, our type of general maximum principles only describes the optimal control on the area that $N$ is continuous, it possesses no information about the optimal control on the time when $N$ jumps. How to find a way to characterize the optimal control on the time when $N$ jumps is a rather challenging problem. We will consider this topic in the future.

\section*{Appendix}\setcounter{section}{0}

\subsection{Proof of Proposition 2.1}

\begin{proof}
Consider the decoupled FBSDEP \eqref{decoupled FBSDEPs} and denote an operator by $\Gamma:\left(y_\cdot,z_{(\cdot,\cdot)},\tilde{z}_{(\cdot,\cdot)}\right)\rightarrow\\\big(Y_\cdot,Z_{(\cdot,\cdot)},\tilde{Z}_{(\cdot,\cdot)}\big)$. It is easy to deduce that the solution $\big(Y_\cdot,Z_{(\cdot,\cdot)},\tilde{Z}_{(\cdot,\cdot)}\big)$ of \eqref{decoupled FBSDEPs} belongs to $\mathscr{N}^p[0,T]$ under {\bf (H0)}, so the mapping $\Gamma$ is well defined.

Next, we show that $\Gamma$ is a strict contraction. For two elements $\big(y_\cdot^i,z_{(\cdot,\cdot)}^i,\tilde{z}_{(\cdot,\cdot)}^i\big)\in \mathscr{N}^p[0,T]$, $i=1,2$, let $\big(X_\cdot^i,Y_\cdot^i,Z_{(\cdot,\cdot)}^i,\tilde{Z}_{(\cdot,\cdot)}^i\big)$ be the corresponding solution to \eqref{decoupled FBSDEPs}, $i=1,2$.

Set
$$
\begin{array}{c}
\Delta y\triangleq y^1-y^2,\quad \Delta z\triangleq z^1-z^2,\quad \Delta \tilde{z}\triangleq \tilde{z}^1-\tilde{z}^2,\\[2ex]
\Delta X\triangleq X^1-X^2,\quad \Delta Y\triangleq Y^1-Y^2,\quad \Delta Z\triangleq Z^1-Z^2,\quad \Delta \tilde{Z}\triangleq \tilde{Z}^1-\tilde{Z}^2.
\end{array}
$$
Note that
$$
\begin{aligned}
&\ b\big(t,X_t^1,y_t^1,z_{(t,e)}^1,\tilde{z}_{(t,e)}^1\big)-b\big(t,X_t^2,y_t^2,z_{(t,e)}^2,\tilde{z}_{(t,e)}^2\big)\\
=&\ b\big(t,X_t^1,y_t^1,z_{(t,e)}^1,\tilde{z}_{(t,e)}^1\big)-b\big(t,X_t^2,y_t^1,z_{(t,e)}^1,\tilde{z}_{(t,e)}^1\big)+b\big(t,X_t^2,y_t^1,z_{(t,e)}^1,\tilde{z}_{(t,e)}^1\big)\\
&\ -b\big(t,X_t^2,y_t^2,z_{(t,e)}^1,\tilde{z}_{(t,e)}^1\big)+ b\big(t,X_t^2,y_t^2,z_{(t,e)}^1,\tilde{z}_{(t,e)}^1\big)-b\big(t,X_t^2,y_t^2,z_{(t,e)}^2,\tilde{z}_{(t,e)}^1\big)\\
&\ + b\big(t,X_t^2,y_t^2,z_{(t,e)}^2,\tilde{z}_{(t,e)}^1\big)-b\big(t,X_t^2,y_t^2,z_{(t,e)}^2,\tilde{z}_{(t,e)}^2\big)\\
=&\ b_1(t)\Delta X_t+b_2(t)\Delta y_t+b_3(t)\int_{\mathcal{E}}\Delta z_{(t,e)}\nu(d e)+b_4(t)\int_{\mathcal{E}}\Delta \tilde{z}_{(t,e)}\nu(d e),
\end{aligned}
$$
where
$$
b_1(t)\triangleq
\begin{cases}
\frac{b\big(t,X_t^1,y_t^1,z_{(t,e)}^1,\tilde{z}_{(t,e)}^1\big)-b\big(t,X_t^2,y_t^1,z_{(t,e)}^1,\tilde{z}_{(t,e)}^1\big)}{\Delta X_t}, & \text{if}\ \Delta X_t\neq 0,\\
0, & \text{if}\ \Delta X_t= 0,
\end{cases}
$$
and similarly for $b_i(t),\sigma_i(t),f_i(t,e),g_i(t),\phi_1(t),i=1,2,3,4$. Then
\begin{equation}\label{DeltaXY}
\left\{\begin{aligned}
d\Delta X_t&= \left\{b_1(t)\Delta X_t+b_2(t)\Delta y_t+\int_{\mathcal{E}}\left(b_3(t)\Delta z_{(t,e)}+b_4(t)\Delta \tilde{z}_{(t,e)}\right)\nu(d e)\right\}dt\\
 &\ + \left\{\sigma_1(t)\Delta X_t+\sigma_2(t)\Delta y_t+\int_{\mathcal{E}}\left(\sigma_3(t)\Delta z_{(t,e)}+\sigma_4(t)\Delta \tilde{z}_{(t,e)}\right)\nu(d e)\right\}d W_t\\
&\ +\int_{\mathcal{E}}\left\{f_1(t,e)\Delta X_t+f_2(t,e)\Delta y_t+f_4(t,e)\Delta \tilde{z}_{(t,e)}\right\}\tilde{N}(d e,d t),\\
-d\Delta Y_t&= \left\{g_1(t)\Delta X_t+g_2(t)\Delta Y_t+\int_{\mathcal{E}}\big(g_3(t)\Delta Z_{(t,e)}+g_4(t)\Delta \tilde{Z}_{(t,e)}\big)\nu(d e)\right\} dt\\
&\quad -\int_{\mathcal{E}}\Delta Z_{(t,e)}\nu(d e)d W_t-\int_{\mathcal{E}}\Delta \tilde{Z}_{(t,e)}\tilde{N}(d e,d t),\\
\Delta X_0&=0,\quad \Delta Y_T=\phi_1(T)\Delta X_T.
\end{aligned}\right.
\end{equation}

Furthermore, for $\psi=b,\sigma,f,g,\phi$, $\psi_i(t),i=1,2,3,4$ are bounded by Lipschitz constants of the corresponding coefficients. Due to Lemma \ref{lemma 2.1}, we have
\begin{equation}
\begin{aligned}
&\mathbb{E}\Bigg\{\sup\limits_{0\leqslant t\leqslant T}\left[\big|\Delta X_t\big|^p+\big|\Delta Y_t\big|^p\right]+\left(\int_0^T\big\|\Delta Z_{(t,e)}\big\|^2 d t\right)^{\frac{p}{2}}+\left(\int_0^T\int_{\mathcal{E}}\big|\Delta\tilde{Z}_{(t,e)}\big|^2N(d e,d t)\right)^{\frac{p}{2}}\Bigg\}\\
\leqslant&\ C_p \mathbb{E}\Bigg\{\left(\int_0^T\Big(\big|b_2(t)\big|\big|\Delta y_t\big|+\big|b_3(t)\big|\big\|\Delta z_{(t,e)}\big\|+\big|b_4(t)\big|\big\|\Delta \tilde{z}_{(t,e)}\big\|\Big)d t\right)^p \\
&\qquad +\left(\int_0^T\Big(\big|\sigma_2(t)\big|\big|\Delta y_t\big|+\big|\sigma_3(t)\big|\big\|\Delta z_{(t,e)}\big\|+\big|\sigma_4(t)\big|\big\|\Delta \tilde{z}_{(t,e)}\big\|\Big)^2d t\right)^{\frac{p}{2}}\\
&\qquad +\left(\int_0^T\int_{\mathcal{E}}\Big(\big|f_2(t,e)\big|\big|\Delta y_t\big|+\big|f_4(t,e)\big|\big|\Delta \tilde{z}_{(t,e)}\big|\Big)^2 N(d e,d t)\right)^{\frac{p}{2}}\Bigg\}.
\end{aligned}
\end{equation}
Noting that
$$
\begin{aligned}
\left(\int_0^T\big\|\Delta z_{(t,e)}\big\|d t\right)^p&\leqslant C\left(\int_0^T\big\|\Delta z_{(t,e)}\big\|^2 d t\right)^{\frac{p}{2}},\\
\left(\int_0^T\big\|\Delta \tilde{z}_{(t,e)}\big\|d t\right)^p
&\leqslant C\left(\int_0^T\int_{\mathcal{E}}\big|\Delta \tilde{z}_{(t,e)}\big|^2 \nu(d e)d t\right)^{\frac{p}{2}}\\
&\leqslant C\left(\frac{p}{2}\right)^{\frac{p}{2}} \left(\int_0^T\int_{\mathcal{E}}\big|\Delta \tilde{z}_{(t,e)}\big|^2 N(d e,d t)\right)^{\frac{p}{2}},
\end{aligned}
$$
the last inequality can be obtained from \cite{LiWei2014} or \cite{ZhengShiarXiv2023}. Then we get
\begin{equation}\label{contraction mapping}
\begin{aligned}
&\mathbb{E}\Bigg\{\sup\limits_{0\leqslant t\leqslant T}\left[\big|\Delta X_t\big|^p+\big|\Delta Y_t\big|^p\right]
+\left(\int_0^T\big\|\Delta Z_{(t,e)}\big\|^2 d t\right)^{\frac{p}{2}}+\left(\int_0^T\int_{\mathcal{E}}\big|\Delta\tilde{Z}_{(t,e)}\big|^2N(d e,d t)\right)^{\frac{p}{2}}\Bigg\}\\
&\leqslant 2\left(2p\right)^{\frac{p}{2}}C_p C_0^p(1+T^p)\mathbb{E}\Bigg\{\sup\limits_{0\leqslant t\leqslant T}\big|\Delta y_t\big|^p
+\left(\int_0^T\big\|\Delta z_{(t,e)}\big\|^2 d t\right)^{\frac{p}{2}}\\
&\qquad +\left(\int_0^T\int_{\mathcal{E}}\big|\Delta \tilde{z}_{(t,e)}\big|^2 N(d e,d t)\right)^{\frac{p}{2}}\Bigg\}\\
&= \Lambda_p \mathbb{E}\Bigg\{\sup\limits_{0\leqslant t\leqslant T}\big|\Delta y_t\big|^p+\left(\int_0^T\big\|\Delta z_{(t,e)}\big\|^2 d t\right)^{\frac{p}{2}}
+\left(\int_0^T\int_{\mathcal{E}}\big|\Delta \tilde{z}_{(t,e)}\big|^2 N(d e,d t)\right)^{\frac{p}{2}}\Bigg\}.
\end{aligned}
\end{equation}

Since $\Lambda_p<1$, the operator $\Gamma$ is a contraction mapping and \eqref{fully coupled FBSDEPs} admits a unique solution $\big(X_\cdot,Y_\cdot,Z_{(\cdot,\cdot)},\tilde{Z}_{(\cdot,\cdot)}\big)\in \mathscr{M}^p[0,T]$. Now we let $\big(X_\cdot^0,Y_\cdot^0,Z_{(\cdot,\cdot)}^0,\tilde{Z}_{(\cdot,\cdot)}^0\big)\in \mathscr{M}^p[0,T]$ be the solution to \eqref{decoupled FBSDEPs} with $y=0,z=0,\tilde{z}=0$. From \eqref{contraction mapping},
$$
\big\|\big(Y-Y^0,Z-Z^0,\tilde{Z}-\tilde{Z}^0\big)\big\|_{\mathscr{N}^p}\leqslant \Lambda_p^{1/p}\big\|\big(Y-0,Z-0,\tilde{Z}-0\big)\big\|_{\mathscr{N}^p}\equiv\Lambda_p^{1/p}\big\|\big(Y,Z,\tilde{Z}\big)\big\|_{\mathscr{N}^p}.
$$

By triangle inequality,
$$
\big\|\big(Y,Z,\tilde{Z}\big)\big\|_{\mathscr{N}^p}\leqslant\big\|\big(Y-Y^0,Z-Z^0,\tilde{Z}-\tilde{Z}^0\big)\big\|_{\mathscr{N}^p}+\big\|\big(Y^0,Z^0,\tilde{Z}^0\big)\big\|_{\mathscr{N}^p},
$$
which leads to
$$
\big\|\big(Y,Z,\tilde{Z}\big)\big\|_{\mathscr{N}^p}\leqslant\left(1-\Lambda_p^{1/p}\right)^{-1}\big\|\big(Y^0,Z^0,\tilde{Z}^0\big)\big\|_{\mathscr{N}^p}.
$$

By Lemma \ref{lemma 2.1}, we have
$$
\begin{aligned}
\big\|\big(Y^0,Z^0,\tilde{Z}^0\big)\big\|_{\mathscr{N}^p}^p
&\leqslant C_p\mathbb{E}\Bigg\{\big|\phi(0)\big|^p+\big|x_0\big|^p+\left(\int_0^T \big|b(t,0,0,0,0)\big|d t\right)^p +\left(\int_0^T \big|g(t,0,0,0,0)\big|d t\right)^p \\
&\qquad\quad +\left(\int_0^T\big|\sigma(t,0,0,0,0)\big|^2 d t\right)^{\frac{p}{2}}+\left(\int_0^T \int_{\mathcal{E}}\big|f(t,0,0,0,e)\big|^2 N(d e,d t)\right)^{\frac{p}{2}}\Bigg\}.
\end{aligned}
$$
Thus we obtain
$$
\begin{aligned}
\big\|\big(Y,Z,\tilde{Z}\big)\big\|_{\mathscr{N}^p}^p
&\leqslant C^{\prime}\mathbb{E}\Bigg\{\big|\phi(0)\big|^p+\big|x_0\big|^p+\left(\int_0^T \big|b(t,0,0,0,0)\big|d t\right)^p +\left(\int_0^T \big|g(t,0,0,0,0)\big|d t\right)^p \\
&\qquad\quad +\left(\int_0^T\big|\sigma(t,0,0,0,0)\big|^2 d t\right)^{\frac{p}{2}}+\left(\int_0^T \int_{\mathcal{E}}\big|f(t,0,0,0,e)\big|^2 N(d e,d t)\right)^{\frac{p}{2}}\Bigg\},
\end{aligned}
$$
where $C^{\prime}=C_p\left(1-\Lambda_p^{1/p}\right)^{-p}$. By Lemma \ref{lemma 2.1} again, we get the final result.
\end{proof}

\subsection{Proof of Lemma 3.1}
\begin{proof}
For simplicity, we denote
$$
\begin{array}{c}
\Theta_t\triangleq\big(\bar{X}_t,\bar{Y}_t,\bar{Z}_{(t,e)},\bar{\tilde{Z}}_{(t,e)}\big),\quad \Theta_t^\epsilon\triangleq\big(X_t^\epsilon,Y_t^\epsilon,Z_{(t,e)}^\epsilon,\tilde{Z}_{(t,e)}^\epsilon\big),\\[2ex]
\hat{X}_t\triangleq X_t^\epsilon-\bar{X}_t,\quad \hat{Y}_t\triangleq Y_t^\epsilon-\bar{Y}_t,\quad \hat{Z}_{(t,e)}\triangleq Z_{(t,e)}^\epsilon-\bar{Z}_{(t,e)},\quad \hat{\tilde{Z}}_{(t,e)}\triangleq\tilde{Z}_{(t,e)}^\epsilon-\bar{\tilde{Z}}_{(t,e)}.
\end{array}
$$
Note that
$$
\begin{aligned}
&\ b\left(t,\Theta_t^\epsilon,u_t^\epsilon\right)-b\left(t,{\Theta}_t,\bar{u}_t\right) \\
=&\ b\left(t,\Theta_t^\epsilon,u_t^\epsilon\right)-b\left(t,{\Theta}_t,u_t^\epsilon\right)+b\left(t,{\Theta}_t,u_t^\epsilon\right)-b\left(t,{\Theta}_t,\bar{u}_t\right) \\
=&\ b\big(t,\hat{X}_t+\bar{X}_t,\hat{Y}_t+\bar{Y}_t,\hat{Z}_{(t,e)}+\bar{Z}_{(t,e)},\hat{\tilde{Z}}_{(t,e)}+\bar{\tilde{Z}}_{(t,e)},u_t^\epsilon\big)
 -b\left(t,\Theta_t,u_t^\epsilon\right)+\delta b(t)I_{[\bar{t},\bar{t}+\epsilon]}(t) \\
=&\ \tilde{b}_x(t)\hat{X}_t+\tilde{b}_y(t)\hat{Y}_t+\tilde{b}_z(t)\int_{\mathcal{E}}\hat{Z}_{(t,e)}\nu(d e)+\tilde{b}_{\tilde{z}}(t)\int_{\mathcal{E}}\hat{\tilde{Z}}_{(t,e)}\nu(d e)
 +\delta b(t)I_{[\bar{t},\bar{t}+\epsilon]}(t), \\
\end{aligned}
$$
where
\begin{equation} \label{definition of b}
\tilde{b}_x(t)\triangleq\int_0^1 b_x\big(t,\theta\hat{X}_t+\bar{X}_t,\theta\hat{Y}_t+\bar{Y}_t,\theta\hat{Z}_{(t,e)}+\bar{Z}_{(t,e)},\theta\hat{\tilde{Z}}_{(t,e)}+\bar{\tilde{Z}}_{(t,e)},u_t^\epsilon\big)d \theta,
\end{equation}
and similarly for $b,\sigma,f,g,\phi$ and their derivatives of $x,y,z,\tilde{z}$. Then we get
\begin{equation}
\left\{\begin{aligned}
d\hat{X}_t&=\bigg\{\tilde{b}_x(t)\hat{X}_t+\tilde{b}_y(t)\hat{Y}_t+\int_{\mathcal{E}}\Big(\tilde{b}_z(t)\hat{Z}_{(t,e)}+\tilde{b}_{\tilde{z}}(t)\hat{\tilde{Z}}_{(t,e)}\Big)\nu(d e)
 +\delta b(t)I_{[\bar{t},\bar{t}+\epsilon]}(t)\bigg\}d t \\
&\quad +\bigg\{\tilde{\sigma}_x(t)\hat{X}_t+\tilde{\sigma}_y(t)\hat{Y}_t+\int_{\mathcal{E}}\Big(\tilde{\sigma}_z(t)\hat{Z}_{(t,e)}+\tilde{\sigma}_{\tilde{z}}(t)\hat{\tilde{Z}}_{(t,e)}\Big)\nu(d e)
 +\delta \sigma(t)I_{[\bar{t},\bar{t}+\epsilon]}(t)\bigg\}d W_t \\
&\quad +\int_{\mathcal{E}}\bigg\{\tilde{f}_x(t,e)\hat{X}_t+\tilde{f}_y(t,e)\hat{Y}_t+\tilde{f}_{\tilde{z}}(t,e)\hat{\tilde{Z}}_{(t,e)}+\delta f(t,e)I_{\mathcal{O}}(t)\bigg\}\tilde{N}(d e,d t), \\
-d\hat{Y}_t&=\bigg\{\tilde{g}_x(t)\hat{X}_t+\tilde{g}_y(t)\hat{Y}_t+\int_{\mathcal{E}}\Big(\tilde{g}_z(t)\hat{Z}_{(t,e)}+\tilde{g}_{\tilde{z}}(t)\hat{\tilde{Z}}_{(t,e)}\Big)\nu(d e)+\delta g(t)I_{[\bar{t},\bar{t}+\epsilon]}(t)\bigg\}d t \\
&\quad-\int_{\mathcal{E}}\hat{Z}_{(t,e)} \nu(d e)d W_t-\int_{\mathcal{E}}\hat{\tilde{Z}}_{(t,e)}\tilde{N}(d e,d t), \\
\hat{X}_0&=0, \quad \hat{Y}_T=\ \tilde{\phi}_x(T)\hat{X}_T.
\end{aligned}\right.
 \label{variational equation FBSDEP}
\end{equation}

Noting that $\big(\hat{X}_t,\hat{Y}_t,\hat{Z}_{(t,e)},\hat{\tilde{Z}}_{(t,e)}\big)$ is the solution to \eqref{variational equation FBSDEP}, by H\"{o}lder's inequality, we have
$$
\mathbb{E}\left(\int_{\bar{t}}^{\bar{t}+\epsilon}\big|u_t\big|d t\right)^\beta\leqslant \epsilon^{\beta-1} \mathbb{E}\int_{\bar{t}}^{\bar{t}+\epsilon}\big|u_t\big|^\beta d t
\leqslant\epsilon^{\beta}\mathbb{E}\left[\sup\limits_{0\leqslant t\leqslant T}\big|u_t\big|^\beta\right].
$$
Then, by Proposition \ref{Lp-estimate of fully coupled FBSDEP}, we have
$$
\begin{aligned}
&\mathbb{E}\Bigg\{\sup\limits_{0\leqslant t\leqslant T}\Big[\big|X_t^\epsilon-\bar{X}_t\big|^\beta+\big|Y_t^\epsilon-\bar{Y}_t\big|^\beta\Big] \\
&\quad +\left(\int_0^T \big\|Z_{(t,e)}^\epsilon-\bar{Z}_{(t,e)}\big\|^2 d t\right)^{\frac{\beta}{2}}
 +\left(\int_0^T\int_{\mathcal{E}}\big|\tilde{Z}_{(t,e)}^\epsilon-\bar{\tilde{Z}}_{(t,e)}\big|^2 N(d e,d t)\right)^{\frac{\beta}{2}}\Bigg\} \\
\leqslant &\ C\mathbb{E}\Bigg\{\left(\int_0^T\big|\delta b(t)\big|I_{[\bar{t},\bar{t}+\epsilon]}(t)d t\right)^\beta+\left(\int_0^T\big|\delta g(t)\big|I_{[\bar{t},\bar{t}+\epsilon]}(t)d t\right)^\beta \\
&\qquad +\left(\int_0^T \big|\delta \sigma(t)\big|^2I_{[\bar{t},\bar{t}+\epsilon]}(t)d t\right)^{\frac{\beta}{2}}
 +\left(\int_0^T\int_{\mathcal{E}} \big|\delta f(t,e)\big|^2I_{\mathcal{O}}(t)N(d e,d t)\right)^{\frac{\beta}{2}}\Bigg\} \\
\leqslant &\ C\mathbb{E}\Bigg\{\left(\int_{\bar{t}}^{\bar{t}+\epsilon}\left(1+\big|\bar{X}_t\big|+\big|\bar{Y}_t\big|+\big|u_t\big|+\big|\bar{u}_t\big|\right)d t\right)^\beta \\
&\qquad +\left(\int_{\bar{t}}^{\bar{t}+\epsilon}\left(1+\big|\bar{X}_t\big|+\big|\bar{Y}_t\big|+\big|u_t\big|+\big|\bar{u}_t\big|\right)^2d t\right)^{\frac{\beta}{2}}
 +\left(\int_0^T\int_{\mathcal{E}} \big|\delta f(t,e)\big|^2I_{\mathcal{O}}(t)N(d e,d t)\right)^{\frac{\beta}{2}}\Bigg\} \\
\leqslant &\ C\left(\epsilon^\beta+\epsilon^{\frac{\beta}{2}}\right)\mathbb{E}
\bigg\{1+\sup\limits_{0\leqslant t\leqslant T}\left[\big|\bar{X}_t\big|^\beta+\big|\bar{Y}_t\big|^\beta+\big|u_t\big|^\beta+\big|\bar{u}_t\big|^\beta\right]\bigg\}\\
& + C\mathbb{E}\left(\int_0^T\int_{\mathcal{E}} \big|\delta f(t,e)\big|^2I_{\mathcal{O}}(t)N(d e,d t)\right)^{\frac{\beta}{2}}\leqslant C\epsilon^{\frac{\beta}{2}}.
\end{aligned}
$$
The proof is complete.
\end{proof}

\subsection{Proof of Lemma 3.2}
\begin{proof}
Note \eqref{variational equation of X1} and \eqref{variational equation of Y1}, by Proposition \ref{Lp-estimate of fully coupled FBSDEP}, we obtain
\begin{equation}\label{proof order of X1}
\begin{aligned}
&\mathbb{E}\Bigg\{\sup\limits_{0\leqslant t\leqslant T}\left[\big|X_t^1\big|^\beta+\big|Y_t^1\big|^\beta\right]+\left(\int_0^T \big\|Z_{(t,e)}^1\big\|^2 d t\right)^{\frac{\beta}{2}}+\left(\int_0^T\int_{\mathcal{E}}\big|\tilde{Z}_{(t,e)}^1\big|^2 N(d e,d t)\right)^{\frac{\beta}{2}}\Bigg\} \\
\leqslant &\ C\mathbb{E}\left\{\left(\int_0^T\big|b_z(t)\Delta^1(t)\big|I_{[\bar{t},\bar{t}+\epsilon]}(t)d t\right)^\beta\right.\\
&\qquad +\left(\int_0^T\Big(\big|g_z(t)\Delta^1(t)\big|+\big\|q_{(t,e)}\big\|\big|\delta\sigma(t,\Delta^1)\big|\Big)I_{[\bar{t},\bar{t}+\epsilon]}(t)d t\right)^\beta \\
&\qquad +\left(\int_0^T\Big(\big|\sigma_z(t)\Delta^1(t)\big|+\big|\delta\sigma(t,\Delta^1)\big| \Big)^2 I_{[\bar{t},\bar{t}+\epsilon]}(t) d t\right)^{\frac{\beta}{2}}\Bigg\}\\
\leqslant&\ C\mathbb{E}\Bigg\{\left(\int_{\bar{t}}^{\bar{t}+\epsilon}\left(1+\big|\bar{X}_t\big|+\big|\bar{Y}_t\big|+\big|u_t\big|+\big|\bar{u}_t\big|\right)d t\right)^\beta\\
&\qquad +\left(\int_{\bar{t}}^{\bar{t}+\epsilon}\left(1+\big|\bar{X}_t\big|+\big|\bar{Y}_t\big|+\big|u_t\big|+\big|\bar{u}_t\big|\right)^2d t\right)^{\frac{\beta}{2}}\Bigg\}\\
\leqslant &\ C\big(\epsilon^\beta+\epsilon^{\frac{\beta}{2}}\big)\mathbb{E}\bigg\{1+\sup\limits_{0\leqslant t\leqslant T}\left[\big|\bar{X}_t\big|^\beta
 +\big|\bar{Y}_t\big|^\beta+\big|u_t\big|^\beta+\big|\bar{u}_t\big|^\beta\right]\bigg\}\leqslant C\epsilon^{\frac{\beta}{2}}.
\end{aligned}
\end{equation}

Recall the notations used in Lemma \ref{estimates of X,Y,Z,tilde Z}, we also denote
\begin{equation*}
\begin{array}{c}
\hat{X}_t^\epsilon\triangleq X_t^\epsilon-\bar{X}_t-X_t^1,\quad \hat{Y}_t^\epsilon\triangleq Y_t^\epsilon-\bar{Y}_t-Y_t^1, \\[2ex]
\hat{Z}_{(t,e)}^\epsilon\triangleq Z_{(t,e)}^\epsilon-\bar{Z}_{(t,e)}-Z_{(t,e)}^1, \quad \hat{\tilde{Z}}_{(t,e)}^\epsilon\triangleq \tilde{Z}_{(t,e)}^\epsilon-\bar{\tilde{Z}}_{(t,e)}-\tilde{Z}_{(t,e)}^1, \\[2ex]
{\Theta}_t^{\Delta}\triangleq \big(\bar{X}_t,\bar{Y}_t,\bar{Z}_{(t,e)}+\Delta^1(t)I_{[\bar{t},\bar{t}+\epsilon]}(t),\bar{\tilde{Z}}_{(t,e)}\big).
\end{array}
\end{equation*}
Noting (\ref{variational equation FBSDEP}) and (\ref{variational equation of X1}), we get
$$
\begin{aligned}
&\quad \sigma\left(t,\Theta_t^\epsilon,u_t^\epsilon\right)-\sigma(t)-\delta\sigma(t,\Delta^1)I_{[\bar{t},\bar{t}+\epsilon]}(t) \\
&= \sigma\left(t,\Theta_t^\epsilon,u_t^\epsilon\right)-\sigma(t)-\sigma\big(t,\bar{X}_t,\bar{Y}_t,\bar{Z}_{(t,e)}+\Delta^1(t)I_{[\bar{t},\bar{t}+\epsilon]}(t),\bar{\tilde{Z}}_{(t,e)},u_t^\epsilon\big)+\sigma(t) \\
&= \sigma\left(t,\Theta_t^\epsilon,u_t^\epsilon\right)-\sigma\left(t,{\Theta}_t^{\Delta},u_t^\epsilon\right)
 =\tilde{\sigma}_x^\epsilon(t)\left(X_t^\epsilon-\bar{X}_t\right)+\tilde{\sigma}_y^\epsilon(t)\left(Y_t^\epsilon-\bar{Y}_t\right) \\
&\quad  +\tilde{\sigma}_z^\epsilon(t)\left(\int_{\mathcal{E}}\big(Z_{(t,e)}^\epsilon-\bar{Z}_{(t,e)}\big)\nu(d e)
 -\Delta^1(t)I_{[\bar{t},\bar{t}+\epsilon]}(t)\right)+\tilde{\sigma}_{\tilde{z}}^\epsilon(t)\int_{\mathcal{E}}\big(\tilde{Z}_{(t,e)}^\epsilon-\bar{\tilde{Z}}_{(t,e)}\big)\nu(d e),
\end{aligned}
$$
where
$$
\tilde{\sigma}_x^\epsilon(t)\triangleq \int_0^1\sigma_x\left(t,{\Theta}_t^{\Delta}+\theta\left({\Theta}_t^\epsilon-{\Theta}_t^{\Delta}\right)u_t^\epsilon\right)d\theta, \\
$$
and similarly for $\tilde{\sigma}_y^\epsilon(t),\tilde{\sigma}_z^\epsilon(t),\tilde{\sigma}_{\tilde{z}}^\epsilon(t)$.

Then, we can obtain the following equation with the definition in \eqref{definition of b}:
\begin{equation}\label{variational equation of Xepsilon}
\left\{\begin{aligned}
d\hat{X}_t^\epsilon&=\left\{\tilde{b}_x(t)\hat{X}_t^\epsilon+\tilde{b}_y(t)\hat{Y}_t^\epsilon+\int_\mathcal{E}\big(\tilde{b}_z(t)\hat{Z}_{(t,e)}^\epsilon
 +\tilde{b}_{\tilde{z}}(t)\hat{\tilde{Z}}_{(t,e)}^\epsilon\big)\nu(d e)+A_1^\epsilon(t)\right\}d t \\
&\quad +\left\{\tilde{\sigma}_x^\epsilon(t)\hat{X}_t^\epsilon+\tilde{\sigma}_y^\epsilon(t)\hat{Y}_t^\epsilon+\int_\mathcal{E}\big(\tilde{\sigma}_z^\epsilon(t)\hat{Z}_{(t,e)}^\epsilon
 +\tilde{\sigma}_{\tilde{z}}^\epsilon(t)\hat{\tilde{Z}}_{(t,e)}^\epsilon\big)\nu(d e)+B_1^\epsilon(t)\right\}d W_t \\
&\quad +\int_\mathcal{E}\left\{\tilde{f}_x(t,e)\hat{X}_t^\epsilon+\tilde{f}_y(t,e)\hat{Y}_t^\epsilon+\tilde{f}_{\tilde{z}}(t,e)\hat{\tilde{Z}}_{(t,e)}^\epsilon+C_1^\epsilon(t)\right\}\tilde{N}(d e,d t), \\
\hat{X}_0^\epsilon&=0,
\end{aligned}\right.
\end{equation}
where
\begin{equation*}
\begin{aligned}
A_1^\epsilon(t)&\triangleq \big(\tilde{b}_x(t)-b_x(t)\big)X_t^1+\big(\tilde{b}_y(t)-b_y(t)\big)Y_t^1+\big(\tilde{b}_z(t)-b_z(t)\big)\int_\mathcal{E}Z_{(t,e)}^1\nu(d e) \\
&\quad +\big(\tilde{b}_{\tilde{z}}(t)-b_{\tilde{z}}(t)\big)\int_\mathcal{E}\tilde{Z}_{(t,e)}^1\nu(d e)+b_z(t)\Delta^1(t)I_{[\bar{t},\bar{t}+\epsilon]}(t)+\delta b(t)I_{[\bar{t},\bar{t}+\epsilon]}(t), \\
B_1^\epsilon(t)&\triangleq \big(\tilde{\sigma}_x^\epsilon(t)-\sigma_x(t)\big)X_t^1+\big(\tilde{\sigma}_y^\epsilon(t)-\sigma_y(t)\big)Y_t^1\\
&\quad +\big(\tilde{\sigma}_z^\epsilon(t)-\sigma_z(t)\big)\int_\mathcal{E}K_1(t,e)\nu(d e)X_t^1+\big(\tilde{\sigma}_{\tilde{z}}^\epsilon(t)-\sigma_{\tilde{z}}(t)\big)\int_\mathcal{E}K_2(t,e)\nu(d e)X_t^1, \\
C_1^\epsilon(t,e)&\triangleq \big(\tilde{f}_x(t,e)-f_x(t,e)\big)X_t^1+\big(\tilde{f}_y(t,e)-f_y(t,e)\big)Y_t^1 \\
&\quad +\big(\tilde{f}_{\tilde{z}}(t,e)-f_{\tilde{z}}(t,e)\big)K_2(t,e)X_t^1+\delta f(t,e)I_{\mathcal{O}}(t).
\end{aligned}
\end{equation*}

Similarly, we obtain the following equation by \eqref{variational equation FBSDEP} and \eqref{variational equation of Y1}:
\begin{equation}\label{variational equation of Yepsilon}
\left\{\begin{aligned}
-d\hat{Y}_t^\epsilon&=\bigg\{\tilde{g}_x(t)\hat{X}_t^\epsilon+\tilde{g}_y(t)\hat{Y}_t^\epsilon+\int_\mathcal{E}\big(\tilde{g}_z(t)\hat{Z}_{(t,e)}^\epsilon
 +\tilde{g}_{\tilde{z}}(t)\hat{\tilde{Z}}_{(t,e)}^\epsilon\big)\nu(d e)+D_1^\epsilon(t)\bigg\}d t \\
&\quad -\int_\mathcal{E}\hat{Z}_{(t,e)}^\epsilon\nu(d e)d W_t-\int_\mathcal{E}\hat{\tilde{Z}}_{(t,e)}^\epsilon\tilde{N}(d e,d t), \\
\hat{Y}_0^\epsilon&= \tilde{\phi}_x(T)\hat{X}_T^\epsilon+E_1^\epsilon(T),
\end{aligned}\right.
\end{equation}
where
\begin{equation*}
\begin{aligned}
D_1^\epsilon(t)&\triangleq \big(\tilde{g}_x(t)-g_x(t)\big)X_t^1+\big(\tilde{g}_y(t)-g_y(t)\big)Y_t^1+\big(\tilde{g}_z(t)-g_z(t)\big)\int_\mathcal{E}Z_{(t,e)}^1\nu(d e) \\
&\qquad +\big(\tilde{g}_{\tilde{z}}(t)-g_{\tilde{z}}(t)\big)\int_\mathcal{E}\tilde{Z}_{(t,e)}^1\nu(d e)+\big(g_z(t)\Delta^1(t)+\delta g(t)+q_{(t,e)}\delta\sigma(t,\Delta^1)\big)I_{[\bar{t},\bar{t}+\epsilon]}(t), \\
E_1^\epsilon(T)&\triangleq \big(\tilde{\phi}_x(T)-\phi_x(\bar{X}_T)\big)X_T^1.
\end{aligned}
\end{equation*}
For \eqref{variational equation of Xepsilon} and \eqref{variational equation of Yepsilon}, applying Proposition \ref{Lp-estimate of fully coupled FBSDEP} again, we have
\begin{equation*}
\begin{aligned}
&\quad \mathbb{E}\Bigg\{\sup\limits_{0\leqslant t\leqslant T}\left[\big|\hat{X}_t^\epsilon\big|^2+\big|\hat{Y}_t^\epsilon\big|^2\right]
 +\int_0^T \big\|\hat{Z}_{(t,e)}^\epsilon\big\|^2 d t+\int_0^T\int_{\mathcal{E}}\big|\hat{\tilde{Z}}_{(t,e)}^\epsilon\big|^2 N(d e,d t)\Bigg\}\\
&\leqslant C\mathbb{E}\Bigg\{\big|E_1^\epsilon(T)\big|^2+\left(\int_0^T\big|A_1^\epsilon(t)\big|d t\right)^2+\left(\int_0^T\big|D_1^\epsilon(t)\big|d t\right)^2+\int_0^T\big|B_1^\epsilon(t)\big|^2 d t \\
&\qquad\quad +\int_0^T\int_{\mathcal{E}}\big|C_1^\epsilon(t,e)\big|^2 N(d e,d t)\Bigg\}\triangleq C\big(I_1+I_2+I_3+I_4+I_5\big).
\end{aligned}
\end{equation*}

Next, we prove that all $I_1,I_2,I_3,I_4,I_5\sim O\left(\epsilon^2\right)$. Firstly, by Lemma \ref{estimates of X,Y,Z,tilde Z} and \eqref{proof order of X1}, we get
$$
\begin{aligned}
&I_1=\mathbb{E}\big|E_1^\epsilon(T)\big|^2=\mathbb{E}\Big[\big|\tilde{\phi}_x(T)-\phi_x(\bar{X}_T)\big|^2\big|X_T^1\big|^2\Big]\\
=&\ \mathbb{E}\bigg[\left(\int_0^1\left|\phi_x(\theta\hat{X}_T+\bar{X}_T)-\phi_x(\bar{X}_T)\right|d \theta\right)^2\big|X_T^1\big|^2\bigg]
 \leqslant C\mathbb{E}\Big[\big|\hat{X}_T\big|^2\big|X_T^1\big|^2\Big]\\
\leqslant&\ C\Bigg(\mathbb{E}\bigg[\sup\limits_{0\leqslant t\leqslant T}\big|\hat{X}_t\big|^4\bigg]\Bigg)^{\frac{1}{2}}
 \Bigg(\mathbb{E}\bigg[\sup\limits_{0\leqslant t\leqslant T}\big|X_t^1\big|^4\bigg]\Bigg)^{\frac{1}{2}}\leqslant C\epsilon^2.
\end{aligned}
$$

Note that
$$
\begin{aligned}
&\ \big|\tilde{b}_x(t)-b_x(t)\big| \\
\leqslant& \int_0^1 \left|b_x\big(t,\theta\hat{X}_t+\bar{X}_t,\theta\hat{Y}_t+\bar{Y}_t,\theta\hat{Z}_{(t,e)}+\bar{Z}_{(t,e)},\theta\hat{\tilde{Z}}_{(t,e)}
 +\bar{\tilde{Z}}_{(t,e)},u_t^\epsilon\big)-b_x(t)\right|d \theta \\
\leqslant&\ C\left(\big|\hat{X}_t\big|+\big|\hat{Y}_t\big|+\big\|\hat{Z}_{(t,e)}\big\|+\big\|\hat{\tilde{Z}}_{(t,e)}\big\|\right)+\big|\delta b_x(t)I_{[\bar{t},\bar{t}+\epsilon]}(t)\big|,
\end{aligned}
$$
so
\begin{equation}
\begin{aligned}
&\ \mathbb{E}\left(\int_0^T \big|\big(\tilde{b}_x(t)-b_x(t)\big)X_t^1\big|d t\right)^2\\
\leqslant&\ \mathbb{E}\Bigg[\sup\limits_{0\leqslant t\leqslant T}\big|X_t^1\big|^2\left(\int_0^T \big|\tilde{b}_x(t)-b_x(t)\big|d t\right)^2\Bigg] \\
\leqslant&\ C\Bigg(\mathbb{E}\bigg[\sup\limits_{0\leqslant t\leqslant T}\big|X_t^1\big|^4\bigg]\Bigg)^{\frac{1}{2}}
 \Bigg(\mathbb{E}\left(\int_0^T\big|\tilde{b}_x(t)-b_x(t)\big| d t\right)^4\Bigg)^{\frac{1}{2}}\\
\leqslant&\ C\Bigg(\mathbb{E}\bigg[\sup\limits_{0\leqslant t\leqslant T}\big|X_t^1\big|^4\bigg]\Bigg)^{\frac{1}{2}}
 \Bigg(\mathbb{E}\left(\int_0^T\big|\tilde{b}_x(t)-b_x(t)\big|^2 d t\right)^2\Bigg)^{\frac{1}{2}}\\
\leqslant&\ C\epsilon\Bigg(\mathbb{E}\left(\int_0^T\left(\big|\hat{X}_t\big|^2+\big|\hat{Y}_t\big|^2+\big\|\hat{Z}_{(t,e)}\big\|^2
 +\big\|\hat{\tilde{Z}}_{(t,e)}\big\|^2+\big|\delta b_x(t)\big|^2I_{[\bar{t},\bar{t}+\epsilon]}(t)\right) d t\right)^2\Bigg)^{\frac{1}{2}}\\
\leqslant&\ C\epsilon\Bigg(\mathbb{E}\Bigg\{\sup\limits_{0\leqslant t\leqslant T}\Big[\big|\hat{X}_t\big|^4+\big|\hat{Y}_t\big|^4\Big]+\left(\int_0^T\big\|\hat{Z}_{(t,e)}\big\|^2 d t\right)^2\\
&\qquad +\left(\int_0^T\big\|\hat{\tilde{Z}}_{(t,e)}\big\|^2 d t\right)^2+\left(\int_0^T\big|\delta b_x(t)\big|^2I_{[\bar{t},\bar{t}+\epsilon]}(t) d t\right)^2\Bigg\}\Bigg)^{\frac{1}{2}}\\
\leqslant&\ C\epsilon \Bigg(\mathbb{E}\Bigg\{\sup\limits_{0\leqslant t\leqslant T}\Big[\big|\hat{X}_t\big|^4+\big|\hat{Y}_t\big|^4\Big]+\left(\int_0^T\big\|\hat{Z}_{(t,e)}\big\|^2 d t\right)^2\\
&\qquad +\left(\int_0^T\int_{\mathcal{E}}\big|\hat{\tilde{Z}}_{(t,e)}\big|^2 N(d e,d t)\right)^2+\left(\int_0^T\big|\delta b_x(t)\big|^2I_{[\bar{t},\bar{t}+\epsilon]}(t) d t\right)^2\Bigg\}\Bigg)^{\frac{1}{2}}
\leqslant C\epsilon^2.
\end{aligned}
 \label{proof order of b}
\end{equation}
The estimates of $\mathbb{E}\left(\int_0^T \big|\big(\tilde{b}_y(t)-b_y(t)\big)Y_t^1\big|d t\right)^2$, $\mathbb{E}\left(\int_0^T \big|\big(\tilde{b}_z(t)-b_z(t)\big)Z_{(t,e)}^1\big|d t\right)^2$ and\\ $\mathbb{E}\left(\int_0^T \big|\big(\tilde{b}_{\tilde{z}}(t)-b_{\tilde{z}}(t)\big)\tilde{Z}_{(t,e)}^1\big|d t\right)^2$ are similar.

Note $\Delta^1(t)=p_t\delta\sigma\left(t,\Delta^1\right)$,
\begin{equation}
\begin{aligned}
&\ \mathbb{E}\left(\int_0^T \big|b_z(t)\Delta^1(t)I_{[\bar{t},\bar{t}+\epsilon]}(t)\big|d t\right)^2
\leqslant C\mathbb{E}\left(\int_{\bar{t}}^{\bar{t}+\epsilon}\big|\delta\sigma\left(t,\Delta^1\right)\big|d t\right)^2\\
\leqslant&\ \mathbb{E}\bigg(\int_{\bar{t}}^{\bar{t}+\epsilon}\big(1+\big|\bar{X}_t\big|+\big|\bar{Y}_t\big|+\big|u_t\big|+\big|\bar{u}_t\big|\big)d t\bigg)^2 \\
\leqslant&\ C\epsilon^2\mathbb{E}\bigg\{1+\sup\limits_{0\leqslant t\leqslant T}\left[\big|\bar{X}_t\big|^2+\big|\bar{Y}_t\big|^2+\big|u_t\big|^2+\big|\bar{u}_t\big|^2\right]\bigg\}\leqslant C\epsilon^2,
\end{aligned}
 \label{proof order of Delta}
\end{equation}
as well as
\begin{equation}
\begin{aligned}
&\ \mathbb{E}\left(\int_0^T \big|\delta b(t)I_{[\bar{t},\bar{t}+\epsilon]}(t)\big|d t\right)^2\leqslant \epsilon\mathbb{E}\int_{\bar{t}}^{\bar{t}+\epsilon} \big|\delta b(t)\big|^2 d t\\
\leqslant&\ C\epsilon \mathbb{E}\int_{\bar{t}}^{\bar{t}+\epsilon}\Big(1+\big|\bar{X}_t\big|^2+\big|\bar{Y}_t\big|^2+\big|u_t\big|^2+\big|\bar{u}_t\big|^2\Big)d \bigg\} \\
\leqslant&\ C\epsilon^2\mathbb{E}\bigg\{1+\sup\limits_{0\leqslant t\leqslant T}\left[\big|\bar{X}_t\big|^2+\big|\bar{Y}_t\big|^2+\big|u_t\big|^2+\big|\bar{u}_t\big|^2\right]\bigg\}\leqslant C\epsilon^2,
\end{aligned}
 \label{proof order of deltab}
\end{equation}
then, we obtain
$$
\begin{aligned}
I_2\leqslant&\ C\mathbb{E}\left(\int_0^T \big|\big(\tilde{b}_x(t)-b_x(t)\big)X_t^1\big|d t\right)^2 +C\mathbb{E}\left(\int_0^T \big|\big(\tilde{b}_y(t)-b_y(t)\big)Y_t^1\big|d t\right)^2 \\
& +C\mathbb{E}\left(\int_0^T \big|\tilde{b}_z(t)-b_z(t)\big|\big\|Z_{(t,e)}^1\big\|d t\right)^2
 +C\mathbb{E}\left(\int_0^T \big|\tilde{b}_{\tilde{z}}(t)-b_{\tilde{z}}(t)\big|\big\|\tilde{Z}_{(t,e)}^1\big\|d t\right)^2\\
& +C\mathbb{E}\left(\int_0^T \big|b_z(t)\Delta^1(t)I_{[\bar{t},\bar{t}+\epsilon]}(t)\big|d t\right)^2
 +C\mathbb{E}\left(\int_0^T \big|\delta b(t)I_{[\bar{t},\bar{t}+\epsilon]}(t)\big|d t\right)^2 \leqslant C\epsilon^2.
\end{aligned}
$$
Similarly for $I_3\leqslant C\epsilon^2$. Next, we only prove the fourth term in $I_4$.

Note that $K_1(t,e)$ and $K_2(t,e)$ in \eqref{explicit solution of K1K2} are bounded by $\left(p_\cdot,q_{(\cdot,\cdot)},\tilde{q}_{(\cdot,\cdot)}\right)$ and the Lipschitz constants of $\sigma$ and $f$, so
$$
\begin{aligned}
&\mathbb{E}\int_0^T \left\|\big(\tilde{\sigma}_{\tilde{z}}^\epsilon(t)-\sigma_{\tilde{z}}(t)\big)K_2(t,e)X_t^1\right\|^2 d t
\leqslant C\mathbb{E}\bigg\{\sup\limits_{0\leqslant t\leqslant T}\big|X_t^1\big|^2\int_0^T \big|\tilde{\sigma}_{\tilde{z}}^\epsilon(t)-\sigma_{\tilde{z}}(t)\big|^2 d t\bigg\} \\
\leqslant&\ C\left(\mathbb{E}\bigg[\sup\limits_{0\leqslant t\leqslant T}\big|X_t^1\big|^4\bigg]\right)^{\frac{1}{2}}
 \left(\mathbb{E}\left(\int_0^T \big|\tilde{\sigma}_{\tilde{z}}^\epsilon(t)-\sigma_{\tilde{z}}(t)\big|^2 d t\right)^2\right)^{\frac{1}{2}}.
\end{aligned}
$$

We can get $\mathbb{E}\Big[\sup_{0\leqslant t\leqslant T}\big|X_t^1\big|^4\Big]\leqslant C\epsilon^2$ by \eqref{proof order of X1} and $\mathbb{E}\left(\int_0^T \big|\tilde{\sigma}_{\tilde{z}}^\epsilon(t)-\sigma_{\tilde{z}}(t)\big|^2 d t\right)^2\leqslant C\epsilon^2$ by the similar proofs in \eqref{proof order of b}, \eqref{proof order of Delta} and \eqref{proof order of deltab}. So we have $I_4 \sim O\left(\epsilon^2\right)$. By the similar proof and the same reason in Remark 3.1, $\delta f(t)I_{\mathcal{O}}(t)$ do not influence the order of variation, so $I_5 \sim O\left(\epsilon^2\right)$ and we obtain \eqref{order of X-X-X2}.

Similarly, we have
\begin{equation*}
\begin{aligned}
& \mathbb{E}\bigg\{\sup\limits_{0\leqslant t\leqslant T}\left[\big|\hat{X}_t^\epsilon\big|^4+\big|\hat{Y}_t^\epsilon\big|^4\right]+\left(\int_0^T \big\|\hat{Z}_{(t,e)}^\epsilon\big\|^2 d t\right)^2
 +\left(\int_0^T\int_{\mathcal{E}}\big|\hat{\tilde{Z}}_{(t,e)}^\epsilon\big|^2 N(d e,d t)\right)^2\bigg\}\\
\leqslant&\ C\mathbb{E}\Bigg\{\big|E_1^\epsilon(T)\big|^4+\left(\int_0^T\big|A_1^\epsilon(t)\big|d t\right)^4+\left(\int_0^T\big|D_1^\epsilon(t)\big|d t\right)^4 \\
&\qquad +\left(\int_0^T\big|B_1^\epsilon(t)\big|^2 d t\right)^2+\left(\int_0^T\int_{\mathcal{E}}\big|C_1^\epsilon(t,e)\big|^2 N(d e,d t)\right)^2\Bigg\}= o(\epsilon^2).
\end{aligned}
\end{equation*}
The proof is complete.
\end{proof}

\subsection{Proof of Lemma 3.3}
\begin{proof}
By Proposition \ref{Lp-estimate of fully coupled FBSDEP}, we have
\begin{equation}
\begin{aligned}
&\mathbb{E}\Bigg\{\sup\limits_{0\leqslant t\leqslant T}\left[\big|X_t^2\big|^\beta+\big|Y_t^2\big|^\beta\right]+\left(\int_0^T \big\|Z_{(t,e)}^2\big\|^2 d t\right)^{\frac{\beta}{2}}
 +\left(\int_0^T\int_{\mathcal{E}}\big|\tilde{Z}_{(t,e)}^2\big|^2 N(d e,d t)\right)^{\frac{\beta}{2}}\Bigg\} \\
\leqslant&\ C\mathbb{E}\Bigg\{\left(\int_0^T\left|\delta b\left(t,\Delta^1\right)I_{[\bar{t},\bar{t}+\epsilon]}(t)+\frac{1}{2}\tilde{\Xi}_t D^2b(t)\tilde{\Xi}_t^\top\right|d t\right)^\beta\\
&\qquad +\bigg(\int_0^T\bigg\|\frac{1}{2}\tilde{\Xi}_tD^2g(t)\tilde{\Xi}_t^\top+\Big(q_{(t,e)}\delta\sigma\left(t,\Delta^1\right)+\delta g\left(t,\Delta^1\right)\Big)I_{[\bar{t},\bar{t}+\epsilon]}(t)\bigg\|d t\bigg)^\beta \\
&\qquad +\bigg(\int_0^T\Big\|\Big(\delta\sigma_x\left(t,\Delta^1\right) X_t^1 +\delta\sigma_y\left(t,\Delta^1\right)Y_t^1+\delta\sigma_z\left(t,\Delta^1\right)K_1(t,e)X_t^1 \\
&\qquad\qquad\qquad +\delta\sigma_{\tilde{z}}\left(t,\Delta^1\right)K_2(t,e)X_t^1\Big)I_{[\bar{t},\bar{t}+\epsilon]}(t)+\frac{1}{2}\tilde{\Xi}_tD^2\sigma(t)\tilde{\Xi}_t^\top\Big\|^2 d t\bigg)^{\frac{\beta}{2}} \\
&\qquad +\bigg(\int_0^T\int_{\mathcal{E}}\left|\frac{1}{2}\tilde{\Xi}_tD^2f(t,e)\tilde{\Xi}_t^\top\right|^2 N(d e,d t)\bigg)^{\frac{\beta}{2}}\Bigg\}\\
\leqslant&\ C\mathbb{E}\Bigg\{\left(\int_0^T\left(\big|\delta b\left(t,\Delta^1\right)\big|I_{[\bar{t},\bar{t}+\epsilon]}(t)+\big|X_t^1\big|^2+\big|Y_t^1\big|^2\right)d t\right)^\beta\\
&\qquad +\bigg(\int_0^T\left(\big|\delta\sigma\left(t,\Delta^1\right)+\delta g\left(t,\Delta^1\right)\big|I_{[\bar{t},\bar{t}+\epsilon]}(t)+\big|X_t^1\big|^2+\big|Y_t^1\big|^2\right) d t\bigg)^\beta \\
&\qquad +\bigg(\int_0^T \bigg(\big|X_t^1\big|^4+\big|Y_t^1\big|^4+\big(\big|X_t^1\big|^2+\big|Y_t^1\big|^2\big)I_{[\bar{t},\bar{t}+\epsilon]}(t)\bigg)d t\bigg)^{\frac{\beta}{2}} \\
&\qquad  +\left(\int_0^T\int_{\mathcal{E}}\Big(\big|X_t^1\big|^4+\big|Y_t^1\big|^4\Big) N(d e,d t)\right)^{\frac{\beta}{2}}\Bigg\}\\
\leqslant&\ C\mathbb{E}\left(\int_{\bar{t}}^{\bar{t}+\epsilon}\Big(1+\big|\bar{X}_t\big|+\big|\bar{Y}_t\big|+\big|u_t\big|+\big|\bar{u}_t\big|\Big)d t\right)^\beta
 +C\mathbb{E}\bigg[\sup\limits_{0\leqslant t\leqslant T}\Big(\big|X_t^1\big|^{2\beta}+\big|Y_t^1\big|^{2\beta}\Big)\bigg] \\
\end{aligned}
\end{equation}
\begin{equation}
\begin{aligned}
& +C\epsilon^{\frac{\beta}{2}}\mathbb{E}\bigg[\sup\limits_{0\leqslant t\leqslant T}\Big(\big|X_t^1\big|^{\beta}+\big|Y_t^1\big|^{\beta}\Big)\bigg]= O(\epsilon^\beta)=o\left(\epsilon^{\frac{\beta}{2}}\right).
\end{aligned}
\end{equation}

Now, we prove the last estimate. Recall the notations used in Lemma \ref{estimates of X,Y,Z,tilde Z} and Lemma \ref{high order estimates}, we denote
\begin{equation*}
\begin{array}{c}
\hat{X}_t^*\triangleq X_t^\epsilon-\bar{X}_t-X_t^1-X_t^2,\quad \hat{Y}_t^*\triangleq Y_t^\epsilon-\bar{Y}_t-Y_t^1-Y_t^2, \\[1ex]
\hat{Z}_{(t,e)}^*\triangleq Z_{(t,e)}^\epsilon-\bar{Z}_{(t,e)}-Z_{(t,e)}^1-Z_{(t,e)}^2, \quad \hat{\tilde{Z}}_{(t,e)}^*\triangleq \tilde{Z}_{(t,e)}^\epsilon-\bar{\tilde{Z}}_{(t,e)}-\tilde{Z}_{(t,e)}^1-\tilde{Z}_{(t,e)}^2, \\[1ex]
\check{\Xi}_t\triangleq \hat{X}_t,\hat{Y}_t,\hat{Z}_{(t,e)}-\Delta^1(t)I_{[\bar{t},\bar{t}+\epsilon]}(t),\hat{\tilde{Z}}_{(t,e)},
\end{array}
\end{equation*}
and define
$$
\widetilde{D^2b^\epsilon}(t)\triangleq 2\int_0^1\int_0^1\theta D^2b(t,\Theta_t^\Delta+\lambda\theta(\Theta_t^\epsilon-\Theta_t^\Delta),u_t^\epsilon)d \theta d\lambda,
$$
and similarly for $\widetilde{D^2\sigma^\epsilon}(t),\widetilde{D^2f^\epsilon}(t,e),\widetilde{D^2g^\epsilon}(t)$ and $\tilde{\phi}_{x x}(T)$. Then we have
\begin{equation}\label{variational equation of X*}
\left\{\begin{aligned}
d\hat{X}_t^*&=\bigg\{b_x(t)\hat{X}_t^*+b_y(t)\hat{Y}_t^*+\int_\mathcal{E}\big(b_z(t)\hat{Z}_{(t,e)}^*+b_{\tilde{z}}(t)\hat{\tilde{Z}}_{(t,e)}^*\big)\nu(d e)+A_2^\epsilon(t)\bigg\}d t \\
&\quad +\bigg\{\sigma_x(t)\hat{X}_t^*+\sigma_y(t)\hat{Y}_t^*+\int_\mathcal{E}\big(\sigma_z(t)\hat{Z}_{(t,e)}^*+\sigma_{\tilde{z}}(t)\hat{\tilde{Z}}_{(t,e)}^*\big)\nu(d e)+B_2^\epsilon(t)\bigg\}d W_t \\
&\quad +\int_\mathcal{E}\left\{f_x(t,e)\hat{X}_t^*+f_y(t,e)\hat{Y}_t^*+f_{\tilde{z}}(t,e)\hat{\tilde{Z}}_{(t,e)}^*+C_2^\epsilon(t,e)\right\}\tilde{N}(d e,d t), \\
 \hat{X}_0^*&=0,
\end{aligned}\right.
\end{equation}
where
\begin{equation*}
\begin{aligned}
A_2^\epsilon(t)\triangleq &\bigg[\delta b_x(t,\Delta^1)\hat{X}_t+\delta b_y(t,\Delta^1)\hat{Y}_t+\delta b_z(t,\Delta^1)\left(\int_\mathcal{E}\hat{Z}_{(t,e)}\nu(d e)-\Delta^1(t)I_{[\bar{t},\bar{t}+\epsilon]}(t)\right) \\
& +\delta b_{\tilde{z}}(t,\Delta^1)\int_\mathcal{E}\hat{\tilde{Z}}_{(t,e)}\nu(d e)\bigg]I_{[\bar{t},\bar{t}+\epsilon]}(t) +\frac{1}{2}\check{\Xi}_t\widetilde{D^2b^\epsilon}(t)\check{\Xi}_t^\top-\frac{1}{2}\tilde{\Xi}_t D^2b(t)\tilde{\Xi}_t^\top, \\
B_2^\epsilon(t)\triangleq &\bigg[\delta \sigma_x(t,\Delta^1)\hat{X}_t^\epsilon+\delta \sigma_y(t,\Delta^1)\hat{Y}_t^\epsilon+\delta \sigma_z(t,\Delta^1)\int_\mathcal{E}\hat{Z}_{(t,e)}^\epsilon\nu(d e) \\
& +\delta \sigma_{\tilde{z}}(t,\Delta^1)\int_\mathcal{E}\hat{\tilde{Z}}_{(t,e)}^\epsilon\nu(d e)\bigg]I_{[\bar{t},\bar{t}+\epsilon]}(t) +\frac{1}{2}\check{\Xi}_t\widetilde{D^2\sigma^\epsilon}(t)\check{\Xi}_t^\top-\frac{1}{2}\tilde{\Xi}_t D^2\sigma(t)\tilde{\Xi}_t^\top, \\
C_2^\epsilon(t,e)\triangleq &\bigg[\delta f_x(t,\Delta^1,e)\hat{X}_t+\delta f_y(t,\Delta^1,e)\hat{Y}_t+\delta f_{\tilde{z}}(t,\Delta^1,e)\hat{\tilde{Z}}_{(t,e)}\bigg]I_{\mathcal{O}}(t)  \\
& +\frac{1}{2}\check{\Xi}_t\widetilde{D^2f^\epsilon}(t,e)\check{\Xi}_t^\top-\frac{1}{2}\tilde{\Xi}_t D^2f(t,e)\tilde{\Xi}_t^\top, \\
\end{aligned}
\end{equation*}
and
\begin{equation}\label{variational equation of Y*}
\left\{\begin{aligned}
-d\hat{Y}_t^*&=\bigg\{g_x(t)\hat{X}_t^*+g_y(t)\hat{Y}_t^*+\int_\mathcal{E}\big(g_z(t)\hat{Z}_{(t,e)}^*+g_{\tilde{z}}(t)\hat{\tilde{Z}}_{(t,e)}^*\big)\nu(d e)+D_2^\epsilon(t)\bigg\}d t \\
&\quad -\int_\mathcal{E}\hat{Z}_{(t,e)}^*\nu(d e)d W_t-\int_\mathcal{E}\hat{\tilde{Z}}_{(t,e)}^*\tilde{N}(d e,d t), \\
\hat{Y}_T^*&= {\phi}_x(\bar{X}_T)\hat{X}_T^*+E_2^\epsilon(T),
\end{aligned}\right.
\end{equation}
where
\begin{equation*}
\begin{aligned}
D_2^\epsilon(t)&\triangleq \bigg[\delta g_x(t,\Delta^1)\hat{X}_t+\delta g_y(t,\Delta^1)\hat{Y}_t+\delta g_z(t,\Delta^1)\left(\int_\mathcal{E}\hat{Z}_{(t,e)}\nu(d e)-\Delta^1(t)I_{[\bar{t},\bar{t}+\epsilon]}(t)\right) \\
&\qquad +\delta g_{\tilde{z}}(t,\Delta^1)\int_\mathcal{E}\hat{\tilde{Z}}_{(t,e)}\nu(d e)\bigg]I_{[\bar{t},\bar{t}+\epsilon]}(t)
 +\frac{1}{2}\check{\Xi}_t\widetilde{D^2g^\epsilon}(t)\check{\Xi}_t^\top-\frac{1}{2}\tilde{\Xi}_t D^2g(t)\tilde{\Xi}_t^\top, \\
E_2^\epsilon(T)&\triangleq \frac{1}{2}\tilde{\phi}_{x x}(T)(\hat{X}_T)^2-\frac{1}{2}\phi_{x x}(\bar{X}_T)(X_T^1)^2. \\
\end{aligned}
\end{equation*}

We introduce the following fully coupled FBSDEP:
\begin{equation}
\left\{\begin{aligned}
dh_t&=\left\{g_y(t)h_t+b_y(t)m_t+\int_\mathcal{E}\big(\sigma_y(t)n_{(t,e)}+f_y(t,e)\tilde{n}_{(t,e)}\big)\nu(d e)\right\}d t \\
&\quad +\left\{g_z(t)h_t+b_z(t)m_t+\int_\mathcal{E}\sigma_z(t)n_{(t,e)}\nu(d e)\right\}d W_t \\
&\quad +\int_\mathcal{E}\Big\{g_{\tilde{z}}(t)h_t+b_{\tilde{z}}(t)m_t+\sigma_{\tilde{z}}(t)n_{(t,e)}+f_{\tilde{z}}(t,e)\tilde{n}_{(t,e)}\Big\}\tilde{N}(d e,d t), \\
-dm_t&=\left\{g_x(t)h_t+b_x(t)m_t+\int_\mathcal{E}\big(\sigma_x(t)n_{(t,e)}+f_x(t,e)\tilde{n}_{(t,e)}\big)\nu(d e)\right\}d t \\
&\quad -\int_\mathcal{E}n_{(t,e)}\nu(d e)d W_t -\int_\mathcal{E}\tilde{n}_{(t,e)}\tilde{N}(d e,d t), \\
h_0&=1, \quad m_T=\phi_x(\bar{X}_T)h_T,
\end{aligned}\right.
\end{equation}
which has a unique solution $\big(h_\cdot,m_\cdot,n_{(\cdot,\cdot)},\tilde{n}_{(\cdot,\cdot)}\big)$ by Proposition \ref{Lp-estimate of fully coupled FBSDEP}. Applying It\^{o}'s formula to $m_t\hat{X}_t^*-h_t\hat{Y}_t^*$, we get
\begin{equation}
\begin{aligned}
\big|\hat{Y}_0^*\big|&= \left|\mathbb{E}\bigg\{h_T E_2^\epsilon(T)+\int_0^T \bigg(m_t A_2^\epsilon(t)+h_tD_2^\epsilon(t)+\int_\mathcal{E}\big(n_{(t,e)}B_2^\epsilon(t)
 +\tilde{n}_{(t,e)}C_2^\epsilon(t,e)\big)\nu(d e)\bigg)d t\bigg\}\right| \\
&\leqslant \mathbb{E}\big|h_T E_2^\epsilon(T)\big|+\mathbb{E}\int_0^T\big|m_t A_2^\epsilon(t)\big|d t+\mathbb{E}\int_0^T\big\|n_{(t,e)}B_2^\epsilon(t)\big\|d t\\
&\quad +\mathbb{E}\int_0^T\big\|\tilde{n}_{(t,e)}C_2^\epsilon(t,e)\big\|d t+\mathbb{E}\int_0^T\big|h_tD_2^\epsilon(t)\big|d t\triangleq I_6+I_7+I_8+I_9+I_{10}.
\end{aligned}
 \label{estimate with Y0}
\end{equation}

Next, we prove that all $I_6,I_7,I_8,I_9,I_{10}\sim o\left(\epsilon\right)$. Firstly, by Proposition \ref{Lp-estimate of fully coupled FBSDEP}, we have
$$
\mathbb{E}\Bigg\{\sup\limits_{0\leqslant t\leqslant T}\left[\big|h_t\big|^2+\big|m_t\big|^2\right]+\int_0^T\big\|n_{(t,e)}\big\|^2 d t+\int_0^T\int_{\mathcal{E}}\big|\tilde{n}_{(t,e)}\big|^2N(d e,d t)\Bigg\}\leqslant C.
$$
Note that
$$
\begin{aligned}
&\ E_2^\epsilon(T)=\frac{1}{2}\tilde{\phi}_{x x}(T)(\hat{X}_T)^2-\frac{1}{2}\phi_{x x}(\bar{X}_T)(X_T^1)^2 \\
\leqslant&\ C\left[\big(\tilde{\phi}_{x x}(T)-\phi_{x x}(\bar{X}_T)\big)(\hat{X}_T)^2+\phi_{x x}(\bar{X}_T)\big(\hat{X}_T-X_T^1\big)\big(\hat{X}_T+X_T^1\big)\right],
\end{aligned}
$$
and
$$
\begin{aligned}
&\ I_6=\mathbb{E}\big|h_T E_2^\epsilon(T)\big|\leqslant\left(\mathbb{E}\big|h_T\big|^2\right)^{\frac{1}{2}}\left(\mathbb{E}\big|E_2^\epsilon(T)\big|^2\right)^{\frac{1}{2}} \\
\leqslant&\ C\left(\mathbb{E}\bigg[\sup\limits_{0\leqslant t\leqslant T}\big|h_t\big|^2\bigg]\right)^{\frac{1}{2}}
 \left(\mathbb{E}\Big[\big|\tilde{\phi}_{x x}(T)-\phi_{x x}(\bar{X}_T)\big|^2\big|\hat{X}_T\big|^4\Big]
 +\mathbb{E}\Big[\big|\hat{X}_T^\epsilon\big|^2\big|\hat{X}_T+X_T^1\big|^2\Big]\right)^{\frac{1}{2}} \\
\leqslant&\ C\left(\mathbb{E}\Big[\big|\tilde{\phi}_{x x}(T)-\phi_{x x}(\bar{X}_T)\big|^2\big|\hat{X}_T\big|^4\Big]
 +\mathbb{E}\Big(\big|\hat{X}_T^\epsilon\big|^4\Big)^{\frac{1}{2}}\left[\mathbb{E}\Big(\big|\hat{X}_T\big|^4\Big)^{\frac{1}{2}}+\mathbb{E}\Big(\big|{X}_T^1\big|^4\Big)^{\frac{1}{2}}\right]\right)^{\frac{1}{2}}.
\end{aligned}
$$
So, we get $I_6\sim o(\epsilon)$. Since
$$
\begin{aligned}
I_7=&\ \mathbb{E}\int_0^T\big|m_t A_2^\epsilon(t)\big|d t\leqslant\mathbb{E}\bigg\{\sup\limits_{0\leqslant t\leqslant T}\big|m_t\big|\int_0^T\big| A_2^\epsilon(t)\big|d t\bigg\} \\
\leqslant&\ \left(\mathbb{E}\bigg[\sup\limits_{0\leqslant t\leqslant T}\big|m_t\big|^2\bigg]\right)^{\frac{1}{2}}\left(\mathbb{E}\left(\int_0^T\big| A_2^\epsilon(t)\big|d t\right)^2\right)^{\frac{1}{2}},
\end{aligned}
$$
we only need to prove $\mathbb{E}\int_0^T\big| A_2^\epsilon(t)\big|^2d t\sim o(\epsilon^2)$. Note that
$$
\begin{aligned}
&\ \mathbb{E}\left(\int_0^T\left|\delta b_z(t,\Delta^1)\left(\int_{\mathcal{E}}\hat{Z}_{(t,e)}\nu(d e)-\Delta^1(t)I_{[\bar{t},\bar{t}+\epsilon]}(t)\right)\right|I_{[\bar{t},\bar{t}+\epsilon]}(t)d t\right)^2 \\
\leqslant&\ \mathbb{E}\left(\int_{\bar{t}}^{\bar{t}+\epsilon}\big|\delta b_z(t,\Delta^1)\big|\left(\big\|\hat{Z}_{(t,e)}^\epsilon\big\|+\big\|K_1(t,e)X_t^1\big\|\right)d t\right)^2 \\
\leqslant&\ C\mathbb{E}\left(\int_{\bar{t}}^{\bar{t}+\epsilon}\big\|\hat{Z}_{(t,e)}^\epsilon\big\|d t\right)^2
 +C\mathbb{E}\left[\sup\limits_{0\leqslant t\leqslant T}\big|X_t^1\big|^2\left(\int_{\bar{t}}^{\bar{t}+\epsilon}\big|\delta b_z(t,\Delta^1)\big|d t\right)^2\right] \\
\leqslant&\ C\epsilon\mathbb{E}\int_0^T\big\|\hat{Z}_{(t,e)}^\epsilon\big\|^2d t +C\epsilon^2\mathbb{E}\left[\sup\limits_{0\leqslant t\leqslant T}\big|X_t^1\big|^2\right]=o(\epsilon^2),
\end{aligned}
$$
and
\begin{equation*}
\begin{aligned}
&\ \mathbb{E}\left(\int_0^T \Big\|\tilde{b}_{z z}^\epsilon(t)\big(\hat{Z}_{(t,e)}-\Delta^1(t)I_{[\bar{t},\bar{t}+\epsilon]}(t)\big)^2-b_{z z}(t)K_1(t,e)^2(X_t^1)^2\Big\| d t\right)^2 \\
\leqslant&\ C\mathbb{E}\left(\int_0^T \Big\|\tilde{b}_{z z}^\epsilon(t)\hat{Z}_{(t,e)}^\epsilon\big(\hat{Z}_{(t,e)}-\Delta^1(t)I_{[\bar{t},\bar{t}+\epsilon]}(t)+K_1(t,e)X_t^1\big)\Big\|d t\right)^2 \\
& +C\mathbb{E}\left(\int_0^T \Big\|\big(\tilde{b}_{z z}^\epsilon(t)-b_{z z}(t)\big)K_1(t,e)^2(X_t^1)^2\Big\|d t\right)^2 \\
\leqslant&\ C\mathbb{E}\left[\int_0^T \big\|\hat{Z}_{(t,e)}^\epsilon\big\|^2d t\int_0^T \Big\|\hat{Z}_{(t,e)}-\Delta^1(t)I_{[\bar{t},\bar{t}+\epsilon]}(t)+K_1(t,e)X_t^1\Big\|^2d t\right] \\
& +C\mathbb{E}\left[\sup\limits_{0\leqslant t\leqslant T}\big|X_t^1\big|^4\left(\int_0^T \big|\tilde{b}_{z z}^\epsilon(t)-b_{z z}(t)\big|d t\right)^2\right] = o(\epsilon^2).
\end{aligned}
\end{equation*}
Similarly for other estimates in $\mathbb{E}\int_0^T\big| A_2^\epsilon(t)\big|^2d t$, and we obtain $I_7\sim o(\epsilon)$.

For $I_8$, we have
\begin{equation} \label{problem estimate}
\begin{aligned}
&\ \mathbb{E}\int_0^T \Big\|n_{(t,e)}\delta \sigma_{\tilde{z}}(t,\Delta^1)\hat{\tilde{Z}}_{(t,e)}^\epsilon I_{[\bar{t},\bar{t}+\epsilon]}(t)\Big\|d t
\leqslant C\mathbb{E}\int_{\bar{t}}^{\bar{t}+\epsilon} \big\|n_{(t,e)}\hat{\tilde{Z}}_{(t,e)}^\epsilon\big\| d t \\
\leqslant&\ C\left(\mathbb{E}\int_{\bar{t}}^{\bar{t}+\epsilon} \big\|n_{(t,e)}\big\|^2d t\right)^{\frac{1}{2}}
 \left(\mathbb{E}\int_{\bar{t}}^{\bar{t}+\epsilon} \big\|\hat{\tilde{Z}}_{(t,e)}^\epsilon \big\|^2d t\right)^{\frac{1}{2}} \\
\leqslant&\ C\left(\mathbb{E}\int_{\bar{t}}^{\bar{t}+\epsilon} \big\|n_{(t,e)}\big\|^2d t\right)^{\frac{1}{2}}
 \left(\mathbb{E}\int_0^T \big\|\hat{\tilde{Z}}_{(t,e)}^\epsilon \big\|^2d t\right)^{\frac{1}{2}} \\
\leqslant&\ C\left(\mathbb{E}\int_{\bar{t}}^{\bar{t}+\epsilon} \big\|n_{(t,e)}\big\|^2d t\right)^{\frac{1}{2}}
 \left(\mathbb{E}\int_0^T \int_{\mathcal{E}}\big|\hat{\tilde{Z}}_{(t,e)}^\epsilon \big|^2 N(d e,d t)\right)^{\frac{1}{2}}=o(\epsilon).
\end{aligned}
\end{equation}
Then noting that
$$
\begin{aligned}
&\mathbb{E}\int_0^T\big\|n_{(t,e)}\big\|\left\|\tilde{\sigma}_{\tilde{z} \tilde{z}}^\epsilon(t)\big(\hat{\tilde{Z}}_{(t,e)}\big)^2-\sigma_{\tilde{z} \tilde{z}}(t) K_2(t,e)^2 (X_t^1)^2\right\| d t  \\
\leqslant &\ \mathbb{E}\int_0^T\left\|n_{(t,e)}\tilde{\sigma}_{\tilde{z} \tilde{z}}^\epsilon(t)\big(\hat{\tilde{Z}}_{(t,e)}+K_2(t,e) X_t^1\big) \hat{\tilde{Z}}_{(t,e)}^\epsilon\right\| d t \\
& +\mathbb{E}\int_0^T\Big\|n_{(t,e)}\big(\tilde{\sigma}_{\tilde{z} \tilde{z}}^\epsilon(t)-\sigma_{\tilde{z} \tilde{z}}(t)\big) K_2(t,e)^2 (X_t^1)^2 \Big\|d t \\
\leq &\ \mathbb{E}\int_0^T\Big\|n_{(t,e)}\tilde{\sigma}_{\tilde{z} \tilde{z}}^\epsilon(t)\hat{\tilde{Z}}_{(t,e)} \hat{\tilde{Z}}_{(t,e)}^\epsilon \Big\|d t
+\mathbb{E}\int_0^T\Big\|n_{(t,e)}\tilde{\sigma}_{\tilde{z} \tilde{z}}^\epsilon(t) K_2(t,e) X_t^1 \hat{\tilde{Z}}_{(t,e)}^\epsilon \Big\|d t+o(\epsilon) \\
\leqslant &\ \mathbb{E}\int_0^T\Big\|n_{(t,e)}\tilde{\sigma}_{\tilde{z} \tilde{z}}^\epsilon(t)\hat{\tilde{Z}}_{(t,e)} \hat{\tilde{Z}}_{(t,e)}^\epsilon \Big\|d t
+C \mathbb{E}\left[\sup _{t \in[0, T]}\left|X_t^1\right| \int_0^T\left\|n_{(t,e)}\hat{\tilde{Z}}_{(t,e)}^\epsilon\right\| d t\right]+o(\epsilon) \\
\leqslant &\ \mathbb{E}\int_0^T\Big\|n_{(t,e)}\tilde{\sigma}_{\tilde{z} \tilde{z}}^\epsilon(t)\hat{\tilde{Z}}_{(t,e)} \hat{\tilde{Z}}_{(t,e)}^\epsilon \Big\|d t+o(\epsilon),\\
\end{aligned}
$$
as well as
\begin{equation*}
\begin{aligned}
&\mathbb{E}\int_0^T\big\|n_{(t,e)}\big\|\left\|\tilde{\sigma}_{\tilde{z} z}^\epsilon(t)\big(\hat{Z}_{(t,e)}-\Delta^1(t)I_{[\bar{t},\bar{t}+\epsilon]}(t)\big)
 \hat{\tilde{Z}}_{(t,e)}-\sigma_{\tilde{z} z}(t) K_1(t,e) K_2(t,e)(X_t^1)^2\right\| d t \\
\leqslant &\ \mathbb{E}\int_0^T\big\|n_{(t,e)}\big\|\left\|\tilde{\sigma}_{\tilde{z} z}^\epsilon(t)\big(\hat{Z}_{(t,e)}^\epsilon +K_1(t,e)X_t^1\big)
 \hat{\tilde{Z}}_{(t,e)}-\sigma_{\tilde{z} z}(t) K_1(t,e) K_2(t,e)(X_t^1)^2\right\| d t\\
\leqslant &\ \mathbb{E}\int_0^T\Big\|n_{(t,e)}\tilde{\sigma}_{\tilde{z} z}^\epsilon(t)\big(\hat{Z}_{(t,e)}^\epsilon +K_1(t,e)X_t^1\big)\hat{\tilde{Z}}_{(t,e)}^\epsilon \Big\|d t \\
&\ +\mathbb{E}\int_0^T\big\|n_{(t,e)}\big\| \Big\|\tilde{\sigma}_{\tilde{z} z}^\epsilon(t)\big(\hat{Z}_{(t,e)}^\epsilon +K_1(t,e)X_t^1\big)K_2(t,e)X_t^1
 -\sigma_{\tilde{z} z}(t) K_1(t,e) K_2(t,e)(X_t^1)^2 \Big\|d t\\
\leqslant &\ \mathbb{E}\int_0^T\Big\|n_{(t,e)}\tilde{\sigma}_{\tilde{z} z}^\epsilon(t)\big(\hat{Z}_{(t,e)}^\epsilon +K_1(t,e)X_t^1\big)\hat{\tilde{Z}}_{(t,e)}^\epsilon \Big\|d t
 +C \mathbb{E}\left[\sup _{t \in[0, T]}\left|X_t^1\right| \int_0^T\big\|n_{(t,e)}\hat{Z}_{(t,e)}^\epsilon\big\| d t\right] \\
& +\mathbb{E}\int_0^T\Big\|n_{(t,e)}\big(\tilde{\sigma}_{\tilde{z} z}^\epsilon(t)-\sigma_{\tilde{z} z}(t)\big) K_1(t,e) K_2(t,e) (X_t^1)^2 \Big\|d t \\
\leqslant &\ \mathbb{E}\int_0^T\Big\|n_{(t,e)}\tilde{\sigma}_{\tilde{z} z}^\epsilon(t)\big(\hat{Z}_{(t,e)}^\epsilon +K_1(t,e)X_t^1\big)\hat{\tilde{Z}}_{(t,e)}^\epsilon \Big\|d t+o(\epsilon),
\end{aligned}
\end{equation*}
we can obtain
$$
\begin{aligned}
&\mathbb{E} \int_0^T\big\|n_{(t,e)}\big\|\left\|\tilde{\sigma}_{\tilde{z} \tilde{z}}^\epsilon(t)\big(\hat{\tilde{Z}}_{(t,e)}\big)^2-\sigma_{\tilde{z} \tilde{z}}(t) K_2(t,e)^2 (X_t^1)^2\right\| d t \\
&+\mathbb{E} \int_0^T\big\|n_{(t,e)}\big\|\left\|\tilde{\sigma}_{\tilde{z} z}^\epsilon(t)\big(\hat{Z}_{(t,e)}-\Delta^1(t)I_{[\bar{t},\bar{t}+\epsilon]}(t)\big)\hat{\tilde{Z}}_{(t,e)}
 -\sigma_{\tilde{z} z}(t) K_1(t,e) K_2(t,e)(X_t^1)^2\right\| d t \\
\leqslant&\ \mathbb{E}\int_0^T\Big\|n_{(t,e)}\tilde{\sigma}_{\tilde{z} \tilde{z}}^\epsilon(t)\hat{\tilde{Z}}_{(t,e)} \hat{\tilde{Z}}_{(t,e)}^\epsilon
 +n_{(t,e)}\tilde{\sigma}_{\tilde{z} z}^\epsilon(t)\big(\hat{Z}_{(t,e)}^\epsilon +K_1(t,e)X_t^1\big)\hat{\tilde{Z}}_{(t,e)}^\epsilon \Big\|d t +o(\epsilon) \\
\leqslant&\ \mathbb{E}\int_0^T \big\|n_{(t,e)}\big\|\left\|2\int_0^1\big(\sigma_{\tilde{z}}(t,\Theta_t^\Delta+\theta\left(\Theta_t^\epsilon-\Theta_t^\Delta\right),u_t^\epsilon,e)
 -\sigma_{\tilde{z}}(t,\Theta_t^\Delta ,u_t^\epsilon,e)\big)d \theta \right\|\big\|\hat{\tilde{Z}}_{(t,e)}^\epsilon\big\| d t \\
&\ +\mathbb{E}\int_0^T \big\|n_{(t,e)}\big\|\big\|\tilde{\sigma}_{\tilde{z} x}^\epsilon(t)\hat{X}_{(t,e)}
 +\tilde{\sigma}_{\tilde{z} y}^\epsilon(t)\hat{Y}_{(t,e)}\big\|\big\|\hat{\tilde{Z}}_{(t,e)}^\epsilon\big\| d t +o(\epsilon)= o(\epsilon).
\end{aligned}
$$
By similar proofs in \cite{HuJiXue2018}, we can get the other estimates, so we have $I_8\sim o(\epsilon)$. The estimate of $I_9$ is similar to $I_8$ and $I_{1 0}$ is the same as $I_7$, therefore all the estimates in \eqref{estimate with Y0} have been derived. Finally, we obtain
\begin{equation*}
Y_0^\epsilon-\bar{Y}_0-Y_0^1-Y_0^2=o(\epsilon).
\end{equation*}
The proof is complete.
\end{proof}

\end{document}